\def\@captype{figure}
\patchcmd{\maketitle}{\@fnsymbol}{\@alph}{}{}  
\newtheorem{theorem}{Theorem}[section]
\newtheorem{lemma}[theorem]{Lemma}
\newtheorem{proposition}[theorem]{Proposition}
\newtheorem{corollary}[theorem]{Corollary}
\newtheorem{remark}[theorem]{Remark}
\newtheorem{definition}[theorem]{Definition}
\numberwithin{equation}{section}
\newcommand{\R}{\mathbb{R}}
\newcommand{\N}{\mathbb{N}}
\newcommand{\PP}{\mathbb{P}}
\newcommand{\cG}{\mathcal{G}}
\newcommand{\vC}{\vec{C}}
\newcommand{\vt}{\vec{\theta}}
\newcommand{\Pmic}{\mathrm{P}_{\mathrm{mic}}}
\newcommand{\Pcan}{\mathrm{P}_{\mathrm{can}}}
\newcommand{\dd}{\mathrm{d}}
\newcommand{\eee}{\mathrm{e}}
\newcommand{\be}{\begin{equation}}
\newcommand{\ee}{\end{equation}}
\newcommand{\limn}{\lim_{n\to\infty}}
\begin{document}

\title{Ensemble equivalence for dense graphs}

\author{
   F.~den Hollander\thanks{Mathematical Institute, Leiden University, P.O. Box 9512, 2300, RA Leiden, The Netherlands}
   \and
  M.~Mandjes\thanks{Korteweg-de Vries Institute, University of Amsterdam, P.O. Box 94248, 1090 GE Amsterdam, The Netherlands}
\and
A.~Roccaverde$^{\rm{a}}$
\and
  N.~J.~Starrreveld$^{\rm{b}}$
} 

\maketitle


\begin{abstract}
In this paper we consider a random graph on which topological restrictions are imposed, 
such as constraints on the total number of edges, wedges, and triangles. We work in 
the dense regime, in which the number of edges per vertex scales proportionally to the 
number of vertices $n$. Our goal is to compare the micro-canonical ensemble (in which 
the constraints are satisfied for every realisation of the graph) with the canonical ensemble 
(in which the constraints are satisfied on average), both subject to maximal entropy. We 
compute the relative entropy of the two ensembles in the limit as $n$ grows large, where 
two ensembles are said to be \emph{equivalent} in the dense regime if this relative entropy 
divided by $n^2$ tends to zero. Our main result, whose proof relies on large deviation theory 
for graphons, is that breaking of ensemble equivalence occurs when the constraints are \emph{frustrated}. Examples are provided for three different choices of 
constraints.

\end{abstract}

\section{Introduction}
\label{S1}

Section~\ref{S1.1} gives background and motivation, Section~\ref{S1.1alt} describes 
relevant literature, while Section~\ref{S1.2} outlines the remainder of the paper. 

\subsection{Background and motivation}
\label{S1.1}

For large networks a detailed description of the architecture of the network is infeasible 
and must be replaced by a \emph{probabilistic} description, where the network is assumed 
to be a random sample drawn from a set of allowed graphs that are consistent with a set 
of empirically observed features of the network, referred to as \emph{constraints}. 
Statistical physics deals with the definition of the appropriate probability distribution 
over the set of graphs and with the calculation of its relevant properties (Gibbs~\cite{G02}). 
The two main choices\footnote{The microcanonical ensemble and the canonical ensemble 
work with a fixed number of vertices. There is a third ensemble, the \emph{grandcanonical 
ensemble}, where also the size of the graph is considered as a soft constraint.} of probability 
distribution are: 
\begin{itemize}
\item[(1)] 
The \emph{microcanonical ensemble}, where the constraints are \emph{hard} (i.e., are 
satisfied by each individual graph).
\item[(2)] 
The \emph{canonical ensemble}, where the constraints are \emph{soft} (i.e., hold as 
ensemble averages, while individual graphs may violate the constraints).
\end{itemize}

For networks that are large but finite, the two ensembles are obviously different and, 
in fact, represent different empirical situations: they serve as \emph{null-models} for 
the network after incorporating what is known about the network \emph{a priori} via 
the constraints. Each ensemble represents the unique probability distribution with 
\emph{maximal entropy} respecting the constraints. In the limit as the size of the graph 
diverges, the two ensembles are traditionally \emph{assumed} to become equivalent 
as a result of the expected vanishing of the fluctuations of the soft constraints, i.e., the 
soft constraints are expected to become asymptotically hard. This assumption of 
\emph{ensemble equivalence}, which is one of the corner stones of statistical physics, 
does however \emph{not} hold in general (we refer to Touchette~\cite{T14} for more 
background). 

In Squartini \emph{et al.}~\cite{SdMdHG15} the question of the possible breaking of ensemble 
equivalence was investigated for two types of constraint:
\begin{itemize}
\item[(I)] 
The total number of edges. 
\item[(II)] 
The degree sequence.
\end{itemize} 
In the \emph{sparse regime}, where the empirical degree distribution converges to a limit 
as the number of vertices $n$ tends to infinity such that the maximal degree is $o(\sqrt{n})$, 
it was shown that the relative entropy of the micro-canonical ensemble w.r.t.\ the canonical 
ensemble divided by $n$ (which can be interpreted as the relative entropy per vertex) tends 
to $s_\infty$, with $s_\infty=0$ in case the constraint concerns the total number of edges, and 
$s_\infty>0$ in case the constraint concerns the degree sequence. For the latter case, an 
explicit formula was derived for $s_\infty$, which allows for a \emph{quantitative analysis} of 
the breaking of ensemble equivalence. 

In the present paper we analyse what happens in the \emph{dense regime}, where the number 
of edges per vertex is of order $n$. We consider case (I), yet allow for constraints not only 
on the total number of edges but also on the total number of wedges, triangles, etc. We show
that the relative entropy divided by $n^2$ (which, up to a constant, can be interpreted as the 
relative entropy per edge) tends to $s_\infty$, with $s_\infty>0$ when the constraints are \emph{frustrated}. Our analysis is based on a large deviation principle for 
graphons. 


\subsection{Relevant literature}
\label{S1.1alt}

In the past few years, several papers have studied the microcanonical ensemble and the 
canonical ensemble. Most papers focus on dense graphs, but there are some interesting 
advances for sparse graphs as well. Closely related to the canonical ensemble are the 
\emph{exponential random graph model} (Bhamidi {\it et al.}~\cite{BBS11}, Chatterjee 
and Diaconis~\cite{CD13}) and the \emph{constrained exponential random model}  (Aristoff 
and Zhu~\cite{AZ11}, Kenyon and Yin~\cite{KY17}, Yin~\cite{Y15}, Zhu~\cite{Z17}). 

In Aristoff and Zhu~\cite{AZ11}, Kenyon {\it et al.}~\cite{KRRS17}, Radin and Sadun~\cite{RS15}, 
the authors study the microcanonical ensemble, focusing on the constrained entropy density. 
In \cite{AZ11} directed graphs are considered with a \emph{hard} constraint on the number 
of directed edges and $j$-stars, while in \cite{KRRS17,RS15} the focus is on undirected 
graphs with a \emph{hard} constraint on the edge density, $j$-star density
and triangle density, respectively. Following the work in Bhamidi {\it et al.}~\cite{BBS11}
and in Chatterjee and Diaconis~\cite{CD13}, a deeper understanding has developed of how these 
models behave as the size of the graph tends to infinity. Most results concern the asymptotic 
behaviour of the partition function (Chatterjee and Diaconis~\cite{CD13}, Kenyon, Radin, Ren 
and Sadun~\cite{KRRS17}) and the identification of regions where phase transitions 
occur (Aristoff and Zhu~\cite{AZpr}, Lubetsky and Zhao~\cite{LZ15}, Yin~\cite{Y13}). For more 
details we refer the reader to the recent monograph by Chatterjee~\cite{C15}, and references 
therein. Significant contributions for sparse graphs were made in Chatterjee and 
Dembo~\cite{CD16} and in subsequent work of Yin and Zhu~\cite{YZ17}. 

For an overview on random graphs and their role as models of complex networks, we refer the 
reader to the recent monograph by van der Hofstad~\cite{vdH17}. The most important distinction 
between  our paper and the existing literature on exponential random graphs is that in the 
canonical ensemble we impose a \emph{soft} constraint.


\subsection{Outline}
\label{S1.2}

The remainder of this paper is organised as follows. Section~\ref{S1alt} defines the two ensembles, 
gives the definition of equivalence of ensembles in the dense regime, recalls some basic facts 
about graphons, and states the large deviation principle for the Erd\H{o}s-R\'enyi random graph. 
Section~\ref{S2} states a key theorem in which we give a \emph{variational representation} of 
$s_\infty$ when the constraint is on \emph{subgraph counts}, properly normalised. Section~\ref{S3} 
presents our main theorem for ensemble equivalence, which provides three examples for which 
\emph{breaking of ensemble equivalence} occurs when the constraints are \emph{frustrated}. In 
particular, the constraints considered are on the number of edges, triangles and/or stars. Frustration corresponds to the situation where the canonical ensemble scales like an Erd\H{o}s-R\'enyi 
random graph model with an appropriate edge density but the microcanonical ensemble does not. 
The proof of the main theorem is given in Sections~\ref{S4}--\ref{S5}, and relies on various papers 
in the literature dealing with exponential random graph models. Appendix~\ref{app} discusses 
convergence of Lagrange multipliers associated with the canonical ensemble. 


\section{Key notions}
\label{S1alt}

In Section~\ref{S1.3} we introduce the model and give our definition of equivalence of ensembles 
in the dense regime (Definition~\ref{def:ensequiv} below). In Section~\ref{S1.4} we recall some 
basic facts about graphons (Propositions~\ref{QSC}--\ref{Lip} below). In Section~\ref{S1.5} we recall 
the large deviation principle for the Erd\H{o}s-R\'enyi random graph (Proposition~\ref{RFprop} and 
Theorem~\ref{th:LDP} below), which is the key tool in our paper. 

\subsection{Microcanonical ensemble, canonical ensemble, relative entropy}
\label{S1.3}

For $n \in \N$, let $\cG_n$ denote the set of all $2^{{n\choose 2}}$ simple undirected graphs with 
$n$ vertices. Any graph $G\in\cG_n$ can be represented by a symmetric $n \times n$ matrix 
with elements 
\be
h^G(i,j) :=
\begin{cases}
1\qquad \mbox{if there is an edge between vertex } i \mbox{ and vertex } j,\\ 
0 \qquad \mbox{otherwise.}
\end{cases}
\ee
Let $\vC$ denote a vector-valued function on $\cG_n$. We choose a specific vector $\vC^*$,
which we assume to be \emph{graphic}, i.e., realisable by at least one graph in $\cG_n$.
For this  $\vC^*$ the \emph{microcanonical ensemble} is the probability distribution $\Pmic$ 
on $\cG_n$ with \emph{hard constraint} $\vC^*$ defined as
\begin{equation}
\Pmic(G) :=
\left\{
\begin{array}{ll} 
1/\Omega_{\vC^*}, \quad & \text{if } \vC(G) = \vC^*, \\ 
0, & \text{otherwise},
\end{array}
\right. \qquad G\in \cG_n,
\label{eq:PM}
\end{equation}
where 
\begin{equation}
\Omega_{\vC^*} := | \{G \in \cG_n\colon\, \vC(G) = \vC^* \} |
\end{equation}
is the number of graphs that realise $\vC^*$. The \emph{canonical ensemble} $\Pcan$
is the unique probability distribution on $\cG_n$ that maximises the \emph{entropy} 
\begin{equation}
S_n({\rm P}) := - \sum_{G \in \cG_n}{\rm P}(G) \log {\rm P}(G)
\end{equation}
subject to the \emph{soft constraint} $\langle \vC \rangle  = \vC^*$, where
\begin{equation}
\label{softconstr}
\langle \vC \rangle  := \sum_{G \in \cG_n} \vC(G)\,{\rm P}(G).
\end{equation}
This gives the formula (see Jaynes~\cite{J57})
\begin{equation}
\Pcan(G) := \frac{1}{Z(\vt^*)}\,\eee^{H(\vec{\theta}^*,\vec{C}(G))},
\qquad G \in \cG_n,
\label{eq:PC}
\end{equation}
with 
\be
H(\vec{\theta}^*,\vec{C}(G)) := \vt^* \cdot \vC(G), \qquad
Z(\vt^*\,) := \sum_{G \in \cG_n} \eee^{\vt^* \cdot\hspace{2pt} \vC(G)},
\label{eq:Ham}
\ee
denoting the \emph{Hamiltonian} and the \emph{partition function}, respectively. In 
\eqref{eq:PC}--\eqref{eq:Ham} the parameter $\vt^*$ (which is a real-valued vector
the size of the constraint playing the role of a Langrange multiplier) must be set to 
the unique value that realises $\langle \vC \rangle  = \vC^*$. The Lagrange multiplier 
$\vec{\theta}^*$ exists and is unique. Indeed, the gradients of the constraints in 
\eqref{softconstr} are linearly independent vectors. Consequently, the Hessian matrix 
of the entropy of the canonical ensemble in \eqref{eq:PC} is a positive definite matrix, 
which implies uniqueness of the Lagrange multiplier. 

The \emph{relative entropy} of $\Pmic$ with respect to  $\Pcan$ is defined as
\begin{equation}
S_n(\Pmic \mid \Pcan) 
:= \sum_{G \in \cG_n} \Pmic(G) \log \frac{\Pmic(G)}{\Pcan(G)}.
\label{eq:KL1}
\end{equation}

\begin{definition}
\label{def:ensequiv}
{\rm In the dense regime, if
\footnote{In Squartini \emph{et al.}~\cite{SdMdHG15}, which was concerned with the \emph{sparse 
regime}, the relative entropy was divided by $n$ (the number of vertices). In the \emph{dense regime}, 
however, it is appropriate to divide by $n^2$ (the order of the number of edges).}
\begin{equation}
\label{eq:Equivalence}
s_{\infty} := \lim_{n\to\infty} \frac{1}{n^2}\,S_{n}(\Pmic | \Pcan) = 0,
\end{equation}
then $\Pmic$ and $\Pcan$ are said to be \emph{equivalent}.}
\hfill \qed
\end{definition}

Before proceeding, we recall an important observation made in Squartini \emph{et al.}~\cite{SdMdHG15}. 
For any $G_1,G_2\in\cG_n$, $\Pcan(G_1)=\Pcan(G_2)$ whenever $\vC(G_1)=\vC(G_2)$, i.e., the 
canonical probability is the same for all graphs with the same value of the constraint. We may therefore 
rewrite \eqref{eq:KL1} as
\begin{equation}
S_n(\Pmic \mid \Pcan) = \log \frac{\Pmic(G^*)}{\Pcan(G^*)},
\label{eq:KL2}
\end{equation}
where $G^*$ is \emph{any} graph in $\cG_n$ such that $\vC(G^*) =\vC^*$ (recall that we 
assumed that $\vC^*$ is realisable by at least one graph in $\cG_n$).  This fact greatly 
simplifies computations. 

\begin{remark}
\label{rem:ndep}
{\rm All the quantities above depend on $n$. In order not to burden the notation, we exhibit 
this $n$-dependence only in the symbols $\cG_n$ and $S_n(\Pmic \mid \Pcan)$. When we 
pass to the limit $n\to\infty$, we need to specify how $\vC(G)$, $\vC^*$ and $\vec{\theta}^*$ 
are chosen to depend on $n$. This will be done in Section~\ref{S2.1}.} \hfill \qed
\end{remark}


\subsection{Graphons}
\label{S1.4}

There is a natural way to embed a simple graph on $n$ vertices in a space of functions called 
\emph{graphons}. Let $W$ be the space of functions $h\colon\,[0,1]^2 \to [0,1]$ such that 
$h(x,y) = h(y,x)$ for all $(x,y) \in [0,1]^2$. A finite simple graph $G$ on $n$ vertices can be
represented as a graphon $h^{G} \in W$ in a natural way as (see Fig.~\ref{fig-graphon})
\begin{equation}
\label{graphondef}
h^{G}(x,y) := \left\{ \begin{array}{ll} 
1 &\mbox{if there is an edge between vertex } \lceil{nx}\rceil \mbox{ and vertex } \lceil{ny}\rceil,\\
0 &\mbox{otherwise}.
\end{array} 
\right.
\end{equation}

\begin{figure}[htbp]
\centering
\hspace{1.5cm}\includegraphics[width=0.7\linewidth, height=4.6cm]{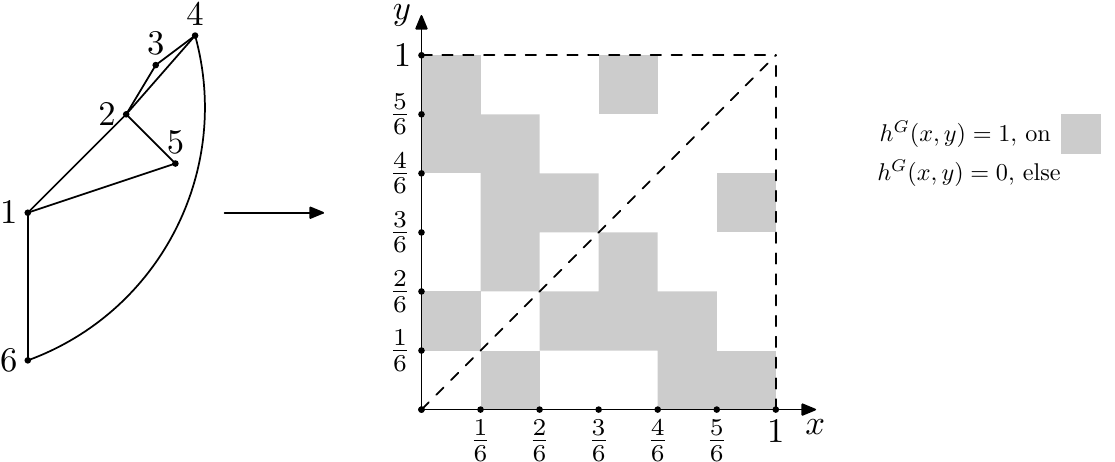}
\caption{\small An example of a graph $G$ and its graphon representation $h^G$.} 
\label{fig-graphon}
\end{figure}

\medskip\noindent
The space of graphons $W$ is endowed with the \emph{cut distance}
\begin{equation}
d_{\square} (h_1,h_2) := \sup_{S,T\subset [0,1]} 
\left|\int_{S\times T} \dd x\,\dd y\,[h_1(x,y) - h_2(x,y)]\right|,
\qquad h_1,h_2 \in W.
\end{equation}
On $W$ there is a natural equivalence relation $\equiv$. Let $\Sigma$ be the space of 
measure-preserving bijections $\sigma\colon\, [0,1] \to [0,1]$. Then $h_1(x,y)\equiv h_2(x,y)$ 
if $h_1(x,y) = h_2(\sigma x, \sigma y)$ for some $\sigma\in\Sigma$. This equivalence relation 
yields the quotient space $(\tilde{W},\delta_{\square})$, where $\delta_{\square}$ is the 
metric defined by 
\begin{equation}
\label{deltam}
\delta_{\square}(\tilde{h}_1,\tilde{h}_2) 
:= \inf _{\sigma_1,\sigma_2} d_{\square}(h_1^{\sigma_1}, h_2^{\sigma_2}),
\qquad \tilde{h}_1,\tilde{h}_2 \in \tilde{W}.
\end{equation}
To avoid cumbersome notation, throughout the sequel we suppress the $n$-dependence. Thus,
by $G$ we denote any simple graph on $n$ vertices, by $h^G$ its image in the graphon space 
$W$, and by $\tilde{h}^G$ its image in the quotient space $\tilde{W}$. Let $F$ and $G$ denote 
two simple graphs with vertex sets $V(F)$ and $V(G)$, respectively, and let $\text{hom}(F,G)$
be the number of homomorphisms from $F$ to $G$. The \emph{homomorphism density} is 
defined as 
\begin{equation}
t(F,G) := \frac{1}{|V(G)|^{|V(F)|}}\,\text{hom}(F,G).
\end{equation}
Two graphs are said to be \emph{similar} when they have similar homomorphism densities. 

\begin{definition}
\label{DefHom}
{\rm A sequence of labelled simple graphs $(G_n)_{n\in\N}$ is left-convergent when 
$(t(F,G_n))_{n\in\N}$ converges for any simple graph $F$.} \hfill \qed
\end{definition}
Consider a simple graph $F$ on $k$ vertices with edge set $E(F)$, and let $h \in W$. 
Similarly as above, define the density 
\begin{equation}
\label{graphon density}
t(F,h) := \int_{[0,1]^k} \dd x_1 \cdots \dd x_k \prod_{(i,j) \in E(F)} h(x_i,x_j).
\end{equation}
If $h^G$ is the image of a graph $G$ in the space $W$, then
\begin{equation}
\label{GGHOM}
t(F,h^G) = \int_{[0,1]^k} \dd x_1 \cdots \dd x_k \prod_{(i,j) \in E(F)} h^G(x_i,x_j) 
= \frac{1}{|V(G)|^{|V(F)|}}\,\text{hom}(F,G) = t(F,G).
\end{equation}
Hence a sequence of graphs $(G_n)_{n\in\N}$ is left-convergent to $h \in W$ when
\begin{equation}
\lim_{n\to\infty} t(F,G_n) = t(F,h).
\end{equation}

We conclude this section with three basic facts that will be needed later on. The first 
gives the relation between left-convergence of sequences of graphs and convergence 
in the quotient space $(\tilde{W},\delta_{\square})$, the second is a compactness 
property, while the third shows that the homomorphism density is Lipschitz continuous 
with respect to the $\delta_{\square}$-metric. 

\begin{proposition}[Borgs \textit{et al.}~\cite{BCLSV08}]
\label{QSC}
For a sequence of labelled simple graphs $(G_n)_{n\in\N}$ the following properties are 
equivalent:\\
(i) $(G_n)_{n\in\N}$ is left-convergent.\\
(ii) $(\tilde{h}^{G_n})_{n\in\N}$ is a Cauchy sequence in the metric $\delta_{\square}$.\\
(iii) $(t(F, h^{G_n}))_{n\in\N}$ converges for all finite simple graphs $F$.\\ 
(iv) There exists an $h\in W$ such that $\lim_{n\to\infty} t(F, h^{G_n}) = t(F,h)$ for all finite 
simple graphs $F$.  
\end{proposition}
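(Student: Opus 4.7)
The strategy is to close a cycle of implications using four ingredients: the identity \eqref{GGHOM} relating graph counts to graphon densities; the Lipschitz continuity of $t(F,\cdot)$ with respect to $\delta_\square$, announced as the third proposition of Section~\ref{S1.4}; the compactness of $(\tilde W,\delta_\square)$, announced as the second; and the separation fact that an element of $\tilde W$ is determined by the collection $\{t(F,\cdot)\colon F\text{ finite simple}\}$, which I would invoke from Lov\'asz--Szegedy theory as an external input.

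The easy half is immediate. By \eqref{GGHOM}, $t(F,G_n)=t(F,h^{G_n})$ for every finite simple graph $F$, so (i) and (iii) coincide verbatim; (iv) $\Rightarrow$ (iii) is tautological; and for (ii) $\Rightarrow$ (iii) the Lipschitz bound $|t(F,\tilde h_1)-t(F,\tilde h_2)|\le c_F\,\delta_\square(\tilde h_1,\tilde h_2)$ turns a $\delta_\square$-Cauchy sequence into one whose coordinates $t(F,h^{G_n})$ are Cauchy, and therefore convergent, for every $F$.

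The substantive implication is (iii) $\Rightarrow$ (ii), from which (iv) also follows. Given (iii), I would use compactness to extract from any subsequence of $(\tilde h^{G_n})$ a further $\delta_\square$-convergent subsequence with limit $\tilde h\in\tilde W$; Lipschitz continuity then forces $t(F,h)=\lim_{n\to\infty}t(F,h^{G_n})$ for every finite simple $F$, which is the value prescribed by (iii). If two subsequential limits $\tilde h_1,\tilde h_2$ existed, they would share all homomorphism densities, and separation of graphons by $\{t(F,\cdot)\}$ forces $\tilde h_1=\tilde h_2$ in $\tilde W$. A standard subsequence argument then promotes this to convergence of the whole sequence to a single $\tilde h$, yielding simultaneously the Cauchy property in (ii) and the graphon limit in (iv). The genuine obstacles are the two imported black-box inputs, compactness of $(\tilde W,\delta_\square)$ and the graphon counting lemma; once those are in place the cycle is essentially bookkeeping on top of \eqref{GGHOM} and the Lipschitz estimate.
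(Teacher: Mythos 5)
The paper does not actually prove Proposition~\ref{QSC}; it states it as an imported result of Borgs \emph{et al.}~\cite{BCLSV08}, so there is no in-text proof to compare against. Your sketch is nonetheless a faithful reconstruction of the standard argument from the graph-limits literature: (i)~$\Leftrightarrow$~(iii) is the identity~\eqref{GGHOM}, (iv)~$\Rightarrow$~(iii) is trivial, (ii)~$\Rightarrow$~(iii) is the forward counting lemma (Proposition~\ref{Lip}), and (iii)~$\Rightarrow$~(ii),(iv) follows from compactness (Proposition~\ref{comp}) together with the uniqueness theorem that a graphon in $\tilde W$ is determined by its homomorphism densities. Two remarks on the bookkeeping. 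First, in step~(ii)~$\Rightarrow$~(iii) the Lipschitz bound is stated in the paper for graphs, $|t(F,G_1)-t(F,G_2)|\le 4|E(F)|\,\delta_\square(G_1,G_2)$; one should note, as is standard, that the same estimate extends to arbitrary pairs in $\tilde W$, since that is the form you actually use in the compactness argument. Second, the genuinely nontrivial input is not what is usually called the ``counting lemma'' (that is the Lipschitz estimate you already have) but rather its converse, the uniqueness/separation theorem --- equivalently an inverse counting lemma --- asserting that $t(F,\tilde h_1)=t(F,\tilde h_2)$ for all finite simple $F$ implies $\tilde h_1=\tilde h_2$ in $(\tilde W,\delta_\square)$. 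You correctly isolate this as an external black box; just be aware it is of the same depth as the compactness theorem and should be cited explicitly (Lov\'asz--Szegedy~\cite{LS06} or Borgs \emph{et al.}~\cite{BCLSV08}) rather than folded under the label ``counting lemma.'' With those citations in place the cycle closes exactly as you describe.
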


\begin{proposition}[Lov\'asz and Szegedy \cite{LS06}]
\label{comp}
$(\tilde{W}, \delta_{\square})$ is compact.  
\end{proposition}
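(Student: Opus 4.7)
The plan is to prove compactness by establishing sequential compactness: given any sequence $(\tilde{h}_n)_{n\in\N}$ in $\tilde{W}$, I extract a subsequence converging in the metric $\delta_\square$. The central input is a weak regularity lemma for graphons (Frieze--Kannan): for every $\epsilon>0$ and every $h\in W$ there is a partition $\cP$ of $[0,1]$ into $m \leq 2^{O(1/\epsilon^2)}$ measurable blocks such that the step-function graphon $h_\cP$, defined by averaging $h$ over each rectangle $P_i\times P_j$, satisfies $d_\square(h,h_\cP)\leq\epsilon$, with $m$ bounded independently of $h$.

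First, fix $k\in\N$ and apply the lemma with $\epsilon_k = 1/k$ to every $h_n$, producing step-function approximations $h_n^{(k)}$ with at most $M_k := 2^{O(k^2)}$ blocks. Composing with a suitable $\sigma\in\Sigma$ leaves $\tilde{h}_n$ unchanged and allows me to assume that the blocks are intervals of $[0,1]$ arranged in a canonical order. Each $h_n^{(k)}$ is then specified by finitely many parameters: a probability vector of block sizes in the simplex of dimension $M_k-1$, together with a symmetric matrix of values in $[0,1]^{M_k\times M_k}$. Since this parameter space is compact, along a subsequence (depending on $k$) all parameters converge, and the resulting graphons $h_n^{(k)}$ converge in $L^1$, hence in $d_\square$, to a limiting step function $H^{(k)}$.

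A standard diagonal extraction then yields a single subsequence, still denoted $(h_n)$, such that for every $k$ one has $d_\square(h_n^{(k)}, H^{(k)}) \to 0$ as $n\to\infty$ (after possibly further measure-preserving rearrangements), while $d_\square(h_n, h_n^{(k)}) \leq 1/k$ by construction. Passing to the quotient and using the triangle inequality for $\delta_\square$ gives
\begin{equation}
\limsup_{n\to\infty} \delta_\square(\tilde{h}_n, \tilde{H}^{(k)}) \leq \tfrac{1}{k},
\end{equation}
so $(\tilde{H}^{(k)})_{k\in\N}$ is a Cauchy sequence in $\tilde{W}$. Completeness of $(\tilde{W},\delta_\square)$, which can be established by the same regularity machinery applied to a Cauchy sequence, produces a limit $\tilde{H}$, and a final application of the triangle inequality yields $\delta_\square(\tilde{h}_n,\tilde{H})\to 0$.

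The main obstacle is the passage to the quotient: convergence of representatives under $d_\square$ is not automatic, and one must select the bijections $\sigma_n$ so that the supports of the step approximations at different scales $k$ are simultaneously aligned into a shared finite-dimensional parameter space on which genuine compactness can be invoked. A secondary technical point is the regularity lemma itself, which is proved by iteratively refining partitions in a way that decreases the $L^2$-energy of the averaged graphon by a definite amount at each non-trivial refinement, forcing termination after a bounded number of steps.
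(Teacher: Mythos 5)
The paper offers no proof of this proposition; it simply cites Lov\'asz and Szegedy~\cite{LS06}, so there is no in-paper argument to compare against. Your sketch follows the broad outline of the Lov\'asz--Szegedy argument (weak regularity, step-function approximation into a compact finite-dimensional parameter space, diagonalisation), but there is a genuine gap at the decisive final step: producing the limit object $\tilde{H}$. You observe that $(\tilde{H}^{(k)})_k$ is Cauchy and then appeal to ``completeness of $(\tilde{W},\delta_\square)$, which can be established by the same regularity machinery.'' This is a deferral of a result that is essentially as hard as the theorem itself: for a metric space, compactness is precisely completeness plus total boundedness, and the regularity lemma already gives you total boundedness, so the remaining content of the theorem \emph{is} completeness. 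Invoking it without proof leaves the argument circular in spirit.

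The way Lov\'asz and Szegedy close this gap is precisely by resolving the ``main obstacle'' you name at the end. One applies the weak regularity lemma at level $k+1$ so that the new partition \emph{refines} the level-$k$ partition (the regularity lemma admits such a variant at the cost of a worse but still $n$-independent bound on the number of parts). A single measure-preserving rearrangement then turns all the partitions simultaneously into nested interval partitions, so the step functions $H^{(k)}$ become a uniformly bounded martingale with respect to the filtration generated by these nested partitions. The martingale convergence theorem then yields $H^{(k)} \to H$ pointwise almost everywhere and in $L^1$, hence in cut norm, and the triangle inequality gives $\delta_\square(\tilde{h}_n, \tilde{H}) \to 0$. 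Your sketch neither sets up the nesting nor the martingale, and without them there is no mechanism to produce the limit; aligning the partitions ``simultaneously'' after the fact, as you propose, is exactly what the refinement/nesting structure accomplishes and cannot be done post hoc for independently chosen partitions.
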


\begin{proposition}[Borgs \textit{et al.}~\cite{BCLSV08}]
\label{Lip}
Let $G_1,G_2$ be two labelled simple graphs, and let $F$ be a simple graph. Then 
\begin{equation}
|t(F,G_1) - t(F,G_2)| \leq 4|E(F)|\delta_{\square}(G_1,G_2).
\end{equation}
\end{proposition}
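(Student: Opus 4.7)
The plan is to reduce the statement to a graphon inequality and then telescope over the edges of $F$.

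First, by \eqref{GGHOM} we have $t(F,G_i)=t(F,h^{G_i})$ for $i=1,2$, and $t(F,h)$ is invariant when $h$ is composed with a measure-preserving bijection. Hence it suffices to prove the graphon bound
$$|t(F,h_1)-t(F,h_2)|\leq 4|E(F)|\,d_\square(h_1,h_2), \qquad h_1,h_2\in W,$$
applied to $h_1^{\sigma_1}$ and $h_2^{\sigma_2}$, after which taking the infimum over $\sigma_1,\sigma_2\in\Sigma$ via \eqref{deltam} yields the proposition.

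Next, I would enumerate $E(F)=\{(i_1,j_1),\ldots,(i_m,j_m)\}$ and use the telescoping identity $t(F,h_1)-t(F,h_2)=\sum_{\ell=1}^m T_\ell$, where
$$T_\ell := \int_{[0,1]^{|V(F)|}} \prod_{k<\ell} h_1(x_{i_k},x_{j_k}) \cdot [h_1-h_2](x_{i_\ell},x_{j_\ell}) \cdot \prod_{k>\ell} h_2(x_{i_k},x_{j_k})\,\dd\mathbf{x}.$$
By the triangle inequality it is enough to show $|T_\ell|\leq 4\,d_\square(h_1,h_2)$ for each $\ell$.

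The crucial step is a factorisation argument. Write $e_\ell=(u,v)$. Because $F$ is simple (no loops, no multi-edges), the edges of $F$ other than $e_\ell$ split cleanly into three groups: those incident to $u$ but not to $v$, those incident to $v$ but not to $u$, and those incident to neither. Upon fixing the remaining variables $\mathbf{x}':=(x_w\colon w\neq u,v)$, the product of the non-$e_\ell$ factors therefore takes the separable form $U(x_u,\mathbf{x}')\,V(x_v,\mathbf{x}')\,C(\mathbf{x}')$ with $U,V,C\in[0,1]$. Fubini and the layer-cake identity $U(\cdot,\mathbf{x}')=\int_0^1\mathbf{1}_{\{U\geq s\}}\,\dd s$ (and analogously for $V$) convert the $\dd x_u\,\dd x_v$ integration against $[h_1-h_2]$ into a convex combination of integrals $\int_{S\times T}(h_1-h_2)$, each bounded in absolute value by $d_\square(h_1,h_2)$. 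Integrating over $\mathbf{x}'$ while using $C\leq 1$ then gives $|T_\ell|\leq 4\,d_\square(h_1,h_2)$, and summing over $\ell$ completes the proof.

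The main obstacle is exactly the factorisation step, which relies essentially on $F$ being simple (so that $e_\ell$ is the unique edge between $u$ and $v$ and $u\neq v$); everything else — the telescoping, Fubini, and the layer-cake reduction to rectangles — is routine. The constant $4$ absorbs the standard slack between cut-norm tests against indicators of sets and against $[-1,1]$-valued functions, needed because $h_1-h_2$ changes sign.
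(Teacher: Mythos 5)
The paper does not supply its own proof of Proposition~\ref{Lip}; it is quoted as a known result from Borgs \emph{et al.}~\cite{BCLSV08}. Your argument is a correct and essentially standard proof of that counting lemma: the reduction from $\delta_\square$ to $d_\square$ via invariance of $t(F,\cdot)$ under measure-preserving bijections, the telescoping over edges, and the separation of the non-$e_\ell$ factors into $U(x_u,\mathbf{x}')V(x_v,\mathbf{x}')C(\mathbf{x}')$ followed by the layer-cake reduction to indicators are exactly the ingredients of the proof in the cited reference (and in Lov\'asz--Szegedy). Two small observations: (i) your layer-cake step actually yields $|T_\ell|\leq d_\square(h_1,h_2)$ rather than $4\,d_\square(h_1,h_2)$, so in fact you obtain the sharper bound $|E(F)|\,\delta_\square$, which a fortiori gives the stated $4|E(F)|\,\delta_\square$; the factor $4$ in the literature comes from variants of the cut norm tested against $[-1,1]$-valued functions and is not needed here, since $U,V\in[0,1]$ admit a direct level-set decomposition; (ii) your remark that simplicity of $F$ is what makes the factorisation into $U,V,C$ possible is exactly the right thing to flag --- a second edge between $u$ and $v$ would produce a factor $h_i(x_u,x_v)$ that cannot be absorbed into either $U$ or $V$.
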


For a more detailed description of the structure of the space $(\tilde{W},\delta_{\square})$ 
we refer the reader to Borgs \textit{et al.}~\cite{BCLSV08, BCLSV12} and Diao 
\textit{et at.}~\cite{DGKR15}.


\subsection{Large deviation principle for the Erd\H{o}s-R\'enyi random graph}
\label{S1.5}

In this section we recall a few key facts from the literature about rare events in Erd\H{o}s-R\'enyi 
random graphs, formulated in terms of a large deviation principle. Importantly, the scale that is used 
is $n^2$, the order of the number of \emph{edges} in the graph.

We start by introducing the large deviation rate function. For $p\in(0,1)$ and $u\in[0,1]$, 
let 
\begin{equation}
\label{eq: rate function}
\begin{aligned}
I_p(u) &:= \frac{1}{2}u\log\left(\frac{u}{p}\right) + \frac{1}{2}(1-u)\log\left(\frac{1-u}{1-p}\right),\\
I(u) &:= \frac{1}{2}u\log u +\frac{1}{2}(1-u)\log(1-u) = I_{\tfrac12}(u) - \tfrac12 \log 2,
\end{aligned}
\end{equation}
with the convention that $0\log0=0$. For $h\in W$ we write, with a mild abuse of notation,  
\begin{equation}
\label{eq: rate 2}
I_p(h) := \int_{[0,1]^2} \dd x\, \dd y\,\,I_p(h(x,y)), \qquad
I(h) := \int_{[0,1]^2} \dd x\, \dd y\,\,I(h(x,y)).
\end{equation}
On the quotient space $(\tilde{W},\delta_{\square})$ we define $I_p(\tilde{h}) = I_p (h)$, where 
$h$ is any element of the equivalence class $\tilde{h}$. 

\begin{proposition}[Chatterjee and Varadhan~\cite{CV11}]
\label{RFprop}
The function $I_p$ is well-defined on $\tilde{W}$ and is lower semi-continuous under the 
$\delta_{\square}$-metric.  
\end{proposition}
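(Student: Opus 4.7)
The plan is to dispatch well-definedness on $\tilde{W}$ first, then prove lower semi-continuity on $(W,d_\square)$ and transfer it to $(\tilde{W},\delta_\square)$. For well-definedness, if $h_1(x,y)=h_2(\sigma x,\sigma y)$ for some measure-preserving bijection $\sigma\in\Sigma$, then the change of variables $(u,v)=(\sigma x,\sigma y)$ preserves Lebesgue measure on $[0,1]^2$ and applied to \eqref{eq: rate 2} yields $I_p(h_1)=I_p(h_2)$. Hence $I_p$ descends to a function on $\tilde{W}$, and whenever $\delta_\square(\tilde{h}_n,\tilde{h})\to 0$ one can choose representatives $h_n,h\in W$ with $d_\square(h_n,h)\to 0$ and evaluate $I_p$ on either representative.

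For lower semi-continuity the main tool is the Legendre-dual representation of the Bernoulli rate function,
\[
2I_p(u)=\sup_{\theta\in\R}\bigl[\theta u-\Lambda(\theta)\bigr],\qquad \Lambda(\theta):=\log\bigl(1-p+p\,\eee^{\theta}\bigr),\qquad u\in[0,1],
\]
which integrates pointwise to
\[
2I_p(h)=\sup_{\theta(\cdot,\cdot)}\int_{[0,1]^2}\bigl[\theta(x,y)h(x,y)-\Lambda(\theta(x,y))\bigr]\,\dd x\,\dd y,
\]
with $\theta$ ranging over bounded measurable functions on $[0,1]^2$. I would then restrict the supremum to \emph{rectangle-step} test functions $\theta=\sum_{i,j=1}^{k}c_{ij}\mathbf{1}_{S_i\times T_j}$, where $(S_i),(T_j)$ are measurable partitions of $[0,1]$ and $c_{ij}\in\R$. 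For any such $\theta$ the linear part equals $\sum_{i,j}c_{ij}\int_{S_i\times T_j}h$, which is $d_\square$-continuous in $h$ directly from the definition of the cut norm, while $\int\Lambda(\theta)$ is independent of $h$. Thus each functional $h\mapsto\int[\theta h-\Lambda(\theta)]$ in this restricted class is $d_\square$-continuous, and a supremum (over a countable subfamily) of $d_\square$-continuous functionals is $d_\square$-lower semi-continuous.

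The main obstacle is justifying that this restriction does not decrease the supremum, i.e.\ the density claim that any bounded measurable $\theta_0$ can be approximated by rectangle-step $\theta_n$ in such a way that $\int\theta_n h\to\int\theta_0 h$ and $\int\Lambda(\theta_n)\to\int\Lambda(\theta_0)$. Since measurable rectangles generate the Borel $\sigma$-algebra of $[0,1]^2$, a standard $\pi$-$\lambda$/monotone-class argument approximates $\mathbf{1}_E$ in $L^1$ by finite disjoint unions of rectangles for any Borel $E\subset[0,1]^2$; combining this with truncation of $\theta_0$ yields uniformly bounded rectangle-step $\theta_n$ converging to $\theta_0$ in $L^1$. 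Dominated convergence, using that $h\in[0,1]$ and that $\Lambda$ is smooth with bounded derivative on any compact interval, then delivers the two limits and completes the proof of lower semi-continuity on $(W,d_\square)$; the extension to $(\tilde{W},\delta_\square)$ follows from the invariance established at the outset.
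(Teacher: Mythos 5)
Your proof is correct, and it is worth noting that the paper itself offers no argument here — Proposition~\ref{RFprop} is simply quoted from Chatterjee and Varadhan~\cite{CV11} — so the comparison is with the standard proof rather than an in-paper one. The usual route establishes $\liminf_n I_p(h_n)\ge I_p(h)$ under $d_\square$-convergence by averaging: for a finite product partition $\mathcal P$ the step-function approximation $h_{\mathcal P}$ (conditional expectation of $h$ given $\mathcal P\times\mathcal P$) satisfies $I_p(h_{\mathcal P})\le I_p(h)$ by Jensen, the map $h\mapsto h_{\mathcal P}$ is $d_\square$-continuous, and $I_p(h_{\mathcal P})\uparrow I_p(h)$ as $\mathcal P$ refines. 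Your argument is the convex-conjugate dual of this: you represent $2I_p$ as the Legendre transform of the Bernoulli cumulant $\Lambda$, integrate the pointwise duality, restrict the test functions to rectangle-step $\theta$, and observe that each resulting affine functional is $d_\square$-Lipschitz, so the supremum is lower semicontinuous. The restriction to rectangle-step test functions is correctly justified: the monotone-class argument combined with truncation yields uniformly bounded step approximants converging in $L^1$, and the local Lipschitz continuity of $\Lambda$ together with $0\le h\le 1$ carries the $L^1$ convergence through both terms. Both approaches rest on the same two facts — convexity of the integrand and cut-norm continuity of integrals against product step functions — so they are dual faces of one idea, but your presentation is a clean, self-contained alternative. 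One small remark: your change-of-variables step proves well-definedness for the equivalence relation as literally stated in the paper (existence of a single $\sigma\in\Sigma$); in the graphon literature $\tilde W$ is usually the quotient by $\delta_\square$-distance zero, for which well-definedness of $I_p$ is not immediate from a single change of variables but does follow from the lower semicontinuity you then prove, applied in both directions along approximating sequences.
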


Consider the set $\cG_n$ of all graphs on $n$ vertices and the Erd\H{o}s-R\'enyi 
probability distribution $\PP_{n,p}$ on $\cG_n$. Through the mappings $G \to h^{G} \to 
\tilde{h}^G$ we obtain a probability distribution on $W$ (with a slight abuse of notation again 
denoted by $\PP_{n,p}$), and a probability distribution $\tilde{\PP}_{n,p}$ on $\tilde{W}$. 

\begin{theorem}[Chatterjee and Varadhan~\cite{CV11}]
\label{th:LDP}
For every $p \in (0,1)$, the sequence of probability distributions $(\tilde{\PP}_{n,p})_{n\in\N}$ 
satisfies the large deviation principle on $(\tilde{W},\delta_{\square})$ with rate function $I_p$ 
defined by \eqref{eq: rate 2}, i.e.,
\begin{equation}
\begin{aligned}
\limsup_{n\to\infty} \frac{1}{n^2}\, \log\tilde{\PP}_{n,p}(\tilde{C}) 
&\leq -\inf_{\tilde{h}\in\tilde{W}}I_p(\tilde{h}) \qquad \forall\,\tilde{C}\subset \tilde{W} \mbox{ closed},\\
\liminf_{n\to\infty}\ \frac{1}{n^2}\, \log\tilde{\PP}_{n,p}(\tilde{O})
&\geq -\inf_{\tilde{h}\in\tilde{O}}I_p(\tilde{h}) \qquad \,\,\,\forall\,\tilde{O}\subset \tilde{W} \mbox{ open}.
\end{aligned}
\end{equation}

\end{theorem}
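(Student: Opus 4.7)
The approach I would follow is that of Chatterjee and Varadhan, organised around the compactness of $(\tilde{W},\delta_{\square})$ established in Proposition~\ref{comp}. It suffices to prove a \emph{local} LDP: for every $\tilde{h}_0\in\tilde{W}$,
\begin{equation}
\lim_{\epsilon\downarrow 0}\limsup_{n\to\infty}\tfrac{1}{n^2}\log\tilde{\PP}_{n,p}(B_{\epsilon}(\tilde{h}_0))
=\lim_{\epsilon\downarrow 0}\liminf_{n\to\infty}\tfrac{1}{n^2}\log\tilde{\PP}_{n,p}(B_{\epsilon}(\tilde{h}_0))
=-I_p(\tilde{h}_0),
\end{equation}
where $B_{\epsilon}(\tilde{h}_0)$ is the open $\delta_{\square}$-ball. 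The upper bound on any closed $\tilde{C}\subset\tilde{W}$ then follows from a finite sub-cover argument combined with lower semi-continuity of $I_p$ (Proposition~\ref{RFprop}); the lower bound on any open $\tilde{O}$ is immediate by choosing $\tilde{h}_0\in\tilde{O}$ with $I_p(\tilde{h}_0)$ close to the infimum.

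For the local estimates the key identity is $\PP_{n,p}(G)=p^{e(G)}(1-p)^{\binom{n}{2}-e(G)}$. Since $e(G)/\binom{n}{2}=t(K_2,h^G)$ is $\delta_{\square}$-Lipschitz by Proposition~\ref{Lip}, every $G$ with $\delta_{\square}(\tilde{h}^G,\tilde{h}_0)<\epsilon$ satisfies
\begin{equation}
-\log\PP_{n,p}(G)=n^2\big[I_p(h_0)+\tfrac{1}{2}H(h_0)\big]+O(\epsilon\,n^2)+o(n^2),
\end{equation}
where $H(h):=-\int_{[0,1]^2}[h\log h+(1-h)\log(1-h)]\,\dd x\,\dd y$ is the graphon entropy (the algebraic identity $2I_p(h)+H(h)=-\bar h\log p-(1-\bar h)\log(1-p)$ with $\bar h:=\int h$ follows directly from \eqref{eq: rate function}). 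The local LDP therefore reduces to the combinatorial \emph{counting lemma}
\begin{equation}
\label{eq:count-plan}
\big|\{G\in\cG_n:\delta_{\square}(\tilde{h}^G,\tilde{h}_0)<\epsilon\}\big|=\exp\!\big(\tfrac{n^2}{2}H(h_0)+o(n^2)\big)
\end{equation}
as $n\to\infty$ followed by $\epsilon\downarrow 0$; together with the probability estimate this yields exactly $\exp(-n^2 I_p(h_0)+o(n^2))$ after cancellation of the $\tfrac{1}{2}H(h_0)$ terms.

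The main obstacle is establishing \eqref{eq:count-plan}. For a step-function $h_0$ with blocks of rational sizes $\lambda_i n$ and densities $h_{ij}$ it is a direct Stirling-type computation: the number of graphs with the prescribed block edge counts equals
$\prod_{i<j}\binom{\lambda_i\lambda_j n^2}{h_{ij}\lambda_i\lambda_j n^2}\prod_i\binom{\binom{\lambda_i n}{2}}{h_{ii}\binom{\lambda_i n}{2}}=\exp(\tfrac{n^2}{2}H(h_0)+o(n^2))$, while the vertex-to-block multinomial $\binom{n}{\lambda_1 n,\dots,\lambda_k n}\le n!=e^{o(n^2)}$ and the quotient by $\Sigma$ (at most an $n!$ factor) contribute negligibly. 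The passage to general $h_0$ then proceeds via the weak regularity lemma of Frieze-Kannan: one approximates $h_0$ in $\delta_{\square}$ by a step-function $h_0^{(k)}$ with $k=k(\epsilon)$ blocks and transfers the counting estimate, the delicate point being continuity of $h\mapsto H(h)$ under $\delta_{\square}$ on sets of bounded entropy (a consequence of dominated convergence combined with Proposition~\ref{RFprop}). For the lower bound a cleaner alternative is to sample the $W$-random graph $G(n,h_0)$ with edge probabilities $h_0(U_i,U_j)$ for i.i.d.\ $U_i\sim\mathrm{Unif}[0,1]$, which concentrates on $\tilde{h}_0$ in $\delta_{\square}$ by Proposition~\ref{QSC}, and to compare it to $\PP_{n,p}$ via Radon-Nikodym: the log-likelihood ratio equals $-n^2 I_p(h_0)+o(n^2)$ with high probability by the law of large numbers, which immediately delivers the matching lower bound on $\tilde{\PP}_{n,p}(B_{\epsilon}(\tilde{h}_0))$.
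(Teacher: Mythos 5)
The paper does not prove Theorem~\ref{th:LDP}: it is cited verbatim from Chatterjee and Varadhan~\cite{CV11}, so there is no in-paper argument to compare against. Your outline does faithfully reproduce the architecture of the Chatterjee--Varadhan proof --- compactness of $(\tilde{W},\delta_\square)$ reducing the full LDP to a local one on $\delta_\square$-balls, the exact algebraic splitting $-\log\PP_{n,p}(G)=n^2\big[I_p(h^G)+\tfrac12 H(h^G)\big]+o(n^2)$, reduction to the counting estimate \eqref{eq:count-plan}, Stirling on step functions plus (weak) regularity for the upper bound, and a $W$-random graph change of measure for the lower bound --- and the block-entropy Stirling computation you quote is arithmetically correct.

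One point should be corrected, though it does not break the argument: $h\mapsto H(h)$ is \emph{not} $\delta_\square$-continuous, even on sets of bounded entropy. The standard example is the $1/n$-period $\{0,1\}$ checkerboard graphon $h_n$, which converges in $\delta_\square$ to the constant $\tfrac12$ while $H(h_n)\equiv 0 \not\to H(\tfrac12)=\log 2$. What actually holds --- and what the counting upper bound in \eqref{eq:count-plan} genuinely needs --- is \emph{upper semi-continuity} of $H$ under $\delta_\square$, which follows immediately from Proposition~\ref{RFprop} (lower semi-continuity of $I_p$) together with continuity of $\tilde h\mapsto\int h$ (take $S=T=[0,1]$ in the cut norm) and the identity $2I_p(h)+H(h)=-(\int h)\log p-(1-\int h)\log(1-p)$. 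The phrase ``continuity \ldots a consequence of dominated convergence'' should therefore be replaced by ``upper semi-continuity via Proposition~\ref{RFprop}''; the matching lower bound is then supplied, as you already indicate, by the $W$-random graph concentration and Radon--Nikodym comparison rather than by any continuity of $H$.
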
 

Using the large deviation principle we can find asymptotic expressions for the number of simple 
graphs on $n$ vertices with a given property. In what follows a property of a graph is defined 
through an \emph{operator} $T\colon\,W\to\R^m$ for some $m \in \N$. We assume that the operator 
$T$ is continuous with respect to the $\delta_{\square}$-metric, and for some $\vec{T}^*\in\R^m$ we
consider the sets 
\begin{equation} 
\label{W*def}
\tilde{W}^* := \big\{\tilde{h}\in\tilde{W}\colon\,T(\tilde{h}) = \vec{T}^*\big\},
\qquad \tilde{W}^*_n := \big\{\tilde{h}\in \tilde{W}^*\colon\,\tilde{h}=\tilde{h}^G 
\text{ for some } G \text{ on } n \text{ vertices}\big\}.
\end{equation}
By the continuity of the operator $T$, the set $\tilde{W}^*$ is closed. Therefore, using 
Theorem~\ref{th:LDP}, we obtain the following asymptotics for the cardinality of $\tilde{W}^*_n$.

\begin{corollary}[Chatterjee~\cite{C16}]
\label{Cor: Count}
For any measurable set $\tilde{W}^*\subset\tilde{W}$, with $\tilde{W}^*_n$ as defined in 
\eqref{W*def},
\begin{equation}
-\inf_{\tilde{h} \in {\rm int}(\tilde{W}^*)}I(\tilde{h}) \leq 
\liminf_{n\to\infty} \frac{\log|\tilde{W}^*_n|}{n^2} \leq
\limsup_{n\to\infty} \frac{\log|\tilde{W}^*_n|}{n^2} \leq
-\inf_{\tilde{h} \in \tilde{W}^*}I(\tilde{h}),
\end{equation}
where ${\rm int}(\tilde{W}^*)$ is the interior of $\tilde{W}^*$. 
\end{corollary}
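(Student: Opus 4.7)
The plan is to reduce this counting corollary to a direct application of the large deviation principle of Theorem~\ref{th:LDP} at the special parameter value $p=\tfrac12$, where the Erd\H{o}s-R\'enyi measure $\PP_{n,1/2}$ is simply the uniform distribution on $\cG_n$. In that setting one expects
\[
\tilde{\PP}_{n,1/2}(\tilde{W}^*) \approx \frac{|\tilde{W}^*_n|}{2^{\binom{n}{2}}},
\]
modulo combinatorial factors corresponding to the number of labellings of a given unlabelled graph. The rate function $I_{1/2}$ in Theorem~\ref{th:LDP} and the function $I$ in Corollary~\ref{Cor: Count} differ by the constant $\tfrac12\log 2$, which cancels against the $2^{\binom{n}{2}}$ normalisation on the right-hand side. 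This is the only computation really going on.

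In detail, the first step is to write, for any graph $G\in\cG_n$, $\PP_{n,1/2}(G)=2^{-\binom{n}{2}}$, so that the pushforward measure on $\tilde{W}$ satisfies
\[
\tilde{\PP}_{n,1/2}(\tilde{W}^*) = 2^{-\binom{n}{2}}\,\bigl|\{G\in\cG_n\colon \tilde{h}^G\in \tilde{W}^*\}\bigr|.
\]
Since every unlabelled graph on $n$ vertices has between $1$ and $n!$ distinct vertex-labellings, I bound
\[
|\tilde{W}^*_n|\ \le\ \bigl|\{G\in\cG_n\colon \tilde{h}^G\in \tilde{W}^*\}\bigr|\ \le\ n!\,|\tilde{W}^*_n|.
\]
Combining the two displays, taking logarithms, dividing by $n^2$, and using $\log(n!)=O(n\log n)$ together with $\binom{n}{2}/n^2\to \tfrac12$, I get
\[
\lim_{n\to\infty} \frac{1}{n^2}\log\tilde{\PP}_{n,1/2}(\tilde{W}^*)
= \lim_{n\to\infty} \frac{\log|\tilde{W}^*_n|}{n^2} - \tfrac12 \log 2,
\]
in the sense that the $\liminf$ and $\limsup$ of both sides agree up to the constant $\tfrac12\log 2$.

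The second step is to apply Theorem~\ref{th:LDP} with $p=\tfrac12$. Recalling from \eqref{eq: rate function} that $I_{1/2}(u)=I(u)+\tfrac12\log 2$ and hence $I_{1/2}(\tilde{h})=I(\tilde{h})+\tfrac12\log 2$ on $\tilde{W}$, the upper LDP bound applied to the closed set $\tilde{W}^*$ gives
\[
\limsup_{n\to\infty}\frac{1}{n^2}\log\tilde{\PP}_{n,1/2}(\tilde{W}^*) \le -\inf_{\tilde h\in\tilde W^*} I(\tilde h) - \tfrac12\log 2,
\]
while the lower LDP bound applied to $\mathrm{int}(\tilde{W}^*)$ gives
\[
\liminf_{n\to\infty}\frac{1}{n^2}\log\tilde{\PP}_{n,1/2}(\tilde{W}^*) \ge -\inf_{\tilde h\in\mathrm{int}(\tilde W^*)} I(\tilde h) - \tfrac12\log 2.
\]
Substituting these into the asymptotic identity from step one, the $-\tfrac12\log 2$ terms cancel and I obtain exactly the two outer inequalities of the corollary; the middle inequality $\liminf\le\limsup$ is trivial.

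The only subtle point — and the place where any obstacle would arise — is the combinatorial sandwich relating $|\tilde{W}^*_n|$ to the number of labelled graphs whose equivalence class lies in $\tilde{W}^*$. The crude bound by $n!$ is wasteful for graphs with large automorphism groups, but it is more than good enough because $\log(n!)=o(n^2)$; no finer graphon counting is required. Apart from this, the argument is purely the observation that choosing $p=\tfrac12$ turns the LDP into a counting statement, with the rate-function shift $\tfrac12\log 2$ exactly accounting for the uniform weight $2^{-\binom{n}{2}}$.
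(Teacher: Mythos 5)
Your argument is correct and is in fact the standard derivation found in Chatterjee~\cite{C16} (the paper cites the result without proof): uniformise via $p=\tfrac12$, sandwich the number of isomorphism classes between the number of labelled graphs and that number divided by $n!$, note $\log n! = o(n^2)$ and $\binom{n}{2}/n^2 \to \tfrac12$, and then invoke the two halves of Theorem~\ref{th:LDP}. One small sharpening worth recording: you silently use that $\tilde h^{G_1}=\tilde h^{G_2}$ in $\tilde W$ (i.e.\ $\delta_\square = 0$) for two graphs on the same vertex set is equivalent to $G_1\cong G_2$; this is a known fact in the graph-limits literature but not completely trivial, and it is what makes the $1\le\#\{\text{labellings}\}\le n!$ bound legitimate.

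There is one place where you are more careful than the corollary's own phrasing. You apply the LDP upper bound "to the closed set $\tilde W^*$", which is the right move, but the corollary as printed says "for any measurable set $\tilde W^*$." As stated, that is too strong: if $\tilde W^*$ is not closed one only gets $\limsup_n n^{-2}\log|\tilde W^*_n|\le -\inf_{\overline{\tilde W^*}}I$, and this can be strictly larger than $-\inf_{\tilde W^*}I$. A concrete failure for the printed statement: take $\tilde W^*$ to be the (dense, non-closed) set of all graphons coming from finite graphs; then $I\equiv 0$ on $\tilde W^*$ so the right-hand side is $0$, whereas $|\tilde W^*_n|$ is the number of isomorphism classes of graphs on $n$ vertices and $n^{-2}\log|\tilde W^*_n|\to\tfrac12\log 2>0$. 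So your proof is correct precisely for closed $\tilde W^*$, which is all the paper ever uses (since $\tilde W^*$ there is the preimage of a point under the continuous map $\vec T$), but you should state the closedness hypothesis explicitly rather than inherit the corollary's "measurable."
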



\section{Variational characterisation of ensemble equivalence}
\label{S2}

In this section we present a number of preparatory results we will need in Section~\ref{S3}
to state our theorem on the equivalence between $\Pmic$ and $\Pcan$. Our main result 
is Theorem~\ref{th:Limit} below, which gives us a \emph{variational characterisation of ensemble 
equivalence}. In Section~\ref{S2.1} we introduce our constraints on the subgraph counts. In 
Section~\ref{S2.2} we rephrase the canonical ensemble in terms of graphons.  In Section~\ref{S2.3} 
we state and prove Theorem~\ref{th:Limit}.  


\subsection{Subgraph counts}
\label{S2.1}

First we introduce the concept of subgraph counts, and point out how the corresponding 
canonical distribution is defined. Label the simple graphs in any order, e.g., $F_1$ is an 
edge, $F_2$ is a wedge, $F_3$ is triangle, etc. Let $C_k(G)$ denote the number of 
subgraphs $F_k$ in $G$. In the dense regime, $C_k(G)$ grows like $n^{V_k}$, where 
$V_k=|V(F_k)|$ is the number of vertices in $F_k$. For $m \in \N$, consider the following 
\emph{scaled vector-valued function} on $\cG_n$:
\begin{equation}
\vec{C}(G) := \left(\frac{p(F_k)C_{k}(G)}{n^{V_k-2}}\right)_{k=1}^m
= n^2\left(\frac{p(F_k)C_{k}(G)}{n^{V_k}}\right)_{k=1}^m.
\end{equation}
The term $p(F_k)$ counts the edge-preserving permutations of the vertices of $F_k$, i.e., 
$p(F_1)=2$ for an edge, $p(F_2)=2$ for a wedge, $p(F_3)=6$ for a triangle, etc. 
The term $C_k(G)/n^{V_k}$ represents a subgraph density in the graph $G$. The additional 
$n^2$ guarantees that the full vector scales like $n^2$, the scaling of the large deviation principle in 
Theorem \ref{th:LDP}. For a simple graph $F_k$ we define 
the \emph{homomorphism density} as
\begin{equation}
t(F_k,G) := \frac{\text{hom}(F_k,G)}{n^{V_k}}
= \frac{p(F_k)C_k(G)}{n^{V_k}},
\end{equation}
which does not distinguish between permutations of the 
vertices. Hence the Hamiltonian becomes
\begin{equation}
\label{eq:HF}
H(\vec{\theta}, \vec{T}(G))=n^2 \sum_{k=1}^m \theta_k \,t(F_k,G) 
= n^2 (\vec{\theta}\cdot\vec{T}(G)), \qquad G \in \cG_n,
\end{equation}
where 
\begin{equation}
\label{operator}
\vec{T}(G) := \left(t(F_k, G)\right)_{k=1}^m.
\end{equation}
The canonical ensemble with parameter $\vec{\theta}$ thus takes the form
\begin{equation}
\label{eq:CPD}
\Pcan(G \mid \vec{\theta}\,) := \eee^{n^2\big[\vec{\theta}\cdot\vec{T}(G)-\psi_n(\vec{\theta}\,)\big]},
\qquad G \in \cG_n,
\end{equation}
where $\psi_n$ replaces the \emph{partition function}: 
\begin{equation}
\label{eq:PF}
\psi_n(\vec{\theta}) := \frac{1}{n^2}\log\sum_{G\in\cG_n} 
\eee^{n^2 (\vec{\theta}\hspace{2pt}\cdot\hspace{2pt}\vec{T}(G))}.
\end{equation}
In the sequel we take $\vec{\theta}$ equal to a specific value $\vec{\theta}^*$, so as to meet 
the soft constraint, i.e.,
\begin{equation}
\label{softconstraint2}
\langle \vec{T} \rangle   = \sum_{G\in\cG_n}\vec{T}(G)\,\Pcan(G) = \vec{T}^*.
\end{equation}
 The canonical probability then becomes
\begin{equation}
\label{canonicprob}
\Pcan(G) = \Pcan(G\mid \vec{\theta^*})
\end{equation}
In Section~\ref{S4.1} we will discuss how to find $\vec{\theta}^*$. 

\begin{remark}
\label{rem:LMn}
{\rm (i) The constraint $\vec{T}^*$ and the Lagrange multiplier $\vec{\theta}^*$ in general depend on $n$, 
i.e., $\vec{T}^*=\vec{T}^*_n$ and $\vec{\theta}^* = \vec{\theta}^*_n$ (recall Remark~\ref{rem:ndep}). 
We consider constraints that converge when we pass to the limit $n\to\infty$, i.e., 
\begin{equation}
\label{eq: Assumption T}
\lim_{n\to\infty} \vec{T}^*_n = \vec{T}^*_\infty.
\end{equation}
Consequently, we expect that  
\begin{equation}
\label{eq:Assumption}
\lim_{n\to\infty} \vec{\theta}^*_n = \vec{\theta}^*_\infty.
\end{equation}
Throughout the sequel we \emph{assume} that \eqref{eq:Assumption} holds. If convergence fails, 
then we may still consider subsequential convergence. The subtleties concerning \eqref{eq:Assumption} 
are discussed in Appendix~\ref{app}.\\ 
(ii) In what follows, we suppress the dependence on $n$ and write $\vec{T}^*,\vec{\theta}^*$ instead 
of $\vec{T}^*_n,\vec{\theta}^*_n$, but we keep the notation $\vec{T}^*_\infty,\vec{\theta}^*_{\infty}$ 
for the limit. In addition, throughout the sequel we write $\vec{\theta},\vec{\theta}_{\infty}$ instead of 
$\vec{\theta}^*,\vec{\theta}_{\infty}^*$ when we view these as parameters that do not depend on $n$. 
This distinction is crucial when we take the limit $n\to\infty$.}
\hfill\qed
\end{remark}


\subsection{From graphs to graphons}
\label{S2.2}

In \eqref{GGHOM} we saw that if we map a finite simple graph $G$ to its graphon $h^G$, then 
for each finite simple graph $F$ the homomorphism densities $t(F,G)$ and $t(F,h^G)$ are 
identical. If $(G_n)_{n\in\N}$ is left-convergent, then 
\begin{equation}
\lim_{n\to\infty} \vec{T} (G_n) = \left(t(F_k, h)\right)_{k=1}^m
\end{equation}
for some $h \in W$, as an immediate consequence of Theorem~\ref{QSC}. We further see that 
the expression in \eqref{eq:HF} can be written in terms of graphons as
\begin{equation}
\label{eq:HFG}
H(\vec{\theta}, \vec{T}(G))=n^2 \sum_{k=1}^m \theta_k \,t(F_k,h^G).
\end{equation}
With this scaling the \emph{hard constraint} is denoted by $\vec{T}^*$, has the interpretation of 
the \emph{density} of an observable quantity in $G$, and defines a subspace of the quotient space 
$\tilde{W}$, which we denote by $\tilde{W}^*$, and which consists of all graphons that meet the 
hard constraint, i.e., 
\begin{equation}
\tilde{W}^* := \{\tilde{h}\in \tilde{W}\colon\,\vec{T}(h) = \vec{T}^*\}.
\end{equation}
The \emph{soft constraint} in the canonical ensemble becomes $\langle \vec{T} \rangle 
= \vec{T}^*$ (recall \eqref{softconstr}).

\vspace{-2mm}


\subsection{Variational formula for specific relative entropy}
\label{S2.3}

In what follows, the limit as $n \to \infty$ of the partition function $\psi_n(\vec{\theta})$ defined in 
(\ref{eq:PF}) plays an important role. This limit has a variational representation that will be key to 
our analysis.

\begin{theorem}[Chatterjee and Diaconis \cite{CD13}] 
\label{thm: CD partition function}
Let $\vec{T}\colon\,\tilde{W}\rightarrow\mathbb{R}^m$be the operator defined in \eqref{operator}. 
For any $\vec{\theta}\in\mathbb{R}^m$ (not depending on $n$),
\begin{equation}
\label{eq: CD variational formula}
\lim_{n\rightarrow\infty}\psi_n(\vec{\theta}) 
= \sup_{\tilde{h}\in\tilde{W}}\left(\vec{\theta}\cdot \vec{T}(\tilde{h}) -I(\tilde{h})\right)
\end{equation}
with $I$ and $\psi_n$ as defined in \eqref{eq: rate 2} and \eqref{eq:PF}.
\end{theorem}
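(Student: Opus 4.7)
The plan is to realise $\psi_n(\vec{\theta})$ as a scaled log-moment generating function of $\vec{T}$ under the uniform (Erd\H{o}s--R\'enyi with $p=\tfrac12$) measure on $\cG_n$, and then apply Varadhan's lemma using the large deviation principle of Theorem~\ref{th:LDP} on the compact space $(\tilde{W},\delta_{\square})$ supplied by Proposition~\ref{comp}. Since $\PP_{n,1/2}$ assigns mass $2^{-\binom{n}{2}}$ to each $G\in\cG_n$, we can rewrite
\begin{equation*}
\psi_n(\vec{\theta}) = \frac{\binom{n}{2}\log 2}{n^2} + \frac{1}{n^2}\log\EE_{n,1/2}\big[\eee^{n^2\vec{\theta}\cdot\vec{T}(G)}\big],
\end{equation*}
so the deterministic prefactor contributes $\tfrac12\log 2$ in the limit and the task reduces to identifying the limit of the second term.

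To apply Varadhan's lemma I would verify that $\tilde{h}\mapsto\vec{\theta}\cdot\vec{T}(\tilde{h})$ is bounded and continuous on $(\tilde{W},\delta_{\square})$: boundedness is immediate because each coordinate $t(F_k,\tilde{h})\in[0,1]$, and continuity follows from Proposition~\ref{Lip}, which even yields a Lipschitz estimate. Combined with the pushforward LDP on $(\tilde{W},\delta_{\square})$ with rate $I_{1/2}$ from Theorem~\ref{th:LDP} and compactness from Proposition~\ref{comp}, Varadhan's lemma gives
\begin{equation*}
\lim_{n\to\infty}\frac{1}{n^2}\log\EE_{n,1/2}\big[\eee^{n^2\vec{\theta}\cdot\vec{T}(G)}\big] = \sup_{\tilde{h}\in\tilde{W}}\big(\vec{\theta}\cdot\vec{T}(\tilde{h})-I_{1/2}(\tilde{h})\big).
\end{equation*}
Finally, integrating the pointwise identity $I_{1/2}(u)=I(u)+\tfrac12\log 2$ recorded in \eqref{eq: rate function} over $[0,1]^2$ gives $I_{1/2}(\tilde{h})=I(\tilde{h})+\tfrac12\log 2$, so the two constants $\pm\tfrac12\log 2$ cancel and the claimed identity \eqref{eq: CD variational formula} emerges.

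The main obstacle is making Varadhan's lemma rigorous on the quotient space, which I would do by proving matching upper and lower bounds. For the upper bound I would cover the compact space $\tilde{W}$ by finitely many $\delta_{\square}$-balls on each of which $\vec{\theta}\cdot\vec{T}$ varies by at most $\epsilon$ (using Proposition~\ref{Lip}), apply the LDP upper bound to each closed ball, take the finite maximum, and send $\epsilon\downarrow 0$. For the lower bound, for each near-optimiser $\tilde{h}^*$ I would choose a small open $\delta_{\square}$-neighbourhood on which $\vec{\theta}\cdot\vec{T}$ decreases by at most $\epsilon$, restrict the expectation to this set, and combine the LDP lower bound with lower semicontinuity of $I_{1/2}$ from Proposition~\ref{RFprop}. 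A secondary subtlety is that the LDP is stated for the pushforward measure on all of $\tilde{W}$ rather than on the discrete subset $\{\tilde{h}^G\colon G\in\cG_n\}$ of realised graphon images, but this is harmless because the expectation that appears in $\psi_n$ is automatically a pushforward integral under $G\mapsto\tilde{h}^G$, which is precisely the measure controlled by Theorem~\ref{th:LDP}.
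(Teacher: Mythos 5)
Your proof is correct, and it is essentially the argument from Chatterjee and Diaconis~\cite{CD13}: the paper here simply cites the result without reproducing its proof, and the original proof proceeds exactly as you describe, by writing $\psi_n(\vec{\theta})$ as $\tfrac12\log 2 + o(1)$ plus a scaled log-moment generating function under $\PP_{n,1/2}$, applying Varadhan's lemma with the Chatterjee--Varadhan LDP and the compactness of $(\tilde{W},\delta_\square)$, and cancelling the two copies of $\tfrac12\log 2$ coming from the uniform normalisation and from the identity $I_{1/2}=I+\tfrac12\log 2$. The only cosmetic caveat is that Proposition~\ref{Lip} as stated covers pairs of finite graphs; the Lipschitz bound you actually invoke is the graphon version (the counting lemma), which is standard but not literally the cited proposition.
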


\begin{theorem}[Chatterjee and Diaconis \cite{CD13}] 
\label{thm: CD variational simplify}
Let $F_1,\hdots,F_m$ be subgraphs as defined in Section~{\rm \ref{S2.1}}. 
Suppose that $\theta_2,\hdots,\theta_m \geq 0$. Then
\begin{equation}
\label{eq: CD variational simplify}
\lim_{n\rightarrow\infty}\psi_n(\vec{\theta}) 
= \sup_{0\leq u\leq 1} \left(\sum_{i=1}^m\theta_i\, u^{E(F_k)} - I(u)\right),
\end{equation}
where $E(F_k)$ denotes the number of edges in the subgraph $F_k$.
\end{theorem}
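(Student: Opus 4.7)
The plan is to deduce \eqref{eq: CD variational simplify} from Theorem \ref{thm: CD partition function}, which already supplies the graphon variational representation
\[
\lim_{n\to\infty}\psi_n(\vec\theta) = \sup_{\tilde h \in \tilde W}\Phi(\tilde h), \qquad \Phi(\tilde h) := \sum_{k=1}^m \theta_k\, t(F_k, \tilde h) - I(\tilde h).
\]
The inequality $\geq$ in \eqref{eq: CD variational simplify} is immediate: restricting the supremum to constant graphons $h \equiv u$ gives $t(F_k, h) = u^{E(F_k)}$ by \eqref{graphon density} and $I(h) = I(u)$ by \eqref{eq: rate 2}. The substance of the theorem lies in the reverse inequality $\leq$, where the sign assumption $\theta_k \geq 0$ for $k \geq 2$ enters.

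For $\leq$, I would first confirm that the supremum is attained. Compactness of $(\tilde W, \delta_\square)$ (Proposition \ref{comp}), continuity of each $t(F_k, \cdot)$ (Proposition \ref{Lip}), and lower semi-continuity of $I$ (Proposition \ref{RFprop}) together make $\Phi$ upper semi-continuous on a compact space, so a maximiser $\tilde h^*$ exists. The task then reduces to exhibiting some $u^\star \in [0,1]$ with $\Phi(\tilde h^*) \leq \sum_k \theta_k (u^\star)^{E(F_k)} - I(u^\star)$. A naive attempt is to take $u^\star = \int h^*$ and use Jensen's inequality for the convex $I$ to get $I(h^*) \geq I(u^\star)$, but this strategy does not close: for the disjoint-union-of-two-cliques graphon one has $u = 1/2$, $I(h) = 0$, and $t(K_3, h) = 1/4 > (1/2)^3 = 1/8$, so for $\theta_3$ large the value $\Phi(h)$ strictly exceeds $\Phi(1/2)$, defeating any pointwise comparison at the same edge density.

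The main obstacle is therefore the need for a genuine graph-theoretic input rather than pure soft analysis. I would appeal to the extremal analysis in Chatterjee and Diaconis~\cite{CD13}: under the sign assumption $\theta_k \geq 0$ for $k \geq 2$, a combination of H\"older-type inequalities applied to the multilinear functionals $t(F_k, \cdot)$ with the strict convexity of $I$ yields a bound of exactly the required form, where $u^\star$ is typically not $\int h^*$ but is instead determined by the Euler--Lagrange equation for the reduced one-dimensional problem $u \mapsto \sum_k \theta_k u^{E(F_k)} - I(u)$. Once this quantitative comparison is in place, the two directions fit together to give \eqref{eq: CD variational simplify}.
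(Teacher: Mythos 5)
The paper does not prove Theorem~\ref{thm: CD variational simplify}: it is quoted from Chatterjee and Diaconis~\cite{CD13} without proof, so there is no in-paper argument to compare against. Your $\geq$ direction (restriction to constant graphons) is correct, and your two-cliques counterexample correctly shows that comparing $h$ to the constant graphon with edge density $\int h$ cannot close the argument, which is a genuine observation. Where the proposal falls short is the $\leq$ direction, which you defer to ``a combination of H\"older-type inequalities \ldots\ with the strict convexity of $I$'': this is vague, and the additional machinery you invoke is not actually how the gap closes. The mechanism in~\cite{CD13} is a single elementary inequality,
\begin{equation*}
t(F,h) \;\leq\; \int_{[0,1]^2} h(x,y)^{E(F)}\,\dd x\,\dd y \qquad \text{for every } h\in W,
\end{equation*}
which follows by applying AM--GM in the form $\prod_{j=1}^{e} a_j \leq \tfrac{1}{e}\sum_{j=1}^{e} a_j^{\,e}$ to the integrand $\prod_{(i,j)\in E(F)} h(x_i,x_j)$ and noting that each summand integrates to $\int h^{E(F)}$. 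With $\theta_2,\ldots,\theta_m\geq 0$ (so the inequality does not flip sign) and $E(F_1)=1$ (so the edge term is an identity regardless of the sign of $\theta_1$), this yields a pointwise reduction:
\begin{equation*}
\sum_{k=1}^m \theta_k\, t(F_k,h) - I(h) \;\leq\; \int_{[0,1]^2}\Bigl[\textstyle\sum_{k=1}^m \theta_k\, h(x,y)^{E(F_k)} - I(h(x,y))\Bigr]\dd x\,\dd y \;\leq\; \sup_{u\in[0,1]}\Bigl[\textstyle\sum_{k=1}^m \theta_k u^{E(F_k)} - I(u)\Bigr].
\end{equation*}
In particular, no Euler--Lagrange analysis of the reduced one-dimensional problem is needed, no single comparison point $u^\star$ is ever produced (the bound is pointwise in $(x,y)$), and the convexity of $I$ plays no role. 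So while your ``H\"older-type'' instinct points in roughly the right direction, the proposal does not identify the actual key inequality and attributes the reduction to tools that are not part of the argument.
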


The key result in this section is the following variational formula for $s_\infty$ defined 
in Definition~\ref{def:ensequiv}. Recall that for $n\in\N$ we write $\vec{\theta}^*$ for 
$\vec{\theta}^*_n$.

\begin{theorem}
\label{th:Limit}
Consider the microcanonical ensemble defined in \eqref{eq:PM} with constraint 
$\vec{T}=\vec{T}^*$ defined in \eqref{operator}, and the canonical ensemble defined in 
\eqref{eq:CPD}--\eqref{eq:PF} with parameter $\vec{\theta}=\vec{\theta}^*$ such that, 
for every $n\in\mathbb{N}$, \eqref{softconstraint2}, \eqref{eq: Assumption T} and \eqref{eq:Assumption} hold. Then
\begin{equation}
\label{varreprsinfty}
s_\infty = \lim_{n\to\infty} \frac{1}{n^2}\,S_n(\Pmic \mid \Pcan) 
= \sup_{\tilde{h}\in \tilde{W}} \big[\vec{\theta}^*_\infty\cdot\vec{T}(\tilde{h})-I(\tilde{h})\big]
-\sup_{\tilde{h}\in \tilde{W}^*} \big[\vec{\theta}^*_\infty\cdot\vec{T}(\tilde{h}) - I(\tilde{h})\big],
\end{equation}
where $I$ is defined in \eqref{eq: rate function} and $\tilde{W}^*=\{\tilde{h}\in\tilde{W}
\colon\,\vec{T}(\tilde{h}) = \vec{T}^*_\infty\}$.  
\end{theorem}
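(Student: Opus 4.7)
The plan is to start from the simplification \eqref{eq:KL2}, which says that for \emph{any} $G^* \in \cG_n$ realising the hard constraint,
\begin{equation*}
S_n(\Pmic \mid \Pcan)
= \log\frac{\Pmic(G^*)}{\Pcan(G^*)}
= -\log \Omega_{\vec{T}^*} - n^2\,\vec{\theta}^*\cdot\vec{T}^* + n^2\,\psi_n(\vec{\theta}^*),
\end{equation*}
using the explicit forms of $\Pmic$ and $\Pcan$ in \eqref{eq:PM}, \eqref{eq:CPD} and the fact that $\vec{T}(G^*)=\vec{T}^*$. Dividing by $n^2$ and taking $n\to\infty$ reduces the theorem to analysing three scalar limits, and then absorbing one of them into the variational expression on $\tilde{W}^*$.

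First I would dispose of the easy middle term: $\vec{\theta}^*\cdot\vec{T}^*\to\vec{\theta}^*_\infty\cdot\vec{T}^*_\infty$ is immediate from \eqref{eq: Assumption T} and \eqref{eq:Assumption}. Next, for the partition-function term I would combine Theorem~\ref{thm: CD partition function} (which gives $\psi_n(\vec{\theta})\to\sup_{\tilde h\in\tilde W}[\vec{\theta}\cdot\vec{T}(\tilde h)-I(\tilde h)]$ for a fixed $\vec{\theta}$) with a uniform Lipschitz estimate to pass to the $n$-dependent parameter $\vec{\theta}^*=\vec{\theta}^*_n$. The Lipschitz estimate is straightforward: since $\vec{T}$ takes values in $[0,1]^m$, the Hamiltonian in \eqref{eq:HF} is Lipschitz in $\vec{\theta}$ with constant $\sqrt m$, hence so is $\psi_n$ uniformly in $n$; using \eqref{eq:Assumption} this yields $\psi_n(\vec{\theta}^*_n)\to\sup_{\tilde h\in\tilde W}[\vec{\theta}^*_\infty\cdot\vec{T}(\tilde h)-I(\tilde h)]$.

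The third and most delicate term is the counting limit
\begin{equation*}
\lim_{n\to\infty}\frac{\log\Omega_{\vec{T}^*_n}}{n^2}
= -\inf_{\tilde h\in\tilde W^*} I(\tilde h),
\qquad \tilde W^*=\{\tilde h\in\tilde W\colon\vec{T}(\tilde h)=\vec{T}^*_\infty\}.
\end{equation*}
Here I would apply Corollary~\ref{Cor: Count} after sandwiching the exact level set $\{G\colon\vec{T}(G)=\vec{T}^*_n\}$ between a small closed ball and (an inflated version of) an open ball around $\vec{T}^*_\infty$ in graphon space, exploiting that $T$ is $\delta_\square$-continuous (Proposition~\ref{Lip}) and $\vec{T}^*_n\to\vec{T}^*_\infty$. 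The upper bound is routine (contain the exact level set in a closed $\varepsilon$-neighbourhood and apply the closed-set bound of Theorem~\ref{th:LDP}). The lower bound is where the main obstacle sits: the exact level $\vec{T}^*_n$ is a measure-zero slice, and a naive LDP lower bound gives nothing. I would resolve this by using the standing assumption that $\vec{T}^*_n$ is graphic together with a perturbation/realisation argument (as in Chatterjee~\cite{C16}) to convert "approximate" realisers of $\vec{T}^*_\infty$ into exact realisers of $\vec{T}^*_n$, letting $\varepsilon\downarrow 0$ after $n\to\infty$ and using lower semi-continuity of $I$ (Proposition~\ref{RFprop}) together with the compactness of $\tilde W$ (Proposition~\ref{comp}) to conclude that the $\varepsilon$-infima converge to $\inf_{\tilde W^*} I$.

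Finally, I would assemble the three limits:
\begin{equation*}
s_\infty
= \inf_{\tilde h\in\tilde W^*}\! I(\tilde h)
-\vec{\theta}^*_\infty\!\cdot\vec{T}^*_\infty
+\sup_{\tilde h\in\tilde W}\big[\vec{\theta}^*_\infty\!\cdot\vec{T}(\tilde h)-I(\tilde h)\big].
\end{equation*}
Since every $\tilde h\in\tilde W^*$ satisfies $\vec{T}(\tilde h)=\vec{T}^*_\infty$, the inner product $\vec{\theta}^*_\infty\cdot\vec{T}(\tilde h)$ is constant on $\tilde W^*$, giving
\begin{equation*}
\sup_{\tilde h\in\tilde W^*}\!\big[\vec{\theta}^*_\infty\!\cdot\vec{T}(\tilde h)-I(\tilde h)\big]
= \vec{\theta}^*_\infty\!\cdot\vec{T}^*_\infty - \inf_{\tilde h\in\tilde W^*} I(\tilde h),
\end{equation*}
and substituting this back yields exactly \eqref{varreprsinfty}. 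The only non-routine step in this whole strategy is the exact-level-set counting; everything else is continuity plus LDP bookkeeping.
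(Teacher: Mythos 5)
Your proposal follows essentially the same route as the paper's proof: decompose $S_n(\Pmic\mid\Pcan)$ via \eqref{eq:KL2} into the counting term $-\log\Omega_{\vec T^*}$, the linear term $-n^2\vec\theta^*\cdot\vec T^*$, and the free-energy term $n^2\psi_n(\vec\theta^*)$, handle the last via Theorem~\ref{thm: CD partition function} plus a uniform-in-$n$ Lipschitz/mean-value bound for $\psi_n$ (the paper places this in Appendix~\ref{app}), and convert the counting term to $\inf_{\tilde W^*}I$ via the LDP. One minor remark: you are more explicit than the paper about the genuine subtlety in the lower bound for $\log\Omega_{\vec T^*}/n^2$ (the exact level set can have empty interior so the open-set LDP bound does not apply directly), which the paper passes over by citing closedness of $\tilde W^*$ and compactness of $\tilde W$; your sketch of a perturbation argument à la Chatterjee is the right idea to close that gap, and spelling it out would strengthen the argument rather than change its structure.
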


\begin{proof}
From \eqref{eq:KL2} we have
\begin{equation}
\label{eq:SPrel}
s_{\infty} = \lim_{n\to\infty} \frac{1}{n^2} \big[\log\,\Pmic(G^*) -\log\, \Pcan(G^*)\big],
\end{equation}
where $G^*$ is any graph in $\cG_n$ such that $\vec{T}(G^*) = \vec{T}^*$. For the 
microcanonical ensemble we have
\begin{equation}
\label{eq: MIC}
\log\Pmic(G^*) = - \log\Omega_{\vec{T}^*}
= -\log\PP_{\frac{1}{2},n}\left(\{G\in\cG_n\colon\, \vec{T}(G) 
= \vec{T}^*\}\right) - {n\choose2}\log2,
\end{equation}
where 
\begin{equation}
\Omega_{\vec{T}^*} = | \{G \in \cG_n\colon\, \vec{T}(G) = \vec{T}^* \} | > 0.
\end{equation}
Define the operator $\vec{T}\colon\,W \to \R^m$, $h\mapsto (t(F_k,h))_{k=1}^m$. This operator 
can be extended to an operator (with a slight abuse of notation again denoted by $\vec{T}$) 
on the quotient space $(\tilde{W},\delta_{\square})$ by defining $\vec{T}(\tilde{h})=\vec{T}(h)$ 
with $h\in\tilde{h}$. Define the following sets
\begin{equation} 
\tilde{W}^* := \big\{\tilde{h}\in\tilde{W}\colon\,T(\tilde{h}) = \vec{T}^*_\infty\big\}, \qquad 
\tilde{W}^*_n :=\big\{\tilde{h}\in \tilde{W}^*\colon\, \tilde{h}=\tilde{h}^G \text{ for some } 
G \in \cG_n\big\}.
\end{equation}
From the continuity of the operator $\vec{T}$ on $\tilde{W}$, we see that $\tilde{W}^*$ is a 
compact subspace of $\tilde{W}$, and hence is also closed. From Theorem~\ref{Lip} we have 
that $\vec{T}$ is a Lipschitz continuous operator on the space $(\tilde{W},\delta_{\square})$. 
Since $\tilde{W}$ is a compact space, we have that 
\begin{equation}
\limn\frac{1}{n^2}\,\log\PP_{\frac{1}{2},n}\left(\{G\in\cG_n\colon\, \vec{T}(G) = \vec{T}^*\}\right) 
= -\inf_{\tilde{h}\in\tilde{W}^*} I_{\frac{1}{2}}(\tilde{h})
= -\inf_{\tilde{h}\in\tilde{W}^*} I(\tilde{h})-\tfrac12\log2.
\end{equation}
The large deviation principle applied to \eqref{eq: MIC} yields 
\begin{equation}
\label{eq: MIC2}
\limn \frac{1}{n^2}\,\log\Pmic(G^*) = \inf_{\tilde{h}\in\tilde{W}^*} I(\tilde{h}).
\end{equation}

Consider the canonical ensemble and a graph $G_n^*$ on $n$ vertices such that $\vec{T}(G_n^*) 
= \vec{T}^*$. By Definition \ref{DefHom}, Proposition \ref{QSC}, and \eqref{eq: Assumption T} we may 
suppose that $(G_n^*)_{n\in\N}$ is left-convergent and converges to the graphon $h^*$. Since $\vec{T}$ 
is continuous, we have that $\vec{T}(G_n^*)$ converges to $\vec{T}(h^*)=\vec{T}^*_\infty$. From 
\eqref{eq:CPD} we have that 
\begin{equation}
\limn\frac{1}{n^2}\, \log\Pcan(G_n^*) = \vec{\theta}^*_\infty\cdot\vec{T}^*_\infty 
- \psi_\infty(\vec{\theta}^*_{\infty}).
\end{equation}
By Theorem \ref{thm: CD partition function},
\begin{equation}
\label{eq: CAN2}
\psi_\infty(\vec{\theta}^*_{\infty})
= \sup_{\tilde{h}\in \tilde{W}} \big[\vec{\theta}^*_\infty\cdot\vec{T}(\tilde{h})-I(\tilde{h})\big].
\end{equation}
There is an additional subtlety in proving \eqref{eq: CAN2} in our setup because $\vec{\theta}^*$ 
depends on $n$. This dependence is treated in Appendix A. Combining \eqref{eq: MIC2} and 
\eqref{eq: CAN2}, we get 
\begin{equation}
\label{eq: CAN3}
s_\infty = \lim_{n\to\infty} \frac{1}{n^2}\, S_n(\Pmic \mid \Pcan) 
= \inf_{\tilde{h}\in\tilde{W}^*} I(\tilde{h})-\vec{\theta}^*_\infty\cdot\vec{T}^*_\infty
+ \sup_{\tilde{h}\in \tilde{W}} \big[\vec{\theta}^*_\infty\cdot\vec{T}(\tilde{h})-I(\tilde{h})\big].
\end{equation}
By definition all elements $\tilde{h}\in\tilde{W}^*$ satisfy $\vec{T}(\tilde{h}) = \vec{T}^*_\infty$.
Hence the expression in the right-hand side of (\ref{eq: CAN3}) can be written as 
\begin{equation}
\sup_{\tilde{h}\in \tilde{W}} \big[\vec{\theta}^*_\infty\cdot\vec{T}(\tilde{h})-I(\tilde{h})\big]
-\sup_{\tilde{h}\in \tilde{W}^*}\big[\vec{\theta}^*_\infty\cdot\vec{T}(\tilde{h}) - I(\tilde{h})\big],
\end{equation}
which settles the claim.
\end{proof}

\begin{remark}
{\rm Theorem~\ref{th:Limit} and the compactness of $\tilde{W}^*$ give us a \emph{variational 
characterisation} of ensemble equivalence: $s_\infty = 0$ if and only if at least one of the 
maximisers of $\vec{\theta}^*_\infty\cdot\vec{T}(\tilde{h})-I(\tilde{h})$ in $\tilde{W}$ also lies 
in $\tilde{W}^* \subset \tilde{W}$. Equivalently, $s_{\infty}=0$ when at least one the maximisers 
of $\vec{\theta}^*_\infty\cdot\vec{T}(\tilde{h})-I(\tilde{h})$ satisfies the hard constraint.} \hfill\qed
\end{remark}


\section{Main theorem}
\label{S3}

The variational formula for the relative entropy $s_\infty$ in Theorem~\ref{th:Limit} allows us to 
identify examples where ensemble equivalence holds ($s_\infty=0$) or is broken ($s_\infty>0$). 
We already know that if the constraint is on the edge density alone, i.e., $T(G) = t(F_1,G) = T^*$, 
then $s_\infty=0$ (see Garlaschelli \emph{et al.}~\cite{GHR17}). In what follows we will look at three 
models: 

\begin{figure}[htbp]
\centering
\hspace{1.5cm}\includegraphics[width=0.7\linewidth, height=4.6cm]{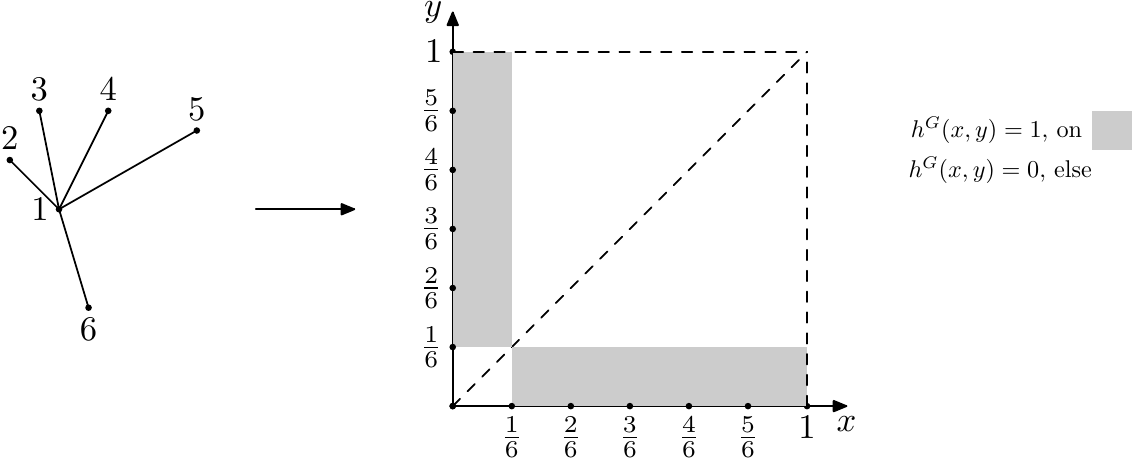}
\caption{\small A 5-star graph and its graphon representation.} 
\label{fig-5star}
\end{figure}

\begin{itemize}
\item[(I)]
The constraint is on the triangle density, i.e., $\vec{T_2}(G) = t(F_3,G) = T_2^*$ with 
$F_3$ the triangle. This will be referred to as the \emph{Triangle Model}.
\item[(II)]
The constraint is on the edge density and triangle density, i.e., $\vec{T}(G) = (t(F_1,G),$
$t(F_3,G)) = (T_1^*,T_2^*)$ with $F_1$ the edge and $F_3$ the triangle. This will be referred 
to as the \emph{Edge-Triangle Model}.
\item[(III)]
The constraint is on the $j$-star density, i.e., $\vec{T}(G) = t(T[j],G)=T[j]^*$ with $T[j]$ the $j$-star 
graph, consisting of 1 root vertex and $j \in \N\setminus\{1\}$ vertices connected to the root but 
not connected to each other (see Fig.~\ref{fig-5star}). This will be referred to as the \emph{Star 
Model}.
\end{itemize}

\noindent
For a graphon $h\in W$ (recall \eqref{graphon density}), the edge density and the triangle density 
equal
\begin{equation}
\label{eq: densityalt}
T_1(h) = \int_{[0,1]^2} \dd x_1 \dd x_2\, h(x_1,x_2), \quad 
T_2(h)= \int_{[0,1]^3} \dd x_1 \dd x_2\dd x_3\, h(x_1,x_2)h(x_2,x_3)h(x_3,x_1),
\end{equation} 
while the $j$-star density equals
\begin{equation}
\label{eq: densitystar}
T[j](h) = \int_{[0,1]} \dd x \int_{[0,1]^j} \dd x_1 \dd x_2 \cdots \dd x_j\, 
\prod_{i=1}^j h(x,x_i).
\end{equation}

\begin{theorem} 
\label{thm:equivalence}
For the above three types of constraint: 
\begin{itemize}
\item[{\rm (I)}] 
\begin{itemize}
\item[{\rm (a)}]
If $T_2^*\geq \tfrac18$ , then $s_{\infty} = 0$.
\item[{\rm (b)}]
If $T_2^* = 0$, then $s_{\infty} = 0$. 
\end{itemize}
\item[{\rm (II)}]
\begin{itemize}
\item[{\rm (a)}]
If $T_2^* = T_1^{*{3}}$, then $s_{\infty} = 0$.
\item[{\rm (b)}]
If $T_2^* \neq T_1^{*{3}}$ and $T_2^* \geq \tfrac18$, then $s_{\infty} > 0$.
\item[{\rm (c)}]
If $T_2^*\neq T_1^{*{3}}$, $0<T_1^*\leq\tfrac12$ and $0<T_2^*<\frac{1}{8}$, 
then $s_{\infty}>0$. 
\item[{\rm (d)}]
If $T_1^* = \tfrac12 +\epsilon$ with $\epsilon\in\left(\frac{\ell-2}{2\ell},\frac{\ell-1}{2\ell+2}\right)$, 
$\ell \in \N\setminus\{1\}$, and $T_2^*$ is such that $(T_1^*,T_2^*)$ lies on the scallopy curve
in Fig.~{\rm \ref{fig-scallopy}}, then $s_{\infty}>0$. 
\item[{\rm (e)}] 
If $0<T_1^* \leq \frac{1}{2}$ and $T_2^* = 0$, then $s_{\infty}=0$.
\end{itemize}
\item[{\rm (III)}]
For every $j \in \N\setminus \{1\}$, if $T[j]^* \geq 0$, then $s_{\infty}=0$.
\end{itemize}

\end{theorem}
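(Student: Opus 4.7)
The overall strategy is to apply the variational characterization in Theorem~\ref{th:Limit}: by the remark that follows it, $s_\infty = 0$ if and only if some maximizer of
\[
F(\tilde{h}) := \vec{\theta}^*_\infty \cdot \vec{T}(\tilde{h}) - I(\tilde{h})
\]
on $\tilde{W}$ also lies in $\tilde{W}^*$, while $s_\infty > 0$ if every maximizer of $F$ has $\vec{T}(\tilde{h}) \neq \vec{T}^*_\infty$. For each of the three models I would first locate the maximizer(s) of $F$ on $\tilde{W}$, and then compare their $\vec{T}$-value with $\vec{T}^*_\infty$.

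The cleanest case is Part~(III), the Star Model. Writing $d(x) := \int_0^1 h(x,y)\,dy$ for the degree function, one has $T[j](h) = \int_0^1 d(x)^j\,dx$, and Jensen's inequality in the $y$-variable yields $\int_0^1 I(h(x,y))\,dy \geq I(d(x))$, hence $I(h) \geq \int_0^1 I(d(x))\,dx$. For any $\theta \in \R$,
\[
\theta\,T[j](h) - I(h) \leq \int_0^1 \bigl[\theta d(x)^j - I(d(x))\bigr]\,dx
\leq \sup_{u\in[0,1]}\bigl[\theta u^j - I(u)\bigr],
\]
and the constant graphon $h \equiv u^*$ achieves the outer supremum. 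Since the soft constraint forces $(u^*)^j = T[j]^*_\infty$, the constant maximizer automatically satisfies the hard constraint, giving $s_\infty = 0$ for every $T[j]^* \geq 0$.

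For Parts~(I) and~(II) I would invoke the fine structure of maximizers known for exponential random graphs with edge and triangle constraints (Chatterjee--Diaconis, Lubetzky--Zhao, Radin--Sadun, Kenyon--Radin--Ren--Sadun). In cases claiming $s_\infty = 0$ (namely (I)(a), (I)(b), (II)(a), (II)(e)), the task is to exhibit a maximizer of $F$ that lies in $\tilde{W}^*$; in cases claiming $s_\infty > 0$ (namely (II)(b), (II)(c), (II)(d)), the task is to show that every maximizer of $F$ violates the hard constraint. Whenever the triangle component of $\vec{\theta}^*_\infty$ is non-negative, Theorem~\ref{thm: CD variational simplify} reduces the sup to constant graphons, producing the constant maximizer $u = (T_2^*)^{1/3}$ in (I)(a) and $u = T_1^* = (T_2^*)^{1/3}$ in (II)(a); in (II)(b), the same reduction yields a constant maximizer whose values $(u, u^3)$ cannot match $(T_1^*, T_2^*)$ off the Erd\H{o}s--R\'enyi curve. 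Case (II)(c) requires the Lubetzky--Zhao description of the small-triangle (repulsive) phase and the consequent mismatch with the constraint. For the triangle-free cases (I)(b) and (II)(e), extremal graph theory (Tur\'an-type) identifies bipartite graphons, of density $1/2$ or $T_1^*$ respectively, as simultaneous maximizers of $F$ and elements of $\tilde{W}^*$. Finally, (II)(d) relies on the multipodal maximizers of Kenyon--Radin--Ren--Sadun along the scallopy curve.

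The hardest step will be (II)(d): one must import the multipodal phase structure on the scallopy curve, verify uniqueness (modulo the action of $\Sigma$) of the maximizer at a generic scallopy point, and show that its density vector lies strictly off $(T_1^*, T_2^*)$, so that the two suprema in Theorem~\ref{th:Limit} are strictly unequal. A secondary difficulty is (I)(b): since the Lagrange multiplier may satisfy $\theta^*_n \to -\infty$, one must either pass to subsequences as allowed by Remark~\ref{rem:LMn} or truncate $\theta_2^*$ and argue by continuity, and separately characterize the extremal triangle-free graphon as bipartite so that the hard constraint $T_2 = 0$ is realized at a maximizer of $F$.
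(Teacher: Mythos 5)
Your overall strategy---deploy the variational characterisation of Theorem~\ref{th:Limit} and decide equivalence by comparing the maximisers of $F(\tilde{h})=\vec{\theta}^*_\infty\cdot\vec{T}(\tilde{h})-I(\tilde{h})$ against the constraint set $\tilde{W}^*$---is the same framework the paper uses, and your arguments for (III), (I)(a), (II)(a), (II)(b), (II)(d) track the paper's proofs closely (your Jensen argument for the Star Model is essentially the paper's Section~\ref{S5.9}, and your appeal to the multipodal scallopy-curve structure matches the paper's Section~\ref{S5.7}, which invokes Radin--Sadun and the Chatterjee--Diaconis Euler--Lagrange/boundedness criterion).

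There is, however, a genuine gap in your treatment of the two cases with $T_2^*=0$, namely (I)(b) and (II)(e). You propose to realise these through the variational formula and to handle the divergence $\theta^*_n\to-\infty$ by passing to subsequences or truncating; but the paper's framework (Theorem~\ref{th:Limit}, the Appendix) explicitly requires $\vec{T}^*$ to be bounded away from $0$, and there is no finite limiting multiplier to feed into $F$. The paper instead bypasses the variational machinery entirely: since $T_2(G)\ge 0$ for every $G$, the \emph{soft} constraint $\langle T_2\rangle=0$ forces $\Pcan(G)=0$ on every graph with $T_2(G)>0$. In case (I)(b) this makes $\Pcan$ uniform on triangle-free graphs, i.e., identical to $\Pmic$, so $S_n(\Pmic\mid\Pcan)=0$ for every $n$; in case (II)(e) it collapses the model to an edge-only exponential model restricted to $\cG_n^0$, which is then resolved via Chatterjee--Diaconis. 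Your characterisation of the extremal triangle-free graphon as bipartite captures the microcanonical side but misses that the canonical side degenerates in the same way, which is what actually makes the proof go through. Finally, for (II)(c) you propose importing the Lubetzky--Zhao replica-symmetry description of the low-triangle phase; the paper instead runs a short direct computation: lower-bound the first supremum in \eqref{varreprsinfty} by evaluating at the constant graphon $T_1^*$, note that $\theta_2^*<0$ (Lemma~\ref{lem: fixed n sign}) gives $\theta_2^*(T_1^{*3}-T_2^*)>0$ when $T_2^*>T_1^{*3}$ (and symmetrically when $T_2^*\le T_1^{*3}$), and bound $\inf_{\tilde{W}^*}I$ below by $I(T_1^*)$ via Jensen. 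That route is more elementary and self-contained than invoking the phase description; both are viable, but you should be aware the paper's argument avoids the heavier machinery.
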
 
Here, $T_1^*,T_2^*,T[j]^*$ are in fact the limits $T_{1,\infty}^*,T_{2,\infty}^*,T[j]^*_\infty$ in 
\eqref{eq: Assumption T}, but in order to keep the notation light we now also suppress the 
index $\infty$.

\begin{figure}[htbp]
\centering
\hspace{1.5cm}\includegraphics[width=0.7\linewidth,height=7cm]{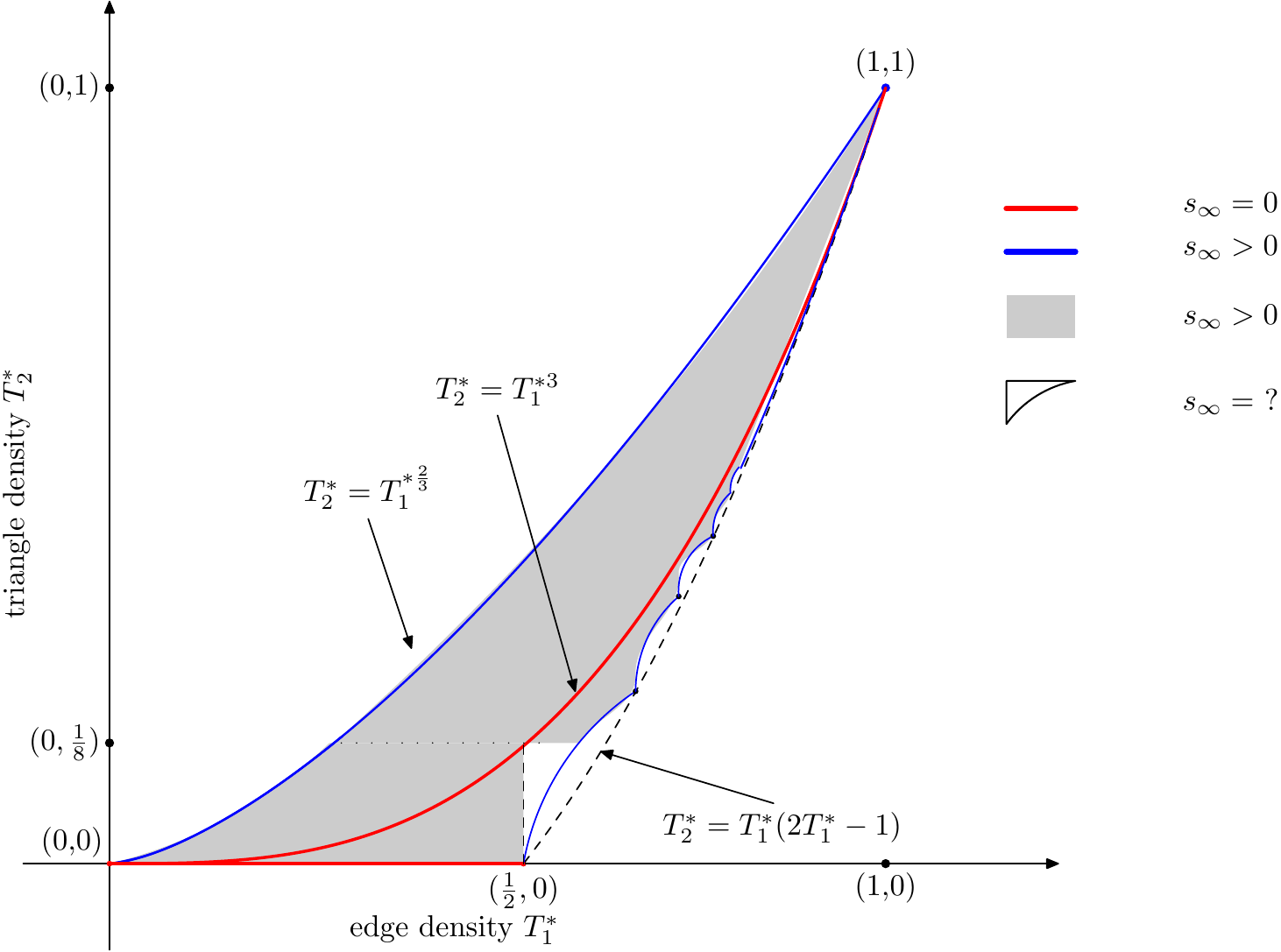}
\caption{\small The admissible edge-triangle density region is the region on and 
between the blue curves (cf.\ Radin and Sadun~\cite{RS15}).}
\label{fig-scallopy}
\end{figure}

\noindent
Theorem~\ref{thm:equivalence}, which states our main results on ensemble equivalence 
and which is proven in Sections~\ref{S4}--\ref{S5}, is illustrated in Fig.~\ref{fig-scallopy}. 
The region on and between the blue curves corresponds to the set of all realisable graphs: 
if the pair $(e,t)$ lies in this region, then there exists a graph with edge density $e$ and 
triangle density $t$. The red curves represent ensemble equivalence, the blue curves and 
the grey region represent breaking of ensemble equivalence, while in the white region 
between the red curve and the lower blue curve we do not know what happens.
Breaking of ensemble equivalence arises from frustration between the edge and the triangle density. 

Each of the cases in Theorem~\ref{thm:equivalence} corresponds to typical behaviour of
graphs drawn from the two ensembles: 
\begin{itemize}
\item
In cases (I)(a) and (II)(a), graphs drawn from both ensembles are asymptotically like Erd\H{o}s-R\'enyi 
random graphs with parameter $p=T_2^{*1/3}$.
\item
In cases (I)(b) and (II)(e), almost all graphs drawn from both ensembles are asymptotically like bipartite graphs. 
\item
In cases (II)(b), (II)(c) and (II)(d), we do not know what graphs drawn from the canonical ensemble look like.
Graphs drawn from the microcanonical ensemble do not look like Erd\H{o}s-R\'enyi random graphs. 
The structure of graphs drawn from the microcanonical ensemble when the constraint is as in (II)(d) 
has been determined in Pirkhurko and Razborov~\cite{PR12} and Radin and Sadun~\cite{RS15}. The vertex 
set of a graph drawn from the microcanonical ensemble can be partitioned into $\ell$ subsets: the first 
$\ell-1$ have size $\lfloor cn \rfloor$ and the last has size between $\lfloor cn\rfloor$ and $2\lfloor cn \rfloor$, 
where $c$ is a known constant depending on $\ell$. The graph has the form of a complete $\ell-$partite 
graph on these pieces, plus some additional edges in the last piece that create no additional triangles. 
\item
In case (III), graphs drawn from both ensembles are asymptotically like Erd\H{o}s-R\'enyi random graphs 
with parameter $p = T[j]^{*1/j}$.
\end{itemize}

\begin{remark}
{\rm Similar results hold for the Edge-Wedge-Triangle Model and the Edge-Star Model.} 
\hfill \qed
\end{remark}

Here are three open questions:
\begin{itemize}
\item
Identify in which cases \eqref{eq: Assumption T} implies \eqref{eq:Assumption}.
\item
Is $s_\infty=0$ as soon as the constraint involves a \emph{single} subgraph count only? 
\item
What happens for subgraphs other than edges, wedges, triangles and stars? Is again
$s_\infty>0$ under appropriate frustration?
\end{itemize}


\section{Choice of the tuning parameter}
\label{S4}

The tuning parameter is to be chosen so as to satisfy the soft constraint \eqref{softconstraint2}, 
a procedure that in equilibrium statistical physics is referred to as the \emph{averaging principle}. 
Depending on the choice of constraint, finding $\vec{\theta}^*$ may not be easy, neither analytically 
nor numerically. In Section~\ref{S4.1} we investigate how $\vec{\theta}^*$ behaves as we vary 
$\vec{T}^*$ for fixed $n$. We focus on the Edge-Triangle Model (a slight adjustment yields the 
same results for the Triangle Model). In Section~\ref{S4.2} we investigate how averages under 
the canonical ensemble, like \eqref{softconstraint2}, behave when $n\to\infty$. Here we can treat 
general constraints defined in \eqref{operator}.

For the behaviour of our constrained models, the \emph{sign} of the coordinates of the tuning 
parameter $\vec{\theta}^*$ is of pivotal importance, both for a fixed $n\in\mathbb{N}$ and 
asymptotically (see Bhamidi \emph{et al.}~\cite{BBS11}, Chatterjee and Diaconis~\cite{CD13}, 
Radin and Yin~\cite{RY13}, and references therein). We must therefore carefully keep track of 
this sign. The key results in this direction are Lemmas \ref{lem:tunealt} and~\ref{lem: fixed n sign} 
below.


\subsection{Tuning parameter for fixed $n$}
\label{S4.1}
\begin{lemma}
\label{lem:tunealt}
Consider the Triangle Model with the constraint given by the triangle density $T_2^*$. For every $n$,
$\theta^* \geq 0$ if and only if $T_2^* \geq \tfrac{1}{8}$. 
\end{lemma}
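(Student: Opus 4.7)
The plan is to exploit strict convexity of the finite-$n$ log-partition function together with a one-line evaluation of $\langle t(F_3)\rangle_\theta$ at the free value $\theta=0$. Writing $\psi_n(\theta) = n^{-2}\log\sum_{G\in\cG_n} e^{n^2\theta\,t(F_3,G)}$ as in \eqref{eq:PF}, routine differentiation under the sum gives
$$\psi_n'(\theta)=\langle t(F_3)\rangle_\theta, \qquad \psi_n''(\theta)=n^2\,\mathrm{Var}_\theta\bigl(t(F_3,G)\bigr).$$
For $n\ge 3$ the function $t(F_3,\cdot)$ is non-constant on $\cG_n$ (e.g.\ it vanishes on the empty graph but equals $(1-1/n)(1-2/n)$ on $K_n$), so the variance is strictly positive and $\psi_n$ is strictly convex. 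This matches the uniqueness of $\theta^*$ stated after \eqref{eq:Ham} and shows that $\theta\mapsto\langle t(F_3)\rangle_\theta$ is strictly increasing, so the sign of $\theta^*$ coincides with the sign of $T_2^*-\langle t(F_3)\rangle_0$.

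Next I would evaluate this baseline directly. At $\theta=0$ the canonical measure is uniform on $\cG_n$, i.e.\ Erd\H{o}s--R\'enyi with $p=\tfrac12$. Counting the $n(n-1)(n-2)$ ordered triples of distinct vertices, each contributing $(1/2)^3$ to $\EE[\mathrm{hom}(F_3,G)]$, gives
$$\langle t(F_3)\rangle_0 = \frac{n(n-1)(n-2)}{8\,n^3} = \frac{(1-1/n)(1-2/n)}{8},$$
which tends to $\tfrac18$ as $n\to\infty$. Combined with monotonicity this yields $\theta^*\ge 0$ iff $T_2^*\ge(1-1/n)(1-2/n)/8$; identifying $T_2^*$ with its $n\to\infty$ limit $T_{2,\infty}^*$ (per the convention of Remark~\ref{rem:LMn}) collapses the threshold to the clean value $\tfrac18$ stated in the lemma.

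The main and essentially only obstacle is the $O(1/n)$ gap between the exact finite-$n$ baseline $(1-1/n)(1-2/n)/8$ and the asymptotic value $\tfrac18$: this is a bookkeeping point, immaterial to the convex-analytic heart of the argument, since both thresholds coincide in the asymptotic regime in which \eqref{eq: Assumption T} and \eqref{eq:Assumption} are framed. If one wanted to sidestep the direct differentiation (and prepare for less tractable generalisations), the same monotonicity follows from the FKG inequality applied to the log-supermodular density $G\mapsto e^{6\theta C_3(G)/n}$, since $C_3$ is supermodular in the edge configuration; this yields stochastic monotonicity of $\Pcan(\cdot\mid\theta)$ in $\theta$ and hence monotonicity of $\langle t(F_3)\rangle_\theta$, closing the argument in the same way.
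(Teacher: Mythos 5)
Your argument takes a genuinely different route from the paper's. The paper proves the companion Lemma~\ref{lem: fixed n sign} (and states that the present lemma follows ``similarly'') by introducing the auxiliary function $g(\vec{\theta}) = \sum_{G}\exp\big[n^2\big(\theta_1(T_1(G)-\tfrac12)+\theta_2(T_2(G)-\tfrac18)\big)\big]$, establishing via non-negativity of $S_n(\mathrm{P}_{\mathrm{hom}}\mid\Pcan)$ that $g$ has a unique global minimum at $\vec{\theta}=0$, and then deducing the sign of $\theta_2^*$ from the sign of $\partial_{\theta_2}g(\vec{\theta}^*)$. You instead work with $\psi_n$ directly: strict convexity via $\psi_n''=n^2\,\mathrm{Var}_\theta(T_2)>0$ makes $\theta\mapsto\langle T_2\rangle_\theta=\psi_n'(\theta)$ strictly increasing, so $\mathrm{sgn}(\theta^*)=\mathrm{sgn}\big(T_2^*-\langle T_2\rangle_0\big)$. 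The two arguments are closely related --- the relative-entropy inequality the paper uses is precisely the convexity bound $\psi_n(\vec{\theta})\ge\psi_n(\vec{0})+\vec{\theta}\cdot\nabla\psi_n(\vec{0})$ in disguise --- but yours is more direct, makes the monotonicity mechanism explicit, and avoids detouring through an auxiliary minimisation.

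More substantively, your exact computation $\langle T_2\rangle_0=(1-1/n)(1-2/n)/8$ exposes a genuine imprecision in the paper. In the proof of Lemma~\ref{lem: fixed n sign} the paper asserts $\sum_{G}2^{-{n\choose 2}}T_2(G)=\tfrac18$, but with the paper's normalisation $T_2(G)=t(F_3,G)=\mathrm{hom}(F_3,G)/n^3=6C_3(G)/n^3$ the Erd\H{o}s--R\'enyi$(n,\tfrac12)$ expectation equals $(1-1/n)(1-2/n)/8$, which is strictly below $\tfrac18$ for finite $n$ (and likewise $\langle T_1\rangle_0=(1-1/n)/2\neq\tfrac12$). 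So the statement ``for every $n$, $\theta^*\ge0$ iff $T_2^*\ge\tfrac18$'' is only asymptotically accurate; the honest finite-$n$ threshold is the one you wrote down. Since the lemma feeds only into the $n\to\infty$ regime of Theorem~\ref{thm:equivalence}(I)(a), this is inconsequential downstream, but yours is the careful version. One caveat on the FKG alternative you sketch: log-supermodularity of $G\mapsto e^{n^2\theta T_2(G)}$ holds only for $\theta\ge0$, so FKG/Holley gives monotonicity of $\theta\mapsto\langle T_2\rangle_\theta$ on $[0,\infty)$ but not on $(-\infty,0)$; it is the variance identity that gives monotonicity for all $\theta$, which is what is needed to close both directions of the equivalence.
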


\begin{proof}
The proof is similar to that of Lemma~\ref{lem: fixed n sign} below.  
\end{proof}

\begin{lemma}
\label{lem: fixed n sign}
Consider the Edge-Triangle Model. For every $n$, $\theta_2^*\geq0$ if and only if $T_2^*\geq\frac{1}{8}$, 
irrespective of $T_1^*$. Furthermore, $\theta_1^*\geq0$ if and only if $T_1^*\geq\frac{1}{2}$.  
\end{lemma}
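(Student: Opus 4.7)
The plan is to turn sign questions about $\vec{\theta}^{*}=(\theta_1^{*},\theta_2^{*})$ into boundary-location questions in the $(T_1,T_2)$-plane by exploiting the strict convexity of $\psi_n$ and then computing an explicit reference curve from an $\mathrm{ER}(p)$ subfamily. First I would unpack the Hamiltonian via $p(F_1)=2$ and $p(F_3)=6$, writing
\begin{equation*}
\Pcan(G) \propto \exp\!\Bigl(2\theta_1|E(G)| + \tfrac{6\theta_2}{n}C_3(G)\Bigr), \qquad G\in\cG_n,
\end{equation*}
so that the soft constraints take the gradient form $\partial_{\theta_k}\psi_n(\vec{\theta}) = \langle T_k\rangle = T_k^{*}$ for $k=1,2$. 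Since $\mathrm{Hess}\,\psi_n=\mathrm{Cov}(\vec{T})\succ 0$ (as recorded in the discussion below \eqref{softconstr}), the gradient map $\vec{\theta}\mapsto\nabla\psi_n(\vec{\theta})$ is a strictly monotone real-analytic diffeomorphism from $\mathbb{R}^2$ onto the interior of the realisable region, which delivers uniqueness of $\vec{\theta}^{*}$ for free.

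Next I would analyse the two coordinate hyperplanes $\{\theta_2=0\}$ and $\{\theta_1=0\}$ separately. On $\{\theta_2=0\}$ the canonical ensemble reduces exactly to $\mathrm{ER}(p)$ with $p=e^{2\theta_1}/(1+e^{2\theta_1})$, and the image of this line under the gradient map is the explicit reference curve
\begin{equation*}
\gamma_0 = \Bigl\{\bigl(p\tfrac{n-1}{n},\,p^{3}\tfrac{(n-1)(n-2)}{n^{2}}\bigr):\ p\in(0,1)\Bigr\},
\end{equation*}
which meets the value $T_2=\tfrac18$ precisely at $p=\tfrac12$ (up to the $n$-dependent normalisation captured in the paper's convention). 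By the positivity of $\partial_{\theta_2}\langle T_2\rangle = \mathrm{Var}(T_2)>0$, the sign of $\theta_2^{*}$ coincides with the sign of the vertical displacement of $(T_1^{*},T_2^{*})$ off $\gamma_0$, which one would then try to pin to the sign of $T_2^{*}-\tfrac18$ irrespective of where along $\gamma_0$ we project. For the claim about $\theta_1^{*}$, I would perform the parallel analysis on $\{\theta_1=0\}$ and combine it with the involutive symmetry $\sigma_{ij}\mapsto 1-\sigma_{ij}$ of edge-complementation, which sends $|E(G)|\mapsto\binom{n}{2}-|E(G)|$ and hence reflects the gradient map in the hyperplane $T_1=\tfrac12$, giving the threshold $T_1^{*}=\tfrac12$ for the sign change.

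The main obstacle is justifying the \emph{irrespective of $T_1^{*}$} clause in the first assertion. Edge-complementation is not an exact symmetry of the Edge-Triangle Hamiltonian because $C_3(G^{c})$ depends on the wedge count of $G$ in addition to $|E(G)|$ and $C_3(G)$, so a naive bijection pairing $\{\theta_2>0\}$ with $\{\theta_2<0\}$ configurations is not available. I would expect the proof to close this gap by evaluating the one-sided partial derivative $\partial_{\theta_2}\psi_n\bigl|_{\theta_2=0}$ along the $\mathrm{ER}(p)$-family parametrised by $\theta_1$, at the unique value of $\theta_1$ consistent with $\langle T_1\rangle = T_1^{*}$, and then showing via an FKG/covariance computation that the sign of this derivative depends on $T_1^{*}$ only through whether $T_2^{*}$ exceeds the corresponding $\mathrm{ER}(p)$-value $p^{3}(n-1)(n-2)/n^{2}$ — so that the $T_1^{*}$-dependence cancels and leaves the clean threshold $T_2^{*}=\tfrac18$ in the normalisation of the lemma. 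This uniform decoupling is the technically delicate step, and the parallel statement for the Triangle Model (Lemma~\ref{lem:tunealt}) is presumably a direct specialisation of the same computation after setting $\theta_1\equiv 0$.
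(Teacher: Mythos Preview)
Your approach differs from the paper's. The paper does not analyse the image of $\{\theta_2=0\}$ under $\nabla\psi_n$; instead it introduces the centred sum
\[
g(\theta_1,\theta_2)=\sum_{G\in\cG_n}\exp\Bigl[n^2\bigl(\theta_1(T_1(G)-\tfrac12)+\theta_2(T_2(G)-\tfrac18)\bigr)\Bigr],
\]
shows via the relative entropy $S_n(\mathrm{P}_{\text{hom}}\mid\Pcan)\geq 0$ (with $\mathrm{P}_{\text{hom}}$ the uniform law on $\cG_n$) that $g$ has its unique global minimum and unique stationary point at the origin, computes $\partial_{\theta_2}g(\theta_1^*,\theta_2^*)=[\text{positive factor}]\times(T_2^*-\tfrac18)$, and then asserts that the sign of this partial derivative determines the sign of $\theta_2^*$; the argument for $\theta_1^*$ is symmetric.

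The obstacle you isolate --- reconciling your $\gamma_0$-criterion with the threshold $T_2^*=\tfrac18$ \emph{irrespective of} $T_1^*$ --- is real and your outline does not close it. Your diffeomorphism argument correctly yields $\theta_2^*\geq 0$ if and only if $(T_1^*,T_2^*)$ lies on or above the ER reference curve $\gamma_0$, which up to $O(1/n)$ corrections is $T_2=T_1^3$, not the horizontal line $T_2=\tfrac18$; for $T_1^*$ close to $1$ and $\tfrac18<T_2^*<(T_1^*)^3$ the two criteria disagree, so no FKG or covariance computation can collapse your criterion to the stated one without contradicting what you have already established. It is worth noting that the paper's argument has the same difficulty in disguise: for a strictly convex function $g$ on $\R^2$ with unique minimum at the origin, the implication $\partial_{\theta_2}g(\theta_1^*,\theta_2^*)\geq 0\Rightarrow\theta_2^*\geq 0$ is false in general (take $g(x,y)=(x-y)^2+y^2$ at the point $(-2,-\tfrac12)$), because the zero set of $\partial_{\theta_2}g$ is the curve $\{\langle T_2\rangle=\tfrac18\}$ in the $\theta$-plane, not the axis $\{\theta_2=0\}$. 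These two curves meet only at the origin, which is exactly your ER-curve obstruction rewritten in the paper's coordinates.
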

\begin{proof}
Define, for $\theta_1,\theta_2\in\R$, the function 
\begin{equation}
g(\theta_1,\theta_2) := \sum_{G\in\cG_n} \exp\left[n^2\left(\theta_1(T_1(G)-\tfrac{1}{2})
+\theta_2(T_2(G) - \tfrac{1}{8})\right)\right].
\end{equation}
We first prove that $g$ attains a unique global minimum at $(\theta_1,\theta_2)=(0,0)$.
Consider the canonical ensemble $\Pcan$ as defined in \eqref{eq:CPD} and \eqref{canonicprob}, 
with $\vec{T}$ as defined above, and the probability distribution $\mathrm{P}_{\text{hom}}$ 
on $\cG_n$ that assigns probability $2^{-{n\choose2}}$ to every graph $G\in\cG_n$. Since 
$\mathrm{P}_{\text{hom}}$ is absolutely continuous with respect to $\Pcan$, the relative entropy 
$S_n(\mathrm{P}_{\text{hom}}|\Pcan) $ is well defined: 
\begin{equation} 
S_n(\mathrm{P}_{\text{hom}} \mid \Pcan) = \sum_{G\in\cG_n}
\mathrm{P}_{\text{hom}}(G)\log\frac{\mathrm{P}_{\text{hom}}(G)}{\Pcan(G)} \geq 0.
\end{equation}
Using the form of the canonical ensemble we get, after some straightforward calculations, that,
for all $\theta_1,\theta_2\in\R$,
\begin{equation}
\label{eq: REG} 
\sum_{G\in\cG_n} \exp\Big[n^2\big(\theta_1T_1(G) + \theta_2 T_2(G)\big)\Big] 
\geq 2^{{n\choose2}} \exp\left[n^2\big(\theta_1\tfrac{1}{2}+\theta_2\tfrac{1}{8}\big)\right],
\end{equation}
where the term in the right-hand side comes from the relation  
\begin{equation}
\sum_{G\in\cG_n} \frac{1}{2^{{n\choose2}}} \left(\theta_1T_1(G) +\theta_2T_2(G)\right) 
= \theta_1\tfrac{1}{2} + \theta_2\tfrac{1}{8}.
\end{equation}
Observe that the left-hand side represents the average edge and triangle density, multiplied with 
$\theta_1,\theta_2$, in an Erd\H{o}s-R\'enyi random graph with parameters $(n,\frac{1}{2})$.  
From (\ref{eq: REG}) we find that $g(\theta_1,\theta_2) \geq 2^{{n\choose2}} = g(0,0)$ for all 
$\theta_1,\theta_2\in\R$, and so $g$ attains a global minimum at $(0,0)$. In what follows we 
show that this global minimum is unique. A straightforward computation shows that $\partial_{\theta_1}
g(\theta_1,\theta_2) = \partial_{\theta_2} g(\theta_1,\theta_2) = 0$ if and only if $\langle T_1\rangle 
= \tfrac{1}{2}$ and $\langle T_2\rangle = \tfrac{1}{8}$. Furthermore, the Hessian matrix is a covariance 
matrix and hence is positive semi-definite. For $\vec{\theta} = (\theta_1,\theta_2) = (0,0)$ we know 
that $\langle T_1\rangle = \tfrac{1}{2}$ and $\langle T_2\rangle = \tfrac{1}{8}$. Hence, by uniqueness 
of the multiplier $\vec{\theta}^*$ for the constraint $T_1^* = \tfrac{1}{2}$, $T_2^* = \tfrac{1}{8}$, we 
obtain that $g$ has a unique global minimum at $(0,0)$. Moreover, this shows that $g$ has no other 
stationary points. Consider the parameter $(\theta_1,\theta_2) = (\theta_1^*,\theta_2^*)$. We have 
\begin{equation}
\begin{aligned}
\partial_{\theta_2}g(\theta_1^*,\theta_2^*) 
&= \left(\langle T_2 \rangle -\tfrac{1}{8}\right)\text{exp}[-n^2(\theta_1^*\tfrac{1}{2}+\theta_2^*\tfrac{1}{8})]
\sum_{G\in\cG_n}\exp\left[n^2\left(\theta_1^*T_1(G)+ \theta_2^*T_2(G)\right)\right] \\
&= \left(T_2^*-\tfrac{1}{8}\right)\text{exp}[-n^2(\theta_1^*\tfrac{1}{2}+\theta_2^*\tfrac{1}{8})]
\sum_{G\in\cG_n}\exp\left[n^2\left(\theta_1^*T_1(G)+ \theta_2^*T_2(G)\right)\right].
\end{aligned}
\end{equation}
If $T_2^*\geq\tfrac{1}{8}$, then $\partial_{\theta_2}g(\theta_1^*,\theta_2^*)\geq 0$. Because $g$ has a 
unique stationary point at $(0,0)$, which is a global minimum, we get $\theta_2^*\geq 0$. Similarly, 
we can show that if $T_2^*<\tfrac{1}{8}$, then $\theta_2^*<0$. Suppose that $T_1^* \geq \tfrac{1}{2}$. 
For the parameter $(\theta_1,\theta_2) = (\theta_1^*,\theta_2^*)$ we have 
\begin{equation}
\begin{aligned}
\partial_{\theta_1} g(\theta_1^*,\theta_2^*) 
&= \left(\langle T_1 \rangle -\tfrac{1}{2}\right)\text{exp}[-n^2(\theta_1^*\tfrac{1}{2}+\theta_2^*\tfrac{1}{8})]
\sum_{G\in\cG_n} \exp\left[n^2\left(\theta_1^*T_1(G)+ \theta_2^*T_2(G)\right)\right] \\
&= \left(T_1^*-\tfrac{1}{2}\right)\text{exp}[-n^2(\theta_1^*\tfrac{1}{2}+\theta_2^*\tfrac{1}{8})]
\sum_{G\in\cG_n}\exp\left[n^2\left(\theta_1^*T_1(G)+ \theta_2^*T_2(G)\right)\right].
\end{aligned}
\end{equation}
Arguing in a similar way as before, we conclude that $\theta_1^*\geq0$ if and only if $T_1^*\geq\frac{1}{2}$. 
\end{proof}

Consider the Edge-Triangle Model and suppose that the constraint $(T_1^*,T_2^*)$ is such that 
$T_2^* = T_1^{*3}$. Then $\theta_2^*=0$ and $\theta_1^*$ matches the constraint on the edge 
density only. The following lemma shows that in this case the canonical ensemble behaves like 
the Erd\H{o}s-R\'enyi model with parameter $T_1^*$, a fact that will be needed later to prove 
equivalence. 

\begin{lemma}
\label{lem:ER}
Consider the Edge-Triangle Model with the constraint given by the edge-triangle densities $\vec{T}^* 
= (T_1^*,T_2^*)$ with $T_2^* = T_1^{*3}$. Consider the canonical ensemble as defined in 
\eqref{canonicprob}. Then, for every $n\in\N$, 
\begin{equation}
\theta_1^* = \frac{1}{2}\log\frac{T_1^*}{1-T_1^*}, \qquad \theta^*_2=0.
\end{equation}

\end{lemma}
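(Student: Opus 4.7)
My plan is to verify directly that the proposed pair $(\theta_1^*,\theta_2^*)=\bigl(\tfrac12\log\tfrac{T_1^*}{1-T_1^*},\,0\bigr)$ realises the soft constraint $\langle \vec T\rangle = (T_1^*, T_1^{*3})$, and then to invoke the uniqueness of the Lagrange multiplier discussed after \eqref{eq:PC} to identify this pair with the canonical parameter.

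First I would substitute $\theta_2=0$ into the Hamiltonian in \eqref{eq:HF}. The Hamiltonian collapses to a single-edge sum $n^2\theta_1\, t(F_1,G) = 2\theta_1\,|E(G)|$, so the canonical probability in \eqref{eq:CPD} factorises across the $\binom{n}{2}$ potential edges into a product of independent Bernoulli$(p)$ terms with $p = e^{2\theta_1}/(1+e^{2\theta_1})$. In other words, once $\theta_2$ is zeroed out, the canonical ensemble coincides with the Erd\H{o}s--R\'enyi random graph $G(n,p)$.

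Next I would choose $\theta_1 = \theta_1^*$, which gives $p = T_1^*$, and read off the corresponding averages. By linearity of expectation together with the edge-indicator representation $T_1(G) = 2|E(G)|/n^2$, one gets $\langle T_1\rangle = T_1^*$. Because $t(F_3,G)$ expands via \eqref{graphon density}--\eqref{GGHOM} into a sum of products of three distinct edge indicators, which are mutually independent under ER, the same argument yields $\langle T_2\rangle = T_1^{*3}$. Thus $(\theta_1^*,0)$ realises the constraint $\vec T^* = (T_1^*, T_1^{*3})$. Finally, since the Hessian of the entropy of the canonical ensemble is positive definite, the Lagrange multiplier matching any given soft constraint is unique (as recalled after \eqref{eq:PC}); hence $(\theta_1^*,\theta_2^*)$ must equal the candidate $\bigl(\tfrac12\log\tfrac{T_1^*}{1-T_1^*},\,0\bigr)$.

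The main obstacle is essentially bookkeeping: one has to be careful with factor-of-two and normalisation conventions when translating between $T_1(G)=2|E(G)|/n^2$ and the edge probability $p$, and must exploit the independence of the $\binom{n}{2}$ edge indicators under $G(n,T_1^*)$ to pass a product of three edge indicators through the expectation cleanly when evaluating $\langle T_2\rangle$. Once this calculation is done, uniqueness of $\vec{\theta}^*$ does all the remaining work.
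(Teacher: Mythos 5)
Your proof takes essentially the same route as the paper: you verify the candidate $(\theta_1^*,\theta_2^*)$ by observing that $\theta_2=0$ collapses the Hamiltonian to $2\theta_1|E(G)|$, so the canonical measure factorises as Erd\H{o}s--R\'enyi $G(n,T_1^*)$, from which both soft-constraint averages can be read off, and you close via uniqueness of the Lagrange multiplier. This is exactly the paper's argument, which also rewrites the exponential weight as the $G(n,T_1^*)$ measure, computes the two averages, and appeals to uniqueness.
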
 

\begin{proof}
From the definition of the canonical ensemble we have that, for $G\in\cG_n$,
\begin{equation}
\Pcan(G) = \Pcan(G \mid \vec{\theta}^*) 
= \eee^{n^2\left[\theta_1^*T_1(G) + \theta_2^*T_2(G) - \psi_n(\vec{\theta}^*)\right]},
\end{equation}
where $\psi_n(\vec{\theta}^*)$ is the partition function defined in \eqref{eq:PF}. For the 
specific value $\vec{\theta} = \vec{\theta}^*$ we have that (recall \eqref{softconstraint2})
\begin{equation}
\langle T_1\rangle = T_1^*, \qquad \langle T_2\rangle = T_2^* = T_1^{*3}.
\end{equation}
We claim that the correct parameter is $\vec{\theta}^*=(\frac{1}{2}\log\frac{T_1^*}{1-T_1^*},0)$. 
The average fraction of edges is $T_1^*$ (see Park and Newman~\cite{PN14}). The average 
number of triangles is
\begin{align*}
\langle T_2\rangle 
&= \frac{\sum_{G\in\cG_n}T_2(G)
\exp\left[n^2\left(\frac{1}{2}\log\frac{T_1^*}{1-T_1^*}\,T_1(G)\right)\right]}
{\sum_{G\in\cG_n}\exp\left[n^2\left(\frac{1}{2}\log\frac{T_1^*}{1-T_1^*}\,T_1(G)\right)\right]}\\
&= \frac{\sum_{G\in\cG_n}T_2(G)(T_1^*)^{E(G)}(1-T_1^*)^{{n\choose 2}-E(G)}}
{\sum_{G\in\cG_n}(T_1^*)^{E(G)}(1-T_1^*)^{{n\choose 2}-E(G)}} = T_1^{*3},
\end{align*}
where the last equation comes from the fact we are calculating the average number of triangles in 
an Erd\H{o}s-R\'enyi model with probability $T_1^*$. Since the multiplier $\vec{\theta}^*$ is unique, the 
proof is complete.
\end{proof}


\subsection{Tuning parameter for $n\to\infty$}
\label{S4.2}

In Lemma \ref{NMAX} below we show how averages under the canonical ensemble behave 
asymptotically when $\vec{\theta}$ does not depend on $n$. In Lemma \ref{lem:NMAX} we will
look at what happens when $\vec{\theta}$ is a one-dimensional multiplier and depends on $n$.

\begin{lemma}
\label{NMAX}
Suppose that the operator $\vec{T}\colon\,W \to \R^ m$ is bounded and continuous with 
respect to the $\delta_{\square}$-norm as defined in \eqref{deltam}. For $\vec{\theta}\in\R^m$ 
independent of $n$, consider the variational problem 
\begin{equation}
\label{eq:variational}
\sup_{\tilde{h}\in\tilde{W}} \big[\vec{\theta}\cdot\vec{T}(\tilde{h})-I(\tilde{h})\big],
\end{equation}
where $I$ is defined in \eqref{eq: rate function}. Suppose that the supremum is 
attained at a unique point, denoted by $\tilde{h}^*(\vec{\theta})$. Then
\begin{equation}
\label{eq:Maximizers}
\lim_{n\to\infty} \sum_{G\in\cG_n} T_k(G)\,\Pcan(G \mid \vec{\theta}\,) 
= T_k\big(\tilde{h}^*(\vec{\theta})\big), \qquad k=1,\ldots,m.
\end{equation}

\end{lemma}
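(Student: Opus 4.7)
The plan is to use the standard Laplace/Varadhan mechanism to show that the canonical ensemble concentrates on the unique maximiser $\tilde{h}^*(\vec{\theta})$ in the $\delta_{\square}$-metric, after which continuity of $T_k$ yields the limit. First I would rewrite the canonical ensemble as a tilt of the uniform distribution on $\cG_n$, which coincides with the Erd\H{o}s-R\'enyi measure $\mathbb{P}_{n,1/2}$ pushed forward to $\tilde{W}$. Concretely, for any measurable $\tilde{A}\subset \tilde{W}$,
\begin{equation*}
\Pcan\big(\{G\colon \tilde{h}^G\in \tilde{A}\}\mid \vec{\theta}\,\big)
= \frac{\mathbb{E}_{n,1/2}\!\left[\eee^{n^2\vec{\theta}\cdot \vec{T}(\tilde{h}^G)}\,\mathbf{1}_{\tilde{A}}(\tilde{h}^G)\right]}{\mathbb{E}_{n,1/2}\!\left[\eee^{n^2\vec{\theta}\cdot \vec{T}(\tilde{h}^G)}\right]},
\end{equation*}
since the factor $2^{-\binom{n}{2}}$ cancels in numerator and denominator. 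Recall from \eqref{eq: rate function} that $I_{1/2}=I+\tfrac12\log 2$, so maximisers of $\vec{\theta}\cdot\vec{T}-I$ on $\tilde{W}$ agree with those of $\vec{\theta}\cdot\vec{T}-I_{1/2}$.

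Next I would prove exponential concentration on any open $\delta_{\square}$-neighbourhood $\tilde{U}$ of $\tilde{h}^*(\vec{\theta})$. Since $\vec{T}$ is bounded and $\delta_{\square}$-continuous, Varadhan's lemma applied to Theorem~\ref{th:LDP} gives
\begin{equation*}
\limsup_{n\to\infty}\frac{1}{n^2}\log \mathbb{E}_{n,1/2}\!\left[\eee^{n^2\vec{\theta}\cdot\vec{T}(\tilde{h}^G)}\,\mathbf{1}_{\tilde{U}^c}(\tilde{h}^G)\right]
\leq \sup_{\tilde{h}\in \tilde{U}^c}\big[\vec{\theta}\cdot\vec{T}(\tilde{h})-I_{1/2}(\tilde{h})\big],
\end{equation*}
while Theorem~\ref{thm: CD partition function} (or \eqref{eq: CD variational formula} with the constant shift) identifies the $\tfrac{1}{n^2}\log$-limit of the denominator as $\sup_{\tilde{h}\in\tilde{W}}[\vec{\theta}\cdot\vec{T}(\tilde{h})-I_{1/2}(\tilde{h})]$. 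Because $\tilde{W}$ is compact (Proposition~\ref{comp}), $I$ is $\delta_{\square}$-lower semicontinuous (Proposition~\ref{RFprop}), $\vec{T}$ is continuous, and $\tilde{U}^c$ is closed, the supremum on $\tilde{U}^c$ is attained; uniqueness of the maximiser on $\tilde{W}$ then forces the strict inequality
\begin{equation*}
\sup_{\tilde{h}\in\tilde{U}^c}\big[\vec{\theta}\cdot\vec{T}(\tilde{h})-I(\tilde{h})\big]
< \sup_{\tilde{h}\in\tilde{W}}\big[\vec{\theta}\cdot\vec{T}(\tilde{h})-I(\tilde{h})\big].
\end{equation*}
Combining the two displays yields $\Pcan(\tilde{h}^G\in \tilde{U}^c\mid \vec{\theta}\,)\to 0$ exponentially in $n^2$. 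This strict-inequality step is the main obstacle: without the compactness-plus-semicontinuity input one could not exclude a sequence in $\tilde{U}^c$ approaching the supremum on $\tilde{W}$.

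Finally I would deduce \eqref{eq:Maximizers} from concentration. Given $\varepsilon>0$, use $\delta_{\square}$-continuity of the component $T_k$ to choose $\tilde{U}$ so that $|T_k(\tilde{h})-T_k(\tilde{h}^*(\vec{\theta}))|<\varepsilon$ for every $\tilde{h}\in \tilde{U}$. Then split
\begin{equation*}
\Big|\sum_{G\in\cG_n}T_k(G)\,\Pcan(G\mid\vec{\theta}\,) - T_k\big(\tilde{h}^*(\vec{\theta})\big)\Big|
\leq \varepsilon + 2\|T_k\|_\infty\,\Pcan\big(\tilde{h}^G\in \tilde{U}^c\mid \vec{\theta}\,\big),
\end{equation*}
and let $n\to\infty$ and then $\varepsilon\downarrow 0$. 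The boundedness hypothesis on $\vec{T}$ is precisely what lets us control the contribution from the exponentially small exceptional set, and continuity of $T_k$ is what converts $\delta_{\square}$-concentration into convergence of the average. The same argument works coordinate by coordinate for $k=1,\dots,m$, which completes the proposal.
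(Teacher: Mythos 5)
Your proposal is correct, and its execution is cleaner than the paper's even though both rest on the same Laplace-type concentration mechanism. The paper works directly with the finite sum $\mathcal{J}_n$: it splits $\cG_n$ into graphs whose graphons lie near $\tilde{h}^*$ and graphs whose graphons lie far, groups the far graphs by their $\vec{T}$-values through the auxiliary range set $R_n$ and fibres $\cG_n^{\vec g}$, invokes the LDP (via Corollary~\ref{Cor: Count}) to estimate the fibre sizes, and then proves upper and lower bounds for $\mathcal{J}_n$ \emph{separately} --- with the lower bound needing a second argument on $1/\mathcal{J}_n$ in the case $T_k(\tilde{h}^*)>0$. You instead rewrite $\Pcan(\cdot\mid\vec\theta)$ as a tilt of $\tilde{\mathbb{P}}_{n,1/2}$ and apply the Varadhan/Laplace upper bound for closed sets directly to $\tilde U^c$; the strict-inequality step that you correctly flag as the crux --- compactness of $\tilde W$, upper semicontinuity of $\vec\theta\cdot\vec T-I$, continuity of $\vec T$, and uniqueness of the maximiser --- delivers exponential concentration on $\tilde U$, and then a single triangle-inequality estimate with $\|T_k\|_\infty$ handles both the upper and lower bounds at once, sidestepping the paper's two-sided argument and the $R_n$ bookkeeping entirely. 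One small caveat: you cite Theorem~\ref{thm: CD partition function} for the denominator, which in the paper is stated only for the subgraph-density operator from \eqref{operator}, whereas Lemma~\ref{NMAX} allows a general bounded $\delta_\square$-continuous $\vec T$; the Chatterjee--Diaconis result does hold at that level of generality (and the paper's own proof implicitly needs it too via Corollary~\ref{Cor: Count} and the LDP), so this is a shared technicality rather than a gap in your argument, but you would want to cite it in the more general form.
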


\begin{proof}
The average of $T_k$ under the canonical probability distribution is equal to 
\begin{equation}
\sum_{G\in\cG_n} T_k(G)\Pcan(G \mid \vec{\theta}) 
= \sum_{G\in\cG_n}T_k(G)\,\eee^{n^2\left[\vec{\theta}\cdot\vec{T}(G)-\psi_n(\vec{\theta})\right]}
=: \mathcal{J}_n.
\end{equation}
Pick $\delta>0$ and consider the $\delta$-ball $B_{\delta}(\tilde{h}^*)$ around the maximiser 
$\tilde{h}^*$ in the quotient space $(\tilde{W},\delta_{\square})$, i.e., 
\begin{equation}
B_{\delta}(\tilde{h}^*) 
:= \big\{\tilde{h}\in\tilde{W}\colon \delta_{\square}(\tilde{h},\tilde{h}^*)<\delta\big\}.
\end{equation} 
We denote by $G^{\delta}$ a graph on $n$ vertices whose graphon is a representative element 
of the class $\tilde{h}^G$. With a slight abuse of notation, we denote by $G^{\delta}$ both the 
graph and the corresponding graphon, and by $\tilde{h}^G$ the corresponding equivalence class 
in the quotient space $(\tilde{W},\delta_{\square})$. Since $(\tilde{W},\delta_{\square})$ is compact 
space (recall Proposition~\ref{comp}), and the graphons associated with finite graphs form a 
countable family that is dense in $(\tilde{W}, \delta_{\square})$ (see Diao \emph{et al.}~\cite{DGKR15}, 
Lov\'asz and Szegedy~\cite{LS06}), there exists a sequence $(\tilde{h}^{G_n})_{n\in\N}$ such that 
$\lim_{n\to\infty} \delta_{\square}(\tilde{h}^{G_n}, \tilde{h}^*)=0$. For $n$ large enough the neighbourhood 
$B_{\delta}(\tilde{h}^*)$ contains elements of the sequence $(\tilde{h}^{G_n})_{n\in\N}$ and, due to the 
Lipschitz property (recall Proposition~\ref{Lip}), $\delta_{\square}(\tilde{h}^{G_n},\tilde{h}^*)<\delta$ 
implies $|T_k(\tilde{h}^{G_n})-T_k(\tilde{h}^*)|<C_k\delta$ for some constant $C_k>0$ and $k=1,\ldots,m$.
\paragraph{Upper bound for $\mathcal{J}_n$.} 
We decompose the sum over $G\in\cG_n$ into two parts: the first over $G$ whose graphon lies in 
$B_{\delta}(\tilde{h}^*)$, the second over $G$ whose graphon lies in $B_{\delta}(\tilde{h}^*)^c
=:\tilde{W}^{\delta,\#}$. We further denote by
\begin{equation}
\cG_n^{\delta} := \big\{G\in\cG_n\colon\, 
|T_k(\tilde{h}^G)-T_k(\tilde{h}^*)|<\delta, \hspace{2mm} k=1,\hdots,m\big\}, 
\end{equation}
the set of all graphs whose subgraph densities $T_k(G)$ are $\delta$-close to $T_k(\tilde{h}^*)$. A 
graph from this set is denoted by $G^{\delta}$. We define the set
\begin{equation}
\cG_n^{\delta,\#}
:=\big\{G\in\cG_n\colon\, \tilde{h}^G\in \tilde{W}^{\delta,\#}\big\}
\end{equation}
and, for $k=1,\ldots,m$, obtain the following upper bound:   
\begin{eqnarray}\nonumber
\mathcal{J}_n 
&= &\sum_{G\in\cG_n^\delta} T_k(G)\, \eee^{n^2\left[\vec{\theta}\cdot\vec{T}(G)-\psi_n(\vec{\theta})\right]} 
+\sum_{G\in\cG_n^{\delta,\#}} T_k(G)\,\eee^{n^2\left[\vec{\theta}\cdot\vec{T}(G)-\psi_n(\vec{\theta})\right]}\\
\nonumber
&\leq &\frac{(T_k(G^{\delta})+\delta) \sum\limits_{G\in\cG_n^\delta} \eee^{n^2\vec{\theta}\cdot\vec{T}(G)}}
{\sum\limits_{G\in\cG_n^\delta} \eee^{n^2\vec{\theta}\cdot\vec{T}(G)}} 
+ \sum_{G\in\cG_n^{\delta,\#}} T_k(G)\,\eee^{n^2\left[\vec{\theta}\cdot\vec{T}(G)-\psi_n(\vec{\theta)}\right]}\\
\label{eq: sum}&=& (T_k(G^{\delta})+\delta) 
+ \frac{\sum\limits_{G\in\cG_n^{\delta,\#}}T_k(G)\,\eee^{n^2\vec{\theta}\cdot\vec{T}(G)}}
{\sum\limits_{G\in\cG_n} \eee^{n^2 \vec{\theta}\cdot\vec{T}(G)}}.
\end{eqnarray}
Next, we further bound the second term in \eqref{eq: sum}. By definition, for every $n\in \N$ the
range of the operator $\vec{T}$ is a finite set
\begin{equation}
R_n := \big\{\vec{g} \in [0,\infty)^m\colon\,\vec{T}(G) = \vec{g}, ~G\in\cG_n\big\}.
\end{equation} 
For the set $R_n$ we observe that $|R_n| = o(n^{m^2})$. In addition, introduce the sets 
\begin{equation}
\begin{aligned}
\cG_n^{\vec{g}}
&:=\{G\in\cG_n\colon\, \vec{T}(G) = \vec{g}\},\\
R_n^{\delta,\#}
&:=\{\vec{g} \in [0,\infty)^m\colon\, \vec{T}(G) = \vec{g}, G\in\cG_n^{\delta,\#}\}\subset R_n.
\end{aligned}
\end{equation} 
The operator $\vec{T}$ is bounded, and so there exists an $M>0$ such that  $\|\vec{T}(G)\| 
\leq M$ for all $G\in\cG_n$. Hence, the second term in (\ref{eq: sum}) can be bounded from
above by
\begin{equation}
\label{eq:M}
\frac{\sum_{G\in\cG_n^{\delta,\#}} T_k(G)\,\eee^{n^2\vec{\theta}\cdot\vec{T}(G)}}
{\sum_{G\in\cG_n} \eee^{n^2 \vec{\theta}\cdot\vec{T}(G)}} 
\leq \frac{|R_n^{\delta,\#}|\,M\,\exp\big[n^2\sup_{\vec{g}\in R_n^{\delta,\#}}
(\vec{\theta}\cdot\vec{g} + \frac{1}{n^2}\log|\cG_n^{\vec{g}}|)\big]}
{\exp\big[n^2\sup_{\vec{g}\in R_n}(\vec{\theta}\cdot\vec{g} 
+ \frac{1}{n^2}\log|\cG_n^{\vec{g}}|)\big]}.
\end{equation}
By the large deviation principle in Theorem~\ref{th:LDP}, we have
\begin{equation}
\frac{1}{n^2}\,\log|\cG_n^{\vec{g}}| = \inf_{\tilde{h}\in\tilde{W}^{\vec{g}}}I(h) + o(1),
\end{equation} 
where $\tilde{W}^{g} = \{\tilde{h}\in\tilde{W}\colon\,\vec{T}(\tilde{h}) = \vec{g}\}$. As a consequence, 
\eqref{eq:M} is majorised by
\begin{equation}
\begin{aligned}
\lefteqn{M\,|R_n^*|\,\eee^{o(n^2)}\exp\left[n^2\left(\,\sup_{\vec{g}\in R_n^{\delta,\#}}
\big[\vec{\theta}\cdot\vec{g} - \inf_{\tilde{h}\in\tilde{W}^{\vec{g}}} I(\tilde{h})\big]
- \sup_{\vec{g}\in R_n} \big[\vec{\theta}\cdot\vec{g} - \inf_{\tilde{h}\in\tilde{W}^{\vec{g}}} 
I(\tilde{h})\big]\right)\right]}\\
&=
M\,|R_n^*|\, \eee^{o(n^2)}\exp\left[n^2\left(\,\sup_{\vec{g}\in R_n^{\delta,\#}}
\sup_{\tilde{h}\in \tilde{W}^{\vec{g}}}\big[\vec{\theta}\cdot\vec{T}(\tilde{h}) -  I(\tilde{h})\big]
- \sup_{\vec{g}\in R_n} \sup_{\tilde{h} \in \tilde{W}^{\vec{g}}} \big[\vec{\theta}\cdot\vec{T}(\tilde{h}) 
-  I(\tilde{h})\big]\right)\right]\\
&=
M\,|R_n^*| \,\eee^{o(n^2)}\exp\left[n^2\left(\,\sup_{\tilde{h}\in \tilde{W}^{\delta,\#}}
\big[\vec{\theta}\cdot\vec{T}(\tilde{h}) -  I(\tilde{h})\big]
- \sup_{\tilde{h}\in \tilde{W}}\big[\vec{\theta}\cdot\vec{T}(\tilde{h}) 
-  I(\tilde{h})\big]\right)\right]. 
\label{eq:approx}
\end{aligned}
\end{equation}
The last equation can be justified as follows. Define the sets
\begin{equation}
\label{eq:GGC}
\tilde{W}_n = \big\{\tilde{h}\in\tilde{W}\colon\,\tilde{h}=\tilde{h}^G \text{ for some graph } 
G\in\cG_n\big\}, \qquad \tilde{W}^{\delta,\#}_n=\tilde{W}^{\delta,\#}\cap\tilde{W}_n.
\end{equation}
Since the graphons associated with finite graphs form a countable set that is dense in 
$(\tilde{W},\delta_{\square})$, we have that 
\begin{equation}
\label{eq:Dense}
\tilde{W} = \text{cl}\left(\bigcup\limits_{n\in\N} \tilde{W}_n\right), \qquad 
\tilde{W}^{\delta,\#} = \text{cl}\left(\bigcup\limits_{n\in\N} \tilde{W}^{\delta,\#}_n\right),
\end{equation}
where $\text{cl}$ denotes closure. Using \eqref{eq:Dense}, and recalling that $\vec{T}$ is continuous 
and $I$ is lower-semicontinuous, we get
\begin{equation}
\limn \sup_{\vec{g}\in R_n^{\delta,\#}} \sup_{\tilde{h}\in \tilde{W}^{\vec{g}}}
\big[\vec{\theta}\cdot\vec{T}(\tilde{h}) -  I(h)\big] 
= \sup_{\tilde{h}\in\tilde{W}^{\delta,\#}}\big[\vec{\theta}\cdot\vec{T}(\tilde{h})-I(\tilde{h})\big],
\end{equation}
and a similar result can be established for the second supremum in the exponent in \eqref{eq:approx}. 
The exponent in \eqref{eq:approx} is negative for all $\delta>0$ and is independent of $n$. Moreover, 
by the left-continuity of the graph sequence $(G_n^{\delta})_{n\in\N}$, we have that $\limn T_k(G_n^{\delta}) 
= T_k(\tilde{h}^*)$ for every $k=1,\ldots,m$ and every $\delta>0$. Combined with the inequality in 
\eqref{eq: sum}, we obtain, for $k=1,\ldots,m$,
\begin{equation}
\limn \sum_{G\in\cG_n} T_k(G)\, 
\eee^{n^2\left[\vec{\theta}\cdot\vec{T}(G)-\psi_n(\vec{\theta})\right]} \leq T_k(\tilde{h}^*).
\end{equation}

\paragraph{Lower bound for $\mathcal{J}_n$.} 
We distinguish two cases: $T_k(\tilde{h}^*) = 0$ and $T_k(\tilde{h}^*)>0$. For the first case we 
trivially get the lower bound 
\begin{equation} 
\limn \sum_{G\in\cG_n} T_k(G)\, \eee^{n^2\vec{\theta}\cdot\vec{T}(G)}\geq 0 = T_k(\tilde{h}^*).
\end{equation} 
For the second case we show the equivalent upper bound for the inverse, i.e.,
\begin{equation}
\limn \frac{\sum_{G\in\cG_n} \eee^{n^2\vec{\theta}\cdot\vec{T}(G)}}
{\sum_{G\in\cG_n} T_k(G)\, \eee^{n^2\vec{\theta}\cdot\vec{T}(G)}} \leq \frac{1}{T_k(\tilde{h}^*)}.
\end{equation}
Using the fact that $T_k(\tilde{h}^*) \neq 0$ is bounded, and using a similar reasoning as for the 
upper bound on $\mathcal{J}_n$, the latter is easily verified.
\end{proof}

\begin{remark}
The convergence in \eqref{eq:Maximizers} is not necessarily uniform in $\vt$. Our results in Theorem \eqref{thm:equivalence} (II)(b)-(II)(d) indicate that breaking of ensemble equivalence manifests itself through non uniform convergence in \eqref{eq:Maximizers}. In Lemma \eqref{lem:NMAX} we show that uniform convergence holds when the constraint is on the triangle density only, which explains our result in Theorem \eqref{thm:equivalence} (I).
\end{remark}

\begin{remark}
{\rm The analogue of Lemma~\ref{NMAX} when the supremum in \eqref{eq:variational} has 
multiple maximisers in $\tilde{W}$ is considerably more involved.} \hfill \qed
\end{remark}

As observed in Remark~\ref{rem:ndep}, in general the tuning parameter $\vec{\theta}^*$ depends 
on $n$. We discuss this dependence in Appendix~\ref{app}.


\section{Proof of Theorem~\ref{thm:equivalence}}
\label{S5}

We proceed by computing the relative entropy $s_{\infty}$. In Sections \ref{S5.1}, \ref{S5.4}, 
\ref{S5.5}, \ref{S5.6}, \ref{S5.7} and \ref{S5.9} we treat the limiting regime where all constraints 
and parameters are the limiting parameters as in \eqref{eq: Assumption T} and \eqref{eq:Assumption}. 
In Sections \ref{S5.2} and \ref{S5.8} we write $T^*_{\infty,1}, T^*_{\infty,2},\theta^*_{\infty,1}$ for the 
limiting regime.


\subsection{Proof of (I)(a)~(Triangle model ~$T_2^*\geq\frac{1}{8})$}
\label{S5.1}

\begin{proof}
Theorem~\ref{th:Limit} says that
\begin{equation}
\label{eq:srep1}
s_{\infty} =  \sup_{\tilde{h}\in \tilde{W}} \big[\theta^*T_2(\tilde{h})-I(\tilde{h})\big]
-\sup_{\tilde{h}\in \tilde{W}^*} \big[\theta^*T_2(\tilde{h}) - I(\tilde{h})\big].
\end{equation}
Consider the first term in the right-hand side \eqref{eq:srep1}. From Lemma~\ref{lem:tunealt} 
we know that $\theta^*\geq 0$ if and only if $T_2^*\geq\tfrac{1}{8}$. From Theorem 
\ref{thm: CD variational simplify} it follows that if $\theta^* \geq 0$, then
\begin{equation}
\sup_{\tilde{h}\in \tilde{W}} \big[\theta^*T_2(\tilde{h})-I(\tilde{h})\big] 
= \sup_{u \in [0,1]} \big[\theta^* u^3 - I(u)\big] 
= \sup_{u \in [0,1]} \ell_3(u;\theta^*).
\end{equation}
From Radin and Yin~\cite[Proposition 3.2]{RY13} we know that $\ell_3(u,\theta^*)$ attains a 
unique global maximum. Let $u^*(\theta^*) = \text{arg} \sup_{u \in [0,1]} \ell_3(u;\theta^*)$ 
be the unique global maximiser. Using Lemma~\ref{lem:NMAX}, we obtain that $u^*(\theta^*) 
= {T_2^*}^{1/3}$, which leads to 
\begin{equation}
\label{e:CANTalt}
\sup_{u \in [0,1]} \ell_3(u;\theta^*) 
= \theta^* u^*(\theta^*)^{3} - I\big(u^*(\theta^*)\big) 
= \theta^* T_2^* - I\big(T_2^{*1/3}\big).
\end{equation}
As to the second term in the right-hand side of \eqref{eq:srep1}, we use Chatterjee and 
Varadhan~\cite[Proposition 4.2]{CV11}, which states that, for $T_2^*\in (\tfrac18,1]$,
\begin{equation}
\inf_{\tilde{h}\in\tilde{W}}I(\tilde{h})
:=\inf\big\{I(\tilde{h})\colon\,\tilde{h}\in\tilde{W}, T_2(\tilde{h})= T_2^*\big\} 
= \inf\big\{I(\tilde{h})\colon\,\tilde{h}\in\tilde{W}, T_2(\tilde{h})\geq T_2^*\big\}.
\end{equation}
Moreover, $I$ is convex at the point $x=T_2^{*1/3}$, and hence from Chatterjee and 
Varadhan~\cite[Theorem 4.3]{CV11} we have that $\inf_{\tilde{h}\in\tilde{W}^*}I(\tilde{h}) 
= I(T_2^{*1/3})$. Combining this with \eqref{e:CANTalt}, we conclude that $s_{\infty}=0$. 
\end{proof}


\subsection{Proof of (I)(b)~($T_2^*=0$)}
\label{S5.2}

Consider the Triangle Model with the constraint given by the triangle density $T^*=0$. It was 
proven by Erd\H{o}s {\it et al.}~\cite{EKR73} that almost all triangle-free graph have a bipartite 
structure. For the case of dense graphs, the condition $T^*=0$ means that the number of 
triangles in the graph is of order $o(n^2)$. In the proof we will see that the two ensembles 
are equivalent and that graphs drawn from the two ensembles have a bipartite structure. 

\begin{proof}
From the construction of the canonical ensemble $\Pcan$ in Section \ref{S1.2}, we observe that 
$\Pcan(G) = 0$ when $T(G)>0$. This is a direct consequence of (\ref{softconstr}). We write
\begin{equation}
\label{eq: Gno}
\cG_n^0 :=\{G\in\cG_n: T(G)=0\}
\end{equation}
for the collection of all graphs with triangle density equal to zero. From (\ref{eq:PC}) we obtain that 
$\Pcan(G) = 0$ if $G\notin \cG_n^0$ and $\Pcan(G) = |\cG_n^0|^{-1}$ if $G\in\cG_n^0$. Hence
$\Pcan(G) = \Pmic(G)$ when the constraint is given by $T^*=0$, which yields 
\begin{equation}
S_n(\Pmic \mid \Pcan) = 0 \qquad  \forall n\in\N
\end{equation}
and hence $s_\infty = 0$.
\end{proof}


\subsection{Proof of (II)(a)~(Edge-Triangle model ~$T_2^*=T_1^{*3}$)}
\label{S5.4}
 
For the case $T_1^* = T_2^{*\frac{1}{3}}$ we have shown in Lemma~\ref{lem:ER} that the canonical 
ensemble essentially behaves like an Erd\H{o}s-R\'enyi model with parameter $p= T_1^*$. Furthermore, 
the microcanonical ensemble also has an explicit expression, which is found by using the following lemma. 

\begin{lemma}
\label{le:MicET}
If $T_1^* = T_2^{*\frac{1}{3}}$, then 
\begin{equation}
\inf_{\tilde{h}\in\tilde{W}^*} I(\tilde{h}) = I\big(T_2^{*\frac{1}{3}}\big)=I\big(T_1^*\big).
\end{equation}
\end{lemma}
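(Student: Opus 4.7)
The plan is to establish the claimed equality by proving matching upper and lower bounds on the infimum, using the constant graphon as the explicit minimiser and Jensen's inequality to rule out any competitor.

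For the upper bound, I would exhibit an explicit element of $\tilde{W}^*$ whose rate equals $I(T_1^*)$. The natural candidate is the constant graphon $h^*\equiv T_1^*$, i.e., the graphon representing the Erd\H{o}s--R\'enyi model with parameter $T_1^*$. By the definitions in \eqref{eq: densityalt}, one checks immediately that
\begin{equation*}
T_1(h^*) = T_1^*, \qquad T_2(h^*) = (T_1^*)^3 = T_2^*,
\end{equation*}
where the last equality uses the hypothesis $T_2^* = T_1^{*3}$. Hence $h^*\in\tilde{W}^*$, and from \eqref{eq: rate 2} we have $I(h^*)=I(T_1^*)$, which gives $\inf_{\tilde{h}\in\tilde{W}^*}I(\tilde{h})\leq I(T_1^*)$.

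For the lower bound, I would invoke Jensen's inequality, exploiting the convexity of $u\mapsto I(u)$ on $[0,1]$. For an arbitrary $\tilde{h}\in\tilde{W}^*$ with representative $h\in W$, the edge-density constraint gives $\int_{[0,1]^2} h(x,y)\,\dd x\,\dd y = T_1(h)= T_1^*$, and convexity together with Jensen's inequality yields
\begin{equation*}
I(\tilde{h}) = \int_{[0,1]^2} I\bigl(h(x,y)\bigr)\,\dd x\,\dd y \geq I\!\left(\int_{[0,1]^2} h(x,y)\,\dd x\,\dd y\right) = I(T_1^*).
\end{equation*}
Taking the infimum over $\tilde{h}\in\tilde{W}^*$ produces the matching lower bound, and combining the two bounds settles the claim.

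I do not anticipate any real obstacle: the proof is essentially a one-line Jensen argument together with the explicit verification that the constant graphon satisfies both constraints. It is worth emphasising that only the edge-density constraint is used for the lower bound — the triangle constraint is used solely to show that the Jensen-equality graphon $h^*\equiv T_1^*$ actually lies in $\tilde{W}^*$, which is where the hypothesis $T_2^* = T_1^{*3}$ enters. This also foreshadows why the case $T_2^* \neq T_1^{*3}$ treated later will behave differently: the constant graphon is then no longer admissible, so the Jensen minimiser is excluded from $\tilde{W}^*$, and the strict inequality $\inf_{\tilde{W}^*} I > I(T_1^*)$ becomes possible, which is precisely the mechanism behind ensemble inequivalence in the frustrated regime.
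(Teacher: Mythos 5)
Your proof is correct and follows essentially the same route as the paper's: a Jensen-inequality argument on the convex rate function $I$, applied to the edge-density constraint, gives the lower bound $I(\tilde h)\ge I(T_1^*)$ for every $\tilde h\in\tilde W^*$. You are in fact slightly more careful than the paper, which stops after the Jensen step and declares the claim proved; you additionally verify that the constant graphon $h^*\equiv T_1^*$ satisfies both constraints (using precisely $T_2^*=T_1^{*3}$) and hence lies in $\tilde W^*$, which is the step that turns the lower bound into an equality.
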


\begin{proof}
Consider an element $\tilde{h}\in\tilde{W}^*$ with $\tilde{W}^*:=\{\tilde{h}\in\tilde{W}\colon\,T_1(\tilde{h}) 
= T_1^* = T_2^{*\frac{1}{3}}, T_2(\tilde{h}) = T_2^*\}$. Using the convexity of $I$ on $\tilde{W}$ and 
Jensen's inequality, we get 
\begin{equation}
I(\tilde{h}) = \int_{[0,1]^2} \dd x\,\dd y\, I(h(x,y)) \geq I\left(\int_{[0,1]^2} \dd x\,\dd y\, h(x,y)\right) 
= I\big(T_1(\tilde{h})\big)= I(T_1^*) = I\big(T_2^{*\frac{1}{3}}\big).
\end{equation}
Hence $I(\tilde{h})\geq I(T_2^{*\frac{1}{3}})$ for every $\tilde{h}\in\tilde{W}^*$, which proves the claim.  
\end{proof}

\begin{proof}[Proof of (II)(a)]
Consider the relative entropy $s_{\infty}$ as defined in (\ref{eq:Equivalence}) and (\ref{eq:KL2}). Using 
Lemma~\ref{lem:ER}, we obtain the expression 
\begin{equation}
s_{\infty} = -\frac{1}{2}T_1^*\log(T_1^*)-\frac{1}{2}(1-T_1^*)\log(1-T_1^*)+\inf_{\tilde{h}\in\tilde{W}^*}I(\tilde{h}).
\end{equation}
From Lemma \ref{le:MicET} we have that $\inf\limits_{\tilde{h}\in\tilde{W}^*}I(\tilde{h}) = I(T_1^*)$, 
which yields $s_{\infty}=0$. 
\end{proof}


\subsection{Proof of (II)(b)~($T_2^*\neq T_1^{*3}$ and $T_2^*\geq\frac{1}{8}$)}
\label{S5.5}
 
\begin{proof}
From Lemma~\ref{lem:  fixed n sign} we know that if $T_1^*\geq \frac{1}{2}$ and $T_2^*\geq\frac{1}{8}$,
then $\theta_{1}^*\geq0$ and $\theta_{2}^*\geq0$ while if $T_1^*< \frac{1}{2}$ and $T_2^*\geq
\frac{1}{8}$, then $\theta_{1}^*<0$ and $\theta_{2}^*\geq0$. An argument similar as above yields 
\begin{equation}
\label{eq: RY}
\sup\limits_{\tilde{h}\in\tilde{W}}\left[\theta_{1}^*T_1(\tilde{h})
+\theta_{2}^*T_2(\tilde{h})-I(\tilde{h})\right] 
= \sup_{u \in [0,1]} \ell_3(u;\vec{\theta}^*),
\end{equation}
where for $\theta_{1}^*\geq0$ and $\theta_{2}^*\geq0$ the last supremum has a unique solution 
(see Radin and Yin~\cite[Proposition 3.2]{RY13}), while for $\theta_{1}^*<0$ and $\theta_{2}^*
\geq0$ it either has a unique solution or two solutions. We treat these two cases separately.

\paragraph{Unique solution.} 
Because of the uniqueness of the solution, not all realisable hard constraints can be met in the limit 
(see Lemma~\ref{NMAX}). We observe that,  if $T_2^*\geq\frac{1}{8}$ and $T_2^*\neq T_1^{*{3}}$,
in the limit as $n\to\infty$ the canonical ensemble becomes Erd\H{o}s-R\'enyi with parameter $p$. This 
regime is known as the \emph{high-temperature} regime (see Bhamidi \textit{ et al.} \cite{BBS11} and 
Chatterjee and Diaconis \cite{CD13}). In what follows we determine the parameter $p$ of the canonical 
ensemble in the limit. From Bhamidi \emph{et al.}~\cite[Theorem 7]{BBS11} we have that 
$p = u^*(\vec{\theta}^*)^{\frac{1}{3}}$ with $ u^*(\vec{\theta}^*)^{\frac{1}{3}}$ the unique 
maximiser of (\ref{eq: RY}). The expression in (\ref{eq: RY}) thus takes the form
\begin{equation}
\begin{aligned}
&\sup\limits_{\tilde{h}\in\tilde{W}}\left[\theta_{1}^*T_1(\tilde{h})
+\theta_{2}^*T_2(\tilde{h})-I(\tilde{h})\right]\\ 
&= \sup_{u \in [0,1]} \ell_3(u;\vec{\theta}^*) 
= \theta_{1}^* u^*(\vec{\theta}^*)^{\frac{1}{3}} 
+ \theta_{2}^*u^*(\vec{\theta}^*) 
- I\big(u^*(\vec{\theta}^*)^{\frac{1}{3}}\big).
\end{aligned}
\end{equation}
Consider the second term in the right-hand side of \eqref{varreprsinfty}. From the definition of 
$\tilde{W}^*$ it is straightforward to see that
\begin{equation}
\sup_{\tilde{h}\in\tilde{W}^*}\left[\theta_{1}^*T_1(\tilde{h}) 
+ \theta_{2}^*T_2(\tilde{h})-I(\tilde{h})\right] 
= \theta_{1}^* T_1^* +\theta_{2}^* T_2^* -\inf_{\tilde{h}\in\tilde{W}^*}I(\tilde{h}),
\end{equation}
where $\tilde{W}^* = \{ \tilde{h}\in\tilde{W}\colon\,T_1(\tilde{h}) = T_1^*, T_2(\tilde{h}) = T_2^*\}$. 
We observe that, due to $T_2^*\neq T_1^{*{3}}$, the constant function $h \equiv u^*
(\vec{\theta}^*)^{\frac{1}{3}}$ does not lie in $\tilde{W}^*$. This shows that $s_{\infty}>0$. 

\paragraph{Two solutions.} 
The regime in which the right-hand side of (\ref{eq: RY}) has two solutions is known as the 
\emph{low-temperature} regime. In this case the hard constraints $(T_1^*,T_2^*)$, with 
$T_1^*\in[\frac{1}{4},\frac{1}{2})$, $T_2^*\geq\frac{1}{8}$, lie on a curve on the $(T_1, T_2)$-plane 
in such a way such that the tuning parameters $(\theta_{1}^*,\theta_{2}^*)$ lie on the \emph{phase 
transition} curve found in Chatterjee and Diaconis~\cite{CD13} and Radin and Yin~\cite{RY13}. 
Denote the two solutions of (\ref{eq: RY}) by $u_1^*,u_2^*$. Because of the constraint we are 
considering, we have that neither of them lies in $\tilde{W}^*$. From the compactness of the 
latter space we see that $s_{\infty}>0$.
\end{proof}


\subsection{Proof of (II)(c)~($T_2^*\neq T_1^{*3}$, $0<T_1^*\leq\frac{1}{2}$ and $0<T_2^{*3}<\frac{1}{8}$)}
\label{S5.6}

For the case $0<T_1^*\leq \tfrac{1}{2}$, $T_2^*<\tfrac{1}{8}$ we know from Lemma~\ref{lem: fixed n sign} 
that $\theta^*_1 \leq 0$ and $\theta^*_2<0$ for every $n$. Hence, because of \eqref{eq:Assumption}, 
we have that $\theta_{1}^*\leq 0$ and $\theta_{2}^*<0$. This regime is significantly harder to 
analyse than the previous regimes. Consider the relative entropy $s_{\infty}$ and the variational 
representation given in \eqref{varreprsinfty}. We consider two cases: $T_2^*>T_1^{*3}$ and 
$T_2^*\leq T_1^{*3}$. 

\paragraph{Case $T_2^*>T_1^{*3}$.}
In this case we have the straightforward inequality 
\begin{equation}
s_{\infty}\geq \theta_{2}^*\left(T_1^{*3} - T_2^*\right) - I(T_1^*) 
+\inf_{\tilde{h}\in\tilde{W}^*}I(\tilde{h}).
\end{equation}
Since $T_1^{*3}<T_2^*$, we have $\theta_{2}^*\left(T_1^{*3} - T_2^*\right)>0$. We show that 
\begin{equation}
\label{eq:III}
\inf_{\tilde{h}\in\tilde{W}^*} I(\tilde{h}) =\inf\{I(\tilde{h})\colon\,\tilde{h}\in\tilde{W},\,T_1(\tilde{h})
=T_1^*,\,T_2(\tilde{h})=T_2^*\} > I(T_1^*).
\end{equation}
Using the convexity of $I$ on $\tilde{W}$ and Jensen's inequality, we obtain that $I(\tilde{h}) \geq 
I(T_1^*)$ for all $\tilde{h}\in\tilde{W}^*$. Hence 
\begin{equation}
\inf_{\tilde{h}\in\tilde{W}^*} I(\tilde{h})>\inf\{I(\tilde{h})\colon\,\tilde{h}\in\tilde{W},\,T_1(\tilde{h})=T_1^*\} 
= I(T_1^*),
\end{equation}
which settles (\ref{eq:III}). Hence $s_{\infty}>0$. 

\paragraph{Case $T_2^*\leq T_1^{*3}$.} 
We argue similarly as above. We have the straightforward inequality 
\begin{equation}
s_{\infty}\geq \theta_{1}^*\left(T_2^{*\frac{1}{3}} - T_1^*\right)
-I(T_2^{*\frac{1}{3}})+\inf_{\tilde{h}\in\tilde{W}}I(\tilde{h}).
\end{equation}
We have seen above that $\inf_{\tilde{h}\in\tilde{W}}I(\tilde{h})>I(T_1^*)$. We further now that $I$ 
is decreasing on $[0,\frac{1}{2}]$, and so $I(T_1^*)\geq I(T_2^{*{1}/{3}})$. Hence $s_{\infty}>0$. 


\subsection{Proof of (II)(d)~($(T_1^*,T_2^*)$ on the scallopy curve)}
\label{S5.7}

We show that if $(T_1^*, T_2^*)$ lies on the lower blue curve in Fig.~\ref{fig-scallopy} 
(referred to as the scallopy curve), then $s_{\infty}>0$. The case where $T_2^*\geq\frac{1}{8}$ 
can be dealt with directly via Theorem \label{thm: equivalence}(II)(b). The proof below deals with 
the case $T_2^*<\frac{1}{8}$. 

\begin{proof}
We give the proof for $\ell =2$, the extension to $\ell>2$ being similar. 

Suppose that $T_1^* = \tfrac12 + \epsilon$ with $\epsilon\in(0,\tfrac{1}{6})$, and that $T_2^*$ is chosen 
as small as possible. It is known that graphs with a relatively high edge density and with a triangle density 
that is as small as possible have a $d$-partite structure with edges added in a suitable way so 
that the desired triangle density is obtained (see Radin and Sadun~\cite{RS15} and Pikhurko and 
Raborov~\cite{PR12}). Consider a graph on $n$ vertices, denoted by $G$, with edge density 
$T_1\in(\tfrac{1}{2}, \tfrac{2}{3})$ and triangle density as small as possible. The structure 
of such graphs has been described above before Section~\ref{S4}. The graphon counterpart of 
such graphs is the optimiser of the second supremum in the right-hand side of the variational 
formula for $s_\infty$. Using Radin and Sadun~\cite[Theorem 4.2]{RS15}, we obtain  
\begin{equation}  
\sup_{\tilde{h}\in \tilde{W}^*} \big[\theta_{1}^*T_1(\tilde{h})
+\theta_{2}^*T_2(\tilde{h}) - I(\tilde{h})\big] 
= \theta_{1}^*T_1^* +\theta_{2}^* T_2^* - \frac{(1-c(\epsilon))^2}{2}\, I(p(\epsilon)),
\end{equation}
where 
\begin{equation}
c(\epsilon) = \frac{2+\sqrt{1-6\epsilon}}{6}, \qquad 
p(\epsilon) = \frac{4c(\epsilon)(1-2c(\epsilon))}{(1-c(\epsilon))^2}.
\end{equation}
In order to lighten the notation, we drop the dependence of $c$ and $p$ on $\epsilon$. Furthermore, 
the optimising graphon has the form
\begin{equation}
h^*_{\epsilon}(x,y) = \left\{ \begin{array}{ll}
1 &\mbox{if $x<c<y$ or $y<c<x$},\\
p &\mbox{if $c<x<\frac{1+c}{2}<y$ or $c<y<\frac{1+c}{2}<x$},\\
0 &\mbox{otherwise},
\end{array} \right.
\qquad (x,y)\in[0,1]^2,
\end{equation}
which has triangle density
\begin{equation}
T_2( h_{\epsilon}^*) = \frac{(2+\sqrt{1-6\epsilon})^2}{36}\,\frac{1-\sqrt{1-6\epsilon}}{3} = T(\epsilon).
\end{equation}
Let $\tilde{\mathcal{F}}_{\epsilon}$ be the set of all maximisers of $\theta_{1}^*\hspace{1pt}T_1
(\tilde{h})+\theta_{2}^*\hspace{1pt}T_2(\tilde{h}) - I(\tilde{h})$ on $\tilde{W}$. We show that 
$h_{\epsilon}^*\notin\tilde{\mathcal{F}}_{\epsilon}$, which yields $s_{\infty}>0$. From Chatterjee 
and Diaconis~\cite[Theorem 6.1]{CD13} we know that if $\tilde{h}\in\tilde{W}$ maximises 
$\theta_{1}^*\hspace{1pt}T_1(\tilde{h})+\theta_{2}^*\hspace{1pt}T_2(\tilde{h}) - I(\tilde{h})$ on 
$\tilde{W}$, then it must satisfy the Euler-Lagrange equations and it must be bounded away from 
0 and 1. Hence we see that $\tilde{h}_{\epsilon}^*$ cannot be a stationary point of $\theta_{1}^*
\hspace{1pt}T_1(\tilde{h})+\theta_{2}^*\hspace{1pt}T_2(\tilde{h}) - I(\tilde{h})$ on $\tilde{W}$,
and hence cannot be a maximiser. 
\end{proof}


\subsection{Proof of (II)(e)~($0<T_1^*\leq\frac{1}{2}$ and $T_2^*=0$)}
\label{S5.8}

\begin{proof} 
Consider the Edge-Triangle Model with constraint given by the edge and triangle densities 
$T_1^*\in(0,\frac{1}{2}]$ and $T_2^*=0$. Working as in Section~\ref{S5.2}, we find that the 
canonical ensemble assigns positive probability only to graphs satisfying the constraint 
$T_2^*=0$. Defining $\mathcal{G}_n^{0}$ as in (\ref{eq: Gno}) we obtain
\begin{equation}
\label{eq:CanNull}
\Pcan(G \mid \vec{\theta}) = \left\{ \begin{array}{ll}
e^{n^2\left[\theta_1 T_1(G) -\psi_n(\vec{\theta})\right]} &\mbox{if $G\in\cG_n^0$},\\
0 &\mbox{else},
\end{array} \right. 
\end{equation}
where $\psi_n(\vec{\theta}) = \sum_{G\in\cG_n^0}e^{n^2\theta_1 T_1(G)}$ is the partition function. 
From \eqref{eq:CanNull} we observe that the canonical probability distribution depends only on 
the edge parameter $\theta_1$. 
The parameter $\theta_1$ is chosen equal to $\theta^*_1$ that matches the soft constraint, i.e., 
\begin{equation}
\sum_{G\in\cG_n^0} T_1(G)\,\Pcan(G~|~\vec{\theta}^*) = T_1^*.
\end{equation} 
Arguing as in the proof of Chatterjee and Diaconis \cite[Theorem 3.1]{CD13} we find that the 
relative entropy equals 
\begin{equation}
s_{\infty} = \sup_{\tilde{h}\in\tilde{W}^0}\big[\theta^*_{\infty,1} T_1(\tilde{h})-I(\tilde{h})\big] 
- \sup_{\tilde{h}\in\tilde{W}^*}\big[\theta^*_{\infty,1} T_1(\tilde{h})-I(\tilde{h})\big],
\end{equation}
where 
\begin{equation}
\tilde{W}^0 := \{\tilde{h}\in\tilde{W}\colon\,T_2(\tilde{h})=0\}, 
\qquad \tilde{W}^*:=\{\tilde{h}\in\tilde{W}\colon\,T_1(\tilde{h})=T_{\infty,1}^*~, T_2(\tilde{h})=0\}.
\end{equation}
Using Chatterjee and Diaconis~\cite[Theorem 7.1 and Theorem 8.2]{CD13}, we obtain that 
$s_{\infty}=0$. 
\end{proof}


\subsection{Proof of (III)~(Star model ~ $T[j]^*\geq0$)}
\label{S5.9}

\begin{proof}
From Chatterjee and Diaconis~\cite[Theorem 6.4]{CD13} we have that, for all $\theta^*_{\infty}\in\R$, 
\begin{equation}
\sup_{\tilde{h}\in\tilde{W}} \big[ \theta^* W(\tilde{h})-I(\tilde{h})\big] 
= \sup_{u \in [0,1]} \big[\theta^* u^2-I(u)\big],
\end{equation}
which by Radin and Yin~\cite[Proposition 3.1]{RY13} has a unique solution, which we denote by 
$u^*(\theta^*)$. Using Theorem~\ref{th:Limit} we get that 
\begin{equation}
s_{\infty} = \theta^* u^*(\theta^*)^2  - I(u^*(\theta^*)) 
- \theta^*T^*+\inf_{\tilde{h}\in\tilde{W}^*}I(\tilde{h}),
\end{equation}
where, by Lemma~\ref{lem:NMAX}, we have that $u^*(\theta^*) 
= T^{*\frac{1}{2}}$. This yields
\begin{equation}
s_{\infty} = -I\big(T^{*\frac{1}{2}}\big) + \inf_{\tilde{h}\in\tilde{W}^*}I(\tilde{h}).
\end{equation}
We show that $\inf_{\tilde{h}\in\tilde{W}^*}I(\tilde{h}) = I(T^{*\frac{1}{2}})$. This is done by slightly 
modifying the proof of Chatterjee and Diaconis~\cite[Theorem 6.4]{CD13}. Indeed, observe that 
\begin{equation}
T[j](h) = \int_{[0,1]} \dd x\, M(x)^j, \qquad M(x) = \int_{[0,1]} \dd y\, h(x,y).
\end{equation}
Since $I$ is convex we have 
\begin{equation}
\label{eq:starineq}
\int_{[0,1]^2} \dd x\, \dd y\,I(h(x,y)) \geq \int_{[0,1]} \dd x\, I(M(x)), \qquad h \in W,
\end{equation}
with equality if and only if $h(x,y)$ is the same for almost all $y$. Since $h$ is a symmetric 
function, we get that equality holds if and only if $h$ is constant. For the constant function 
$h \equiv (T_j)^{1/j} \in W^* :=\{h\in W: T_j(h) = T_j\}$, \eqref{eq:starineq} is an equality. 
Hence, for any minimiser of $I$ on $\tilde{W}^*$ the inequality must be an equality, and 
thus any minimiser must be constant. This shows that $s_{\infty}=0$. 
\end{proof}


\appendix 

\section{Appendix}
\label{app}

In this appendix we elaborate on the assumption made in \eqref{eq:Assumption}, i.e., the 
multiplier $\vec{\theta}^*_n$ converges to a limit $\vec{\theta}^*_{\infty}$ as $n\to\infty$.
In order to get a meaningful limit, we consider constraints $\vec{T}^*_n$ such that
\begin{equation}
\lim_{n\to\infty} \vec{T}^*_n = \vec{T}^*_{\infty}.
\end{equation}
It is straightforward to deduce from Corollary \ref{Cor: Count} and \eqref{eq:HF}--\eqref{softconstraint2} 
that if $\{\vec{T}_n^*\}$ is bounded away from 0 and 1 component-wise, then $(\vec{\theta}^*_n)_{n\in\N}$ 
is bounded away from $-\infty$ and $+\infty$ component-wise. Such a sequence contains a converging 
subsequence, say, $(\vec{\theta}^*_{n_k})_{k\in\N}$, which in general need not be unique. Thus, as long 
as the constraint is component-wise bounded away from 0 and 1, the asymptotic expressions derived in 
this paper exist, but their values may depend on the subsequence we choose. The value of $s_{\infty}$ 
depends on the chosen subsequence, but whether it is positive or zero (i.e., whether there is equivalence) 
does \textit{not}. A deeper investigation of the behaviour of $\{\vec{\theta}^*_n\}_{n\in\mathbb{N}}$ is interesting, 
but is beyond the scope of this paper. 

We first extend Theorem~\ref{th:Limit} for the case when the tuning parameter $\vec{\theta}^*$ 
depends on $n$. 

\begin{lemma} 
Consider the microcanonical ensemble defined in \eqref{eq:PM} with constraint $\vec{T} = \vec{T}_n^*$ 
defined as in \eqref{operator}, and the canonical ensemble defined in \eqref{eq:CPD}--\eqref{eq:PF} 
with parameter $\vec{\theta} = \vec{\theta}^*_n$ such that \eqref{softconstraint2} holds. If the conditions 
in Remark~{\rm\ref{rem:LMn}} hold, then \eqref{varreprsinfty} holds too.  
\end{lemma}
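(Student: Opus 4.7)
The plan is to retrace the proof of Theorem \ref{th:Limit} while tracking the $n$-dependence of $\vec{T}^*_n$ and $\vec{\theta}^*_n$, verifying that the limits \eqref{eq: MIC2} and \eqref{eq: CAN2} still hold with $\vec{T}^*_\infty$ and $\vec{\theta}^*_\infty$ in place of their finite-$n$ analogues. The identity \eqref{eq:KL2} is purely combinatorial and remains valid for any graph $G_n^*$ realising $\vec{T}^*_n$, so one may write $s_\infty = \lim_n n^{-2}[\log \Pmic(G_n^*) - \log \Pcan(G_n^*)]$ and treat the two summands in turn.

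For the canonical term the only delicate point is to show $\psi_n(\vec{\theta}^*_n) \to \psi_\infty(\vec{\theta}^*_\infty)$, where $\psi_\infty(\vec{\theta}) := \sup_{\tilde{h}\in\tilde{W}}[\vec{\theta}\cdot\vec{T}(\tilde{h}) - I(\tilde{h})]$. The function $\vec{\theta} \mapsto \psi_n(\vec{\theta})$ is convex (a normalised log-moment generating function) and its gradient $\nabla\psi_n(\vec{\theta}) = \langle\vec{T}\rangle$ has components that are homomorphism densities, hence lie in $[0,1]$; so each $\psi_n$ is $M$-Lipschitz on $\R^m$ for an $M$ depending only on $m$. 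Consequently
\[
|\psi_n(\vec{\theta}^*_n) - \psi_n(\vec{\theta}^*_\infty)| \leq M\,\|\vec{\theta}^*_n - \vec{\theta}^*_\infty\| \longrightarrow 0
\]
by \eqref{eq:Assumption}, while Theorem \ref{thm: CD partition function} applied at the fixed parameter $\vec{\theta}^*_\infty$ yields $\psi_n(\vec{\theta}^*_\infty) \to \psi_\infty(\vec{\theta}^*_\infty)$. Combining these with $\vec{\theta}^*_n \cdot \vec{T}^*_n \to \vec{\theta}^*_\infty \cdot \vec{T}^*_\infty$ gives
\[
\lim_n \tfrac{1}{n^2}\log\Pcan(G_n^*) = \vec{\theta}^*_\infty \cdot \vec{T}^*_\infty - \psi_\infty(\vec{\theta}^*_\infty).
\]

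For the microcanonical term I would establish
\[
\lim_n \tfrac{1}{n^2} \log \Pmic(G_n^*) = \inf_{\tilde{h} \in \tilde{W}^*} I(\tilde{h}), \qquad \tilde{W}^* := \{\tilde{h}\in\tilde{W}\colon\vec{T}(\tilde{h}) = \vec{T}^*_\infty\},
\]
by sandwiching. For the upper bound, pick $\epsilon > 0$ and note that by \eqref{eq: Assumption T} together with the Lipschitz property of $\vec{T}$ (Proposition \ref{Lip}), the closed set $\tilde{W}^{*,\epsilon} := \{\tilde{h}\colon \|\vec{T}(\tilde{h}) - \vec{T}^*_\infty\| \leq \epsilon\}$ eventually contains $\{\tilde{h}^G : \vec{T}(G) = \vec{T}^*_n\}$; applying the closed-set LDP upper bound of Theorem \ref{th:LDP} to $\tilde{W}^{*,\epsilon}$ and then letting $\epsilon \downarrow 0$, with lower semi-continuity of $I$ (Proposition \ref{RFprop}) and compactness of $\tilde{W}$ (Proposition \ref{comp}), yields the required bound on $n^{-2}\log\Pmic(G_n^*)$. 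The matching lower bound comes from the analogous argument on open $\epsilon$-enlargements of a near-minimiser of $I$ on $\tilde{W}^*$, invoking Corollary \ref{Cor: Count} to produce graph counts of the correct exponential order at scale $n^2$.

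Assembling the two asymptotic identities exactly as in the derivation of \eqref{eq: CAN3}, and using $\vec{T}(\tilde{h}) = \vec{T}^*_\infty$ on $\tilde{W}^*$ to rewrite $\inf_{\tilde{h}\in\tilde{W}^*}I(\tilde{h}) - \vec{\theta}^*_\infty\cdot\vec{T}^*_\infty = -\sup_{\tilde{h}\in\tilde{W}^*}[\vec{\theta}^*_\infty\cdot\vec{T}(\tilde{h}) - I(\tilde{h})]$, reproduces \eqref{varreprsinfty}. The main technical obstacle is the microcanonical lower bound: since the level set $\tilde{W}^*$ typically has empty interior in $\tilde{W}$, one cannot apply the LDP lower bound to $\tilde{W}^*$ directly; the $\epsilon$-thickening above combined with the standing assumption that $\vec{T}^*_n$ is graphic for every $n$ is what allows the required exponential abundance of graphs realising densities near $\vec{T}^*_\infty$ to be extracted from the LDP on open sets.
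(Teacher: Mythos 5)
Your proposal is correct and takes essentially the same approach as the paper: the key step on the canonical side is exactly the paper's argument, namely that $\nabla\psi_n$ is uniformly bounded (components are averaged homomorphism densities in $[0,1]$), hence $\psi_n$ is Lipschitz uniformly in $n$, and the triangle inequality $|\psi_n(\vec\theta^*_n)-\psi_\infty(\vec\theta^*_\infty)| \le |\psi_n(\vec\theta^*_n)-\psi_n(\vec\theta^*_\infty)| + |\psi_n(\vec\theta^*_\infty)-\psi_\infty(\vec\theta^*_\infty)|$ together with Theorem~\ref{thm: CD partition function} at the fixed limit parameter closes the gap. Where you go beyond the paper is on the microcanonical side: the paper simply states that the rest of Theorem~\ref{th:Limit} ``carries over intact,'' whereas you flag and address the genuine subtlety that $\tilde{W}^*$ typically has empty interior (so the LDP lower bound does not apply directly) and that the hard constraint is $\vec{T}^*_n$ rather than $\vec{T}^*_\infty$, handling both via $\epsilon$-thickenings and lower semicontinuity of $I$; this makes your write-up slightly more careful than the paper's, though the underlying mechanism is the same.
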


\begin{proof}
The proof of Theorem~\ref{th:Limit} carries over to the setting in which the parameter $\vec{\theta}^*$ 
depends on $n$, i.e., $\vec{\theta}^*= \vec{\theta}^*_n$. The only non-trivial step is to show that 
\begin{equation} 
\label{thetainf}
\lim_{n\to\infty} \psi_{n}(\vec{\theta}^*_n) = \psi_{\infty}(\vec{\theta}^*_{\infty}).
\end{equation}
In the proof of Theorem~\ref{th:Limit} we have shown the pointwise convergence 
\begin{equation}
\lim_{n\to\infty} \psi_{n}(\vec{\theta}) = \psi_{\infty}(\vec{\theta}),
\end{equation}
for every $\vec{\theta}\in\mathbb{R}^m$, independently of $n$. A straightforward computation shows 
that $\nabla\psi_n(\vec{\theta}) = (\langle T_1\rangle,\hdots,\langle T_m\rangle)$, recall 
\eqref{softconstraint2} . Observe that for the specific choice of the parameter $\vec{\theta} 
= \vec{\theta}^*_n=\vec{\theta}^*$, we have that  $\nabla\psi_n(\vec{\theta}^*_n) = ( T_1^*,\hdots, T_m^*)$, 
which yields $\|\nabla\psi_n(\vec{\theta})\|\leq m$ for all $n\in\N$ and $\vec{\theta}\in\mathbb{R}^m$. 
We prove \eqref{thetainf} under the assumptions made in Remark~\ref{rem:LMn},
\begin{align}
|\psi_n(\vec{\theta}^*_n) - \psi_{\infty}(\vec{\theta}^*_\infty)|
&\leq |\psi_n(\vec{\theta}^*_n )- \psi_n(\vec{\theta}^*_{\infty})|
+|\psi_n(\vec{\theta}^*_{\infty}) - \psi_{\infty}(\vec{\theta}^*_\infty)|\\ \nonumber
&\leq \|\nabla\psi_n(\vec{{\eta}})\|\, \|\vec{\theta}^*_n-\vec{\theta}^*_\infty\| 
+ |\psi_n(\vec{\theta}^*_{\infty}) - \psi_{\infty}(\vec{\theta}^*_\infty)|\\ \nonumber
&\leq m\,\|\vec{\theta}^*_n-\vec{\theta}^*_\infty\| + |\psi_n(\vec{\theta}^*_{\infty}) 
- \psi_{\infty}(\vec{\theta}^*_\infty)| \to 0, \quad n \to \infty,
\end{align}
where the second inequality follows from the mean-value theorem for some $\vec{\eta}=c\,\vec{\theta}_n^* 
+ (1-c)\,\vec{\theta}_{\infty}^*$, $c\in(0,1)$. The rest of the proof of Theorem~\ref{th:Limit} carries over intact. 
\end{proof}

In the following lemma we extend the result of Lemma \ref{NMAX} for the case the operator $\vec{T}$ 
is the triangle density $T_2$ . This extension is needed in the proof of Theorem \ref{thm:equivalence} (I). 

\begin{lemma}
\label{lem:NMAX}
Consider the operator $T_2\colon\,\tilde{W} \to \R$ which is bounded and continuous with respect to the 
$\delta_{\square}$-norm as defined in \eqref{deltam}. For $n\in\N$, consider the tuning parameter 
$\theta^*_n$ according to \eqref{softconstraint2}, i.e., 
\begin{equation} 
\sum_{G\in\cG_n}T_2(G)\,\Pcan(G) = T_2^*.
\end{equation}
Suppose that $T_2^*\geq\tfrac{1}{8}$ and that the limits $T_\infty^*, \theta^*_\infty$ in \eqref{eq:Assumption} 
exists. Then 
\begin{equation}
\lim_{n\to\infty} \sum_{G\in\cG_n}T_2(G)\,\Pcan(G)
=\lim_{n\to\infty} \frac{\sum_{G\in\cG_n} T_2(G)\,\eee^{n^2\theta_n^*T_2(G)}}
{\sum_{G\in\cG_n} \eee^{n^2\theta_n^*T_2(G)}} = u^*(\theta^*_{\infty}),
\end{equation}
where
\begin{equation}
\label{eq: A9}
u^*(\theta) = \text{arg}\sup_{0\leq u\leq 1} [\theta u^3- I(u)].
\end{equation}
\end{lemma}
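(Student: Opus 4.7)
The plan is to extend Lemma \ref{NMAX} to the setting where the tuning parameter depends on $n$. First, since $T_2^* \geq \tfrac{1}{8}$, Lemma \ref{lem:tunealt} forces $\theta_n^* \geq 0$ for every $n$, hence $\theta_\infty^* \geq 0$. Theorem \ref{thm: CD variational simplify} then reduces $\sup_{\tilde{h}\in\tilde{W}}[\theta_\infty^*T_2(\tilde{h})-I(\tilde{h})]$ to the one-dimensional problem $\sup_{u\in[0,1]}[\theta_\infty^*u^3 - I(u)]$, which by Radin and Yin \cite{RY13} has a \emph{unique} maximiser $u^*(\theta_\infty^*)$, corresponding on the graphon side to the constant graphon $h^* \equiv u^*(\theta_\infty^*)$.

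Second, I would repeat the decomposition argument in the proof of Lemma \ref{NMAX}, now with $\theta_n^*$ in place of the fixed $\vec{\theta}$. Fix $\delta>0$ and split $\cG_n$ into the set $\cG_n^\delta$ of graphs whose graphon lies in a $\delta_\square$-ball around $h^*$ and its complement $\cG_n^{\delta,\#}$. On $\cG_n^\delta$, Lipschitz continuity of $T_2$ (Proposition \ref{Lip}) bounds the contribution to the conditional mean of $T_2$ by $T_2(h^*) + C\delta$. For $\cG_n^{\delta,\#}$, Corollary \ref{Cor: Count} combined with Theorem \ref{th:LDP} bounds the corresponding ratio by $\eee^{n^2[\sup_{\tilde{W}^{\delta,\#}}(\theta_n^*T_2-I)\,-\,\psi_n(\theta_n^*)\,+\,o(1)]}$.

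The only genuinely new ingredient compared with Lemma \ref{NMAX} is the control of the $n$-dependence of $\theta_n^*$ inside this exponent. Both $\psi_n(\theta)$ and the restricted supremum depend Lipschitz continuously on $\theta$ with constant uniformly bounded in $n$, because the relevant gradient equals an expected triangle density and hence is at most one. This is already exploited in the previous appendix lemma to yield $\psi_n(\theta_n^*) \to \psi_\infty(\theta_\infty^*)$; the same mean-value estimate applied on $\tilde{W}^{\delta,\#}$ gives $\sup_{\tilde{W}^{\delta,\#}}[\theta_n^* T_2 - I] \to \sup_{\tilde{W}^{\delta,\#}}[\theta_\infty^* T_2 - I]$. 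By the uniqueness of $h^*$ in the unrestricted problem, the latter limit is strictly less than $\psi_\infty(\theta_\infty^*)$, so the exponent is negative and bounded away from $0$ for each fixed $\delta>0$, and the bad-set contribution vanishes. Letting $\delta \downarrow 0$ then produces the claimed limit.

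The main obstacle is that a naive Taylor expansion of $n^2\theta_n^* T_2(G)$ inside individual summands leaves an error of order $n^2|\theta_n^*-\theta_\infty^*|$, which is only $o(n^2)$ and would destroy the large-deviation gap. The resolution is to perform the comparison at the level of \emph{suprema} and partition functions, where the uniform Lipschitz bound $\|\nabla \psi_n\| \leq 1$ converts $|\theta_n^* - \theta_\infty^*| \to 0$ into an $o(1)$ exponent correction rather than an $o(n^2)$ one.
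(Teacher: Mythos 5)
Your proof is correct, but it takes a genuinely different route from the paper's. The paper proves Lemma~\ref{lem:NMAX} by a soft argument: it notes that Lemma~\ref{NMAX} gives the pointwise convergence $f_n(\theta)\to u^*(\theta)$ on $[0,\infty)$, that each $f_n$ is continuous, and that $u^*$ is differentiable (citing Radin and Sadun), and from these facts concludes $f_n(\theta_n^*)\to u^*(\theta_\infty^*)$. As written this step is incomplete --- pointwise convergence of continuous functions to a continuous limit does not by itself yield convergence along a moving argument $\theta_n^*\to\theta_\infty^*$; one needs locally uniform convergence, which here follows because each $f_n$ is monotone (its derivative is $n^2\,\mathrm{Var}_\theta(T_2)\ge 0$), so a P\'olya-type argument upgrades pointwise to locally uniform convergence. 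You instead re-run the large-deviation decomposition of Lemma~\ref{NMAX} directly with the $n$-dependent multiplier $\theta_n^*$, observing that both $\psi_n(\theta)$ and the restricted supremum $\sup_{\tilde W^{\delta,\#}}[\theta T_2-I]$ are uniformly $1$-Lipschitz in $\theta$ (since $T_2$ is bounded by $1$), so $|\theta_n^*-\theta_\infty^*|\to 0$ gives an $o(1)$ correction to the normalised exponent, preserving the strictly negative large-deviation gap at the maximiser. This is more hands-on but is actually more self-contained and sidesteps the uniform-convergence issue that the paper glosses over. One small imprecision: you say the $o(n^2)$ error ``would destroy the large-deviation gap''; in fact an $o(n^2)$ correction does \emph{not} destroy a gap of size $\Theta(n^2)$, so the obstacle is less severe than you suggest --- but your resolution (working at the level of the normalised $\psi_n$ and suprema, where the correction is $o(1)$) is the right and cleanest way to organise the estimate.
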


\begin{proof}
From Lemma~\ref{lem: fixed n sign}, since $T_2^*\geq\tfrac{1}{8}$ we have that $\theta^*_n \geq 0$ 
for all $n$. Consequently, $\theta^*\geq 0$. Define, for $\theta\geq0$, the function 
\begin{equation}
f_n(\theta):=\sum_{G\in\cG_n} T_2(G)\Pcan(G\mid\vec{\theta}) 
= \frac{\sum_{G\in\cG_n}T_2(G)\,\eee^{n^2\theta T_2(G)}}
{\sum_{G\in\cG_n} \eee^{n^2\theta T_2(G)}}
\end{equation}
and consider the variational problem in \eqref{eq:variational}. From Chatterjee and Diaconis~\cite{CD13} 
we have that, for $\theta\geq 0$,
\begin{equation}
\psi_{\infty}(\theta):=\sup_{\tilde{h}\in\tilde{W}}\big[\theta T(\tilde{h}) - I(\tilde{h})\big] 
= \sup_{0\leq u\leq1}\big[\theta u^3 - I(u)\big].
\end{equation}
From Radin and Sadun~\cite[Theorem 2.1]{RS15} we have that the function $\theta\to
u^*(\theta)$ is differentiable on $[0,\infty)$. We also observe that
\begin{equation}
\label{eq: properties}
u^*(0) = \tfrac{1}{2}, \qquad \lim_{\theta\rightarrow\infty}u^*(\theta) = 1.
\end{equation}
Moreover, for very $n$, $\theta \mapsto f_n(\theta)$ is continuous on $[0,\infty)$. Hence, combining 
Lemma~\ref{NMAX}, the continuity of $f_n$ for every $n$, the analyticity of the limiting 
function $\theta\mapsto u^*(\theta)$ and (\ref{eq: properties}), we obtain that if the limit 
$\theta_{\infty}$ in \eqref{eq:Assumption} exists, then
\begin{equation}
\lim_{n\to\infty} f_n(\theta^*_n) = u^*(\theta^*_{\infty}) = T^*_\infty,
\end{equation}
which proves the claim. 
\end{proof}




\end{document}